\numberwithin{equation}{section}
\numberwithin{figure}{section}
\theoremstyle{plain}
\newtheorem{thm}{Theorem}[section]
\newtheorem{lem}[thm]{Lemma}
\newtheorem{claim}[thm]{Claim}
\theoremstyle{remark}
\newtheorem{rmk}[thm]{Remark}
\newcommand{\M}{\operatorname{M}}
\newcommand{\Hf}{\operatorname{H}}
\newcommand{\Od}{\operatorname{O}}
\newcommand{\E}{\operatorname{E}}
\newcommand{\od}{\operatorname{\textbf{o}}}
\newcommand{\e}{\operatorname{\textbf{e}}}
\newcommand{\s}{\operatorname{\textbf{s}}}
\newcommand{\T}{\operatorname{T}}
\newcommand{\V}{\operatorname{V}}
\newcommand{\Pn}{\operatorname{P}}
\newcommand{\Q}{\operatorname{Q}}
\newcommand{\K}{\operatorname{K}}
\begin{document}

\title[Lozenge tilings of a halved hexagon II]{Lozenge tilings of a halved hexagon with an array of triangles removed from the boundary, part II}

\author{Tri Lai}
\address{Department of Mathematics, University of Nebraska -- Lincoln, Lincoln, NE 68588}
\email{tlai3@unl.edu}

\subjclass[2010]{05A15,  05B45}

\keywords{perfect matching, plane partition, lozenge tiling, dual graph,  graphical condensation.}

\date{\today}

\dedicatory{}

\begin{abstract}
Proctor's work on staircase plane partitions yields an enumeration of lozenge tilings of a halved hexagon on the triangular lattice. Rohatgi later extended this tiling enumeration to a halved hexagon with a triangle cut off from the boundary. In the previous paper the author proved  a common generalization of Proctor's and Rohatgi's results  by enumerating lozenge tilings of a halved hexagon in the case an array of an arbitrary number of triangles has been removed from a non-staircase side. In this paper we consider the other case when the array of triangles has been removed from the staircase side of the halved hexagon. Our result also implies an explicit formula for the number of tilings of a hexagon with an array of triangles missing on the symmetry axis.
\end{abstract}

\maketitle
\section{Introduction}\label{Intro}
A \emph{plane partition} is an array of positive integers $p_{i,j}$ (called ``\emph{parts}") so that $p_{i,j}\geq \max(p_{i+1,j},p_{i,j+1})$. R. Proctor \cite{Proc} proved a simple product formula for the number of a class of staircase plane partitions. The plane partitions in Proctor's result are in bijection with lozenge tilings of a hexagon of side-lengths $a,b,c,a,b,c$ (in the  counter clockwise order, starting from the northwestern side)  on the triangular lattice with a `maximal staircase' cut off, denoted by $\mathcal{P}_{a,b,c}$ (see Figure \ref{halfhex6}(a)). When $a=b$, the region $\mathcal{P}_{a,b,c}$ can be viewed as  a half of a symmetry hexagon cut by a zigzag lattice line along the symmetry axis. In this point of view, we usually call the region $\mathcal{P}_{a,b,c}$ a \emph{halved hexagon} (with defects).  Here a \emph{lozenge} (or \emph{unit rhombus}) is the union of two unit equilateral triangles sharing an edge, and a \emph{lozenge tiling} of a region is a covering of the region by lozenges so that there are no gaps or overlaps. This way Proctor's result yields the  following tiling enumeration.

\begin{thm}\label{Proctiling}For any non-negative integers $a,$ $b$, and $c$ with $a\leq b$, we have
\begin{equation}
\M(\mathcal{P}_{a,b,c})=\prod_{i=1}^{a}\left[\prod_{j=1}^{b-a+1}\frac{c+i+j-1}{i+j-1}\prod_{j=b-a+2}^{b-a+i}\frac{2c+i+j-1}{i+j-1}\right],
\end{equation}
where $\M(R)$ denotes the number of lozenge tilings of a region $R$, and where empty products are taken to be 1.
\end{thm}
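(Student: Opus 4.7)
The plan is to reduce the tiling enumeration to Proctor's original theorem on staircase plane partitions. First I would set up the classical three-dimensional interpretation: a lozenge tiling of $\mathcal{P}_{a,b,c}$ becomes a stack of unit cubes in a corner, with heights recorded by a function $(p_{i,j})$. Because the zigzag boundary of $\mathcal{P}_{a,b,c}$ is a maximal staircase, the index set of the height function is itself a staircase shape, and the three parameters $a$, $b$, $c$ translate into row-count, column-count, and maximum-part bounds. After checking that these bounds match Proctor's hypotheses, the tilings are in bijection with the staircase plane partitions enumerated in \cite{Proc}, and his product formula is precisely the stated right-hand side once indices are aligned.

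For a self-contained argument one can instead use non-intersecting lattice paths. After noting the $a$ horizontal lozenges forced along the zigzag side, I would attach a lattice path to each such lozenge with endpoints fixed on the opposite boundary, and invoke the Lindstr\"om--Gessel--Viennot lemma to express $\M(\mathcal{P}_{a,b,c})$ as an $a\times a$ determinant of binomial coefficients. The main obstacle is evaluating this determinant. The factors $(2c+i+j-1)/(i+j-1)$ appearing in the second block of the product (for $j\geq b-a+2$) are the signature of Ciucu's factorization theorem for regions with a zigzag symmetry axis: when $a=b$, $\mathcal{P}_{a,a,c}$ is a fundamental domain of the symmetric hexagon of sides $a,a,c,a,a,c$, and the doubled constant $2c$ records the contribution of the central ``axial'' tilings. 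The determinant must therefore reproduce both the ordinary MacMahon-type factors and these halving factors, which can be carried out either by block row--column operations or by identifying the determinant with a known Cauchy--Binet expression from the symmetric plane partition literature.

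A third option, closest in spirit to the present paper, is an inductive proof via Kuo's graphical condensation. Here one picks four boundary vertices on the dual graph of $\mathcal{P}_{a,b,c}$ so that the four condensation subregions are themselves of the form $\mathcal{P}_{a',b',c'}$ with smaller parameters (after removing any forced lozenges), and then verifies that the proposed product satisfies the resulting recurrence. The base cases $a=0$ and $c=0$ are immediate (the empty product equals $1$, matching the unique tiling); the combinatorial step of choosing the right four vertices is a diagram-drawing exercise; and the remaining work is a routine but lengthy rational-function simplification. This route has the advantage that it generalizes naturally to the perturbed regions studied in the body of the paper.
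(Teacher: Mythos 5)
Your first reduction is exactly how the paper treats this statement: Theorem \ref{Proctiling} is not proved independently there, but is quoted as an immediate consequence of Proctor's staircase plane partition theorem \cite{Proc} via the standard bijection between lozenge tilings of $\mathcal{P}_{a,b,c}$ and plane partitions of staircase shape with entries bounded by $c$, which is precisely the content of your first paragraph. Your two alternative routes (the Lindstr\"om--Gessel--Viennot determinant, whose evaluation you leave open, and a Kuo-condensation induction in the spirit of the paper's proofs of Theorems \ref{main1}--\ref{main8}) are plausible but remain sketches, and they are not needed since the primary reduction already settles the statement.
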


Tiling enumerations of halved hexagons with certain defects have been investigated by a number of authors (see e.g. \cite{Ciucu1}, \cite{Cutoff}, \cite{Ranjan}, \cite{Lai}, \cite{Lai3}).  In particular, the author \cite{Lai, Lai3} proved  a simple product formula for the tiling number of a halved hexagon in which an arbitrary number of triangles have been removed from the base (see the regions in Figure \ref{halfhex3c} and the tiling formulas in Lemma \ref{QAR}). This result has Proctor's theorem above as a special case.

We note that when $a=b$, Proctor's Theorem \ref{Proctiling} implies an exact enumeration for one of the ten symmetry classes of plane partitions, the \emph{transposed-complementary plane partitions} (see e.g.  \cite{Stanley}).

\begin{figure}
  \centering
  \includegraphics[width=10cm]{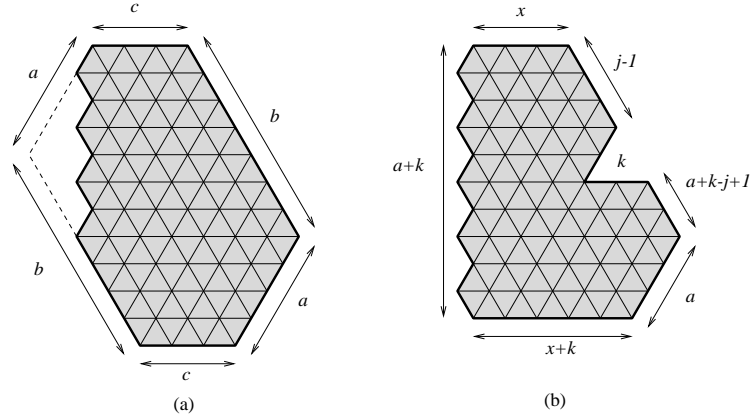}
  \caption{(a) Halved hexagon (with defects) $\mathcal{P}_{4,7,3}$. (b) The region in Rohatgi's paper \cite{Ranjan}.}\label{halfhex6}
\end{figure}

Lozenges in a region can carry weights. In the weighted case, we use the notation $\M(R)$ for the sum of weights of all lozenge tilings of $R$, where the \emph{weight} of a lozenge tiling is the weight product of its constituent lozenges. We still call $\M(R)$ the \emph{(weighted) tiling number} of $R$. We also consider the weighted counterpart $\mathcal{P}'_{a,b,c}$  of $\mathcal{P}_{a,b,c}$,  where all the lozenges along the staircase cut are weighted by $1/2$.  M. Ciucu \cite{Ciucu1} proved the following weighted version of Theorem \ref{Proctiling}.

\begin{thm} For any non-negative integers $a,$ $b$, and $c$ with $a\leq b$
\begin{equation}
\M(\mathcal{P}'_{a,b,c})=2^{-a}\prod_{i=1}\frac{2c+b-a+i}{c+b-a+i}\prod_{i=1}^{a}\left[\prod_{j=1}^{b-a+1}\frac{c+i+j-1}{i+j-1}\prod_{j=b-a+2}^{b-a+i}\frac{2c+i+j-1}{i+j-1}\right].
\end{equation}
\end{thm}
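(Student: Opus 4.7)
The plan is to prove this weighted identity by applying Ciucu's matching factorization theorem to a symmetric region that has $\mathcal{P}'_{a,b,c}$ as one of its halves, and then to evaluate the resulting symmetric region by combining MacMahon's box formula with Proctor's Theorem \ref{Proctiling}.

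First I would construct, for $a\le b$, a symmetric region $R_{a,b,c}$ obtained by gluing $\mathcal{P}_{a,b,c}$ to its mirror image along the staircase side. When $a=b$ this is precisely the semiregular hexagon with sides $a,a,c,a,a,c$, whose tiling number is given by MacMahon's formula. When $a<b$ the doubled region is a hexagon of sides $a,b,c,a,b,c$ with a symmetric pair of notches cut from the top and bottom; removing the resulting strip of forced lozenges reduces the enumeration to a smaller region whose tiling number is obtainable from MacMahon's formula together with Theorem \ref{Proctiling}.

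Next, Ciucu's factorization theorem applied to $R_{a,b,c}$ yields an identity of the form
\[
\M(R_{a,b,c}) \;=\; 2^{k}\,\M(\mathcal{P}'_{a,b,c})^{2},
\]
where $k$ is an explicit integer counting the lozenges crossing the horizontal axis of symmetry. Taking the positive square root and simplifying the resulting ratio of products, one should obtain the stated formula; the extra factor $\prod_i \frac{2c+b-a+i}{c+b-a+i}$ corresponds precisely to those lozenges that lie on the symmetry axis of $R_{a,b,c}$ and thus appear after factorization but not along the staircase of $\mathcal{P}'_{a,b,c}$ itself.

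The principal obstacle is the case $a<b$, where the doubled region is no longer a plain hexagon: one must verify that the factorization indeed produces $\M(\mathcal{P}'_{a,b,c})^{2}$ rather than a product of two distinct weighted regions, and keep careful track of which lozenges receive the weight $1/2$ under the folding. As an alternative route avoiding symmetric doubling, one could expand $\M(\mathcal{P}'_{a,b,c}) = \sum_T 2^{-s(T)}$, where the sum is over unweighted tilings $T$ of $\mathcal{P}_{a,b,c}$ and $s(T)$ is the number of vertical lozenges straddling the staircase cut, and then evaluate this sum by grouping tilings according to the positions of their staircase lozenges and invoking a Vandermonde-type identity; this sidesteps the factorization machinery but demands a finer understanding of the staircase profile in each tiling.
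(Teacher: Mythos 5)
First, a point of reference: the paper itself does not prove this statement; it is quoted from Ciucu \cite{Ciucu1}, and the only proof machinery the paper provides that is relevant to your plan is Ciucu's Factorization Theorem (Lemma \ref{ciucuthm}). Measured against that machinery, your proposal has a genuine gap at its central step. The factorization theorem never produces a perfect square: the edge deletions along the symmetry axis are asymmetric (on the left of $\ell$ one deletes the edges at black $a_i$'s and white $b_j$'s, on the right those at white $a_i$'s and black $b_j$'s), so $G^+$ and $G^-$ are two \emph{non-isomorphic} weighted graphs even when $G$ is mirror-symmetric. One half inherits the weight-$1/2$ edges along the cut (a $\mathcal{P}'$-type region with a ``bumpy'' free boundary), while the other has those edges deleted and is an \emph{unweighted} halved hexagon with shifted side lengths. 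This is exactly the pattern in the paper's own use of the lemma: in Theorem \ref{main9} the symmetric hexagon factors into an $H^{(2)}$-type (unweighted) region times an $H^{(3)}$-type (weighted) region, never into a square. Since $\M(\mathcal{P}'_{a,b,c})/\M(\mathcal{P}_{a,b,c})=2^{-a}\prod_i\frac{2c+b-a+i}{c+b-a+i}\neq 1$, the two factors genuinely differ, your identity $\M(R_{a,b,c})=2^{k}\M(\mathcal{P}'_{a,b,c})^{2}$ is false, and the ``take the positive square root'' step collapses. The repair is the standard derivation (and is essentially how Ciucu proves the theorem): write $\M(\text{hexagon})=2^{k}\,\M(\mathcal{P}'_{a,b,c})\,\M(\mathcal{P}^{*})$ with $\mathcal{P}^{*}$ an ordinary halved hexagon covered by Theorem \ref{Proctiling}, then solve for $\M(\mathcal{P}'_{a,b,c})$ using MacMahon's formula; the odd factors $\frac{2c+b-a+i}{c+b-a+i}$ come out of this division, not from ``lozenges on the axis'' as you assert.

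Two secondary problems. For $a<b$ the staircase cut of $\mathcal{P}_{a,b,c}$ is not a symmetry axis of any plain hexagon, and your doubled region with a symmetric pair of notches is not covered by MacMahon's formula; asserting its enumeration ``from MacMahon together with Theorem \ref{Proctiling}'' presupposes a product formula at least as hard as the one being proved, so even the corrected factorization route needs a different symmetric region (or must handle $a<b$ inside the half-hexagon framework directly, as Ciucu does). Your fallback route --- expanding $\M(\mathcal{P}'_{a,b,c})=\sum_{T}2^{-s(T)}$ and grouping tilings by staircase profile --- is only a sketch: summing over profiles amounts to evaluating a nontrivial determinant or multiple sum, and the unnamed ``Vandermonde-type identity'' is precisely the hard content; as written it cannot be checked. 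Finally, note that in Lemma \ref{ciucuthm} the exponent $k$ counts the \emph{pairs} $(a_i,b_i)$ of axis vertices of the dual graph, i.e.\ half the number of vertices on $\ell$, which is not the same as ``the number of lozenges crossing the horizontal axis of symmetry.''
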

From now on, we use the notations $\Pn_{a,b,c}$ and $\Pn'_{a,b,c}$ for the numbers of tilings of the regions $\mathcal{P}_{a,b,c}$ and $\mathcal{P}'_{a,b,c}$, respectively.

In the previous paper \cite{Lai4}, the author generalized the tiling enumerations of a halved hexagon by Proctor and Rohatgi to a halved hexagon in which an array of \emph{an arbitrary number} of adjacent triangles has been removed from the northeastern side (see Figure \ref{halfhex13} for examples of the regions). In this paper, we investigate the other case when the array of triangles has been removed from the \emph{western} side of the halved hexagon as shown in  Figures \ref{middlehole1},  \ref{middlehole2}, and  \ref{middlehole3}. Based on the positions of the array of removed triangles and the weight assignments of the lozenges along the staircase side, we have \emph{eight} families of defected halved hexagons that will be described in detail in Section \ref{Statement}. We will show that the numbers of tilings of these regions are \emph{all} given by simple product formulas.

Our explicit enumerations for halved hexagons also imply an interesting tiling formula for a symmetric hexagon with an array of triangles missing on the symmetry axis (see Theorem \ref{main9}).

\begin{figure}\centering
\setlength{\unitlength}{3947sp}%
\begingroup\makeatletter\ifx\SetFigFont\undefined%
\gdef\SetFigFont#1#2#3#4#5{%
  \reset@font\fontsize{#1}{#2pt}%
  \fontfamily{#3}\fontseries{#4}\fontshape{#5}%
  \selectfont}%
\fi\endgroup%
\resizebox{12cm}{!}{
\begin{picture}(0,0)%
\includegraphics{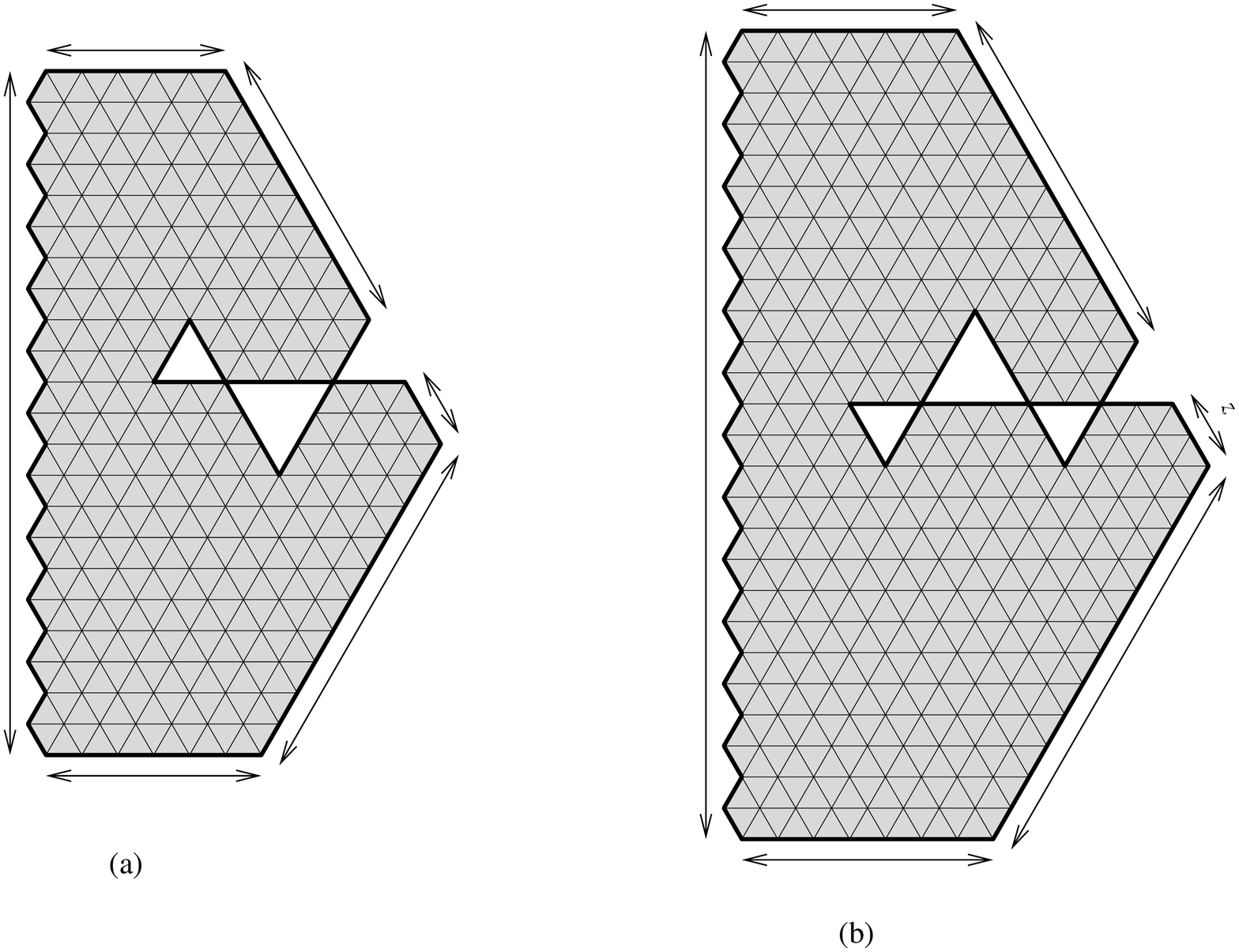}%
\end{picture}%

\begin{picture}(13959,10649)(1761,-9926)
\put(3256, 74){\makebox(0,0)[lb]{\smash{{\SetFigFont{20}{16.8}{\rmdefault}{\mddefault}{\itdefault}{$x+a_2$}%
}}}}
\put(5371,-931){\rotatebox{300.0}{\makebox(0,0)[lb]{\smash{{\SetFigFont{20}{16.8}{\rmdefault}{\mddefault}{\itdefault}{$y+a_1+2a_3$}%
}}}}}
\put(6091,-3541){\makebox(0,0)[lb]{\smash{{\SetFigFont{20}{16.8}{\rmdefault}{\mddefault}{\itdefault}{$a_1$}%
}}}}
\put(5100,-4126){\makebox(0,0)[lb]{\smash{{\SetFigFont{20}{16.8}{\rmdefault}{\mddefault}{\itdefault}{$a_2$}%
}}}}
\put(4000,-3601){\makebox(0,0)[lb]{\smash{{\SetFigFont{20}{16.8}{\rmdefault}{\mddefault}{\itdefault}{$a_3$}%
}}}}
\put(7111,-3751){\rotatebox{300.0}{\makebox(0,0)[lb]{\smash{{\SetFigFont{20}{16.8}{\rmdefault}{\mddefault}{\itdefault}{$z$}%
}}}}}
\put(6001,-7111){\rotatebox{60.0}{\makebox(0,0)[lb]{\smash{{\SetFigFont{20}{16.8}{\rmdefault}{\mddefault}{\itdefault}{$y+z+2a_2$}%
}}}}}
\put(3061,-8416){\makebox(0,0)[lb]{\smash{{\SetFigFont{20}{16.8}{\rmdefault}{\mddefault}{\itdefault}{$x+a_1+a_3$}%
}}}}
\put(1996,-5506){\rotatebox{90.0}{\makebox(0,0)[lb]{\smash{{\SetFigFont{20}{16.8}{\rmdefault}{\mddefault}{\itdefault}{$y+z+a_1+a_2+a_3$}%
}}}}}
\put(9601,-6286){\rotatebox{90.0}{\makebox(0,0)[lb]{\smash{{\SetFigFont{20}{16.8}{\rmdefault}{\mddefault}{\itdefault}{$y+z+a_1+a_2+a_3+a_4$}%
}}}}}
\put(10666,-9271){\makebox(0,0)[lb]{\smash{{\SetFigFont{20}{16.8}{\rmdefault}{\mddefault}{\itdefault}{$x+a_1+a_3$}%
}}}}
\put(13831,-8176){\rotatebox{60.0}{\makebox(0,0)[lb]{\smash{{\SetFigFont{20}{16.8}{\rmdefault}{\mddefault}{\itdefault}{$y+z+2a_2+2a_4$}%
}}}}}
\put(13533,-661){\rotatebox{300.0}{\makebox(0,0)[lb]{\smash{{\SetFigFont{20}{16.8}{\rmdefault}{\mddefault}{\itdefault}{$y+a_1+2a_3$}%
}}}}}
\put(14416,-3766){\makebox(0,0)[lb]{\smash{{\SetFigFont{20}{16.8}{\rmdefault}{\mddefault}{\itdefault}{$a_1$}%
}}}}
\put(13561,-4201){\makebox(0,0)[lb]{\smash{{\SetFigFont{20}{16.8}{\rmdefault}{\mddefault}{\itdefault}{$a_2$}%
}}}}
\put(12600,-3826){\makebox(0,0)[lb]{\smash{{\SetFigFont{20}{16.8}{\rmdefault}{\mddefault}{\itdefault}{$a_3$}%
}}}}
\put(11536,-4186){\makebox(0,0)[lb]{\smash{{\SetFigFont{20}{16.8}{\rmdefault}{\mddefault}{\itdefault}{$a_4$}%
}}}}
\put(10591,434){\makebox(0,0)[lb]{\smash{{\SetFigFont{20}{16.8}{\rmdefault}{\mddefault}{\itdefault}{$x+a_2+a_4$}%
}}}}
\end{picture}}
\caption{Halved hexagons with an array of triangles removed from the non-staircase boundary in \cite{Lai4}.} \label{halfhex13}
\end{figure}

It is worth noticing that Ciucu \cite{Ciucu2}, in his effort on finding duals of the MacMahon classical theorem on plane partitions \cite{Mac}, gave a closed form product formula for the tiling number of a hexagon (not necessarily symmetric) with an array of triangular holes  in the center. In Ciucu's result, if the array of triangular holes moves away from the center, the tiling number is \emph{not} a simple product anymore. In contrast, our result (Theorem \ref{main9} in Section \ref{Statement}) shows that we have  a nice tiling formula for \emph{any} positions of the array of triangular holes along the symmetric axis of the hexagon.

The rest of the paper is organized as follows. Due to the large number of  new regions needed to define,  we leave the precise statement of our main theorems (Theorems \ref{main1}--\ref{main9}) to Section \ref{Statement}. In Section \ref{Background}, we quote several fundamental results in the enumeration of tilings and introduce the particular version of Kuo condensation \cite{Kuo} that we will employ in our proofs. Section \ref{Mainproofs} are devoted to the proofs of our main theorems. 

\section{Precise statement of the main results} \label{Statement}

\begin{figure}\centering
\setlength{\unitlength}{3947sp}%
\begingroup\makeatletter\ifx\SetFigFont\undefined%
\gdef\SetFigFont#1#2#3#4#5{%
  \reset@font\fontsize{#1}{#2pt}%
  \fontfamily{#3}\fontseries{#4}\fontshape{#5}%
  \selectfont}%
\fi\endgroup%
\resizebox{13cm}{!}{
\begin{picture}(0,0)%
\includegraphics{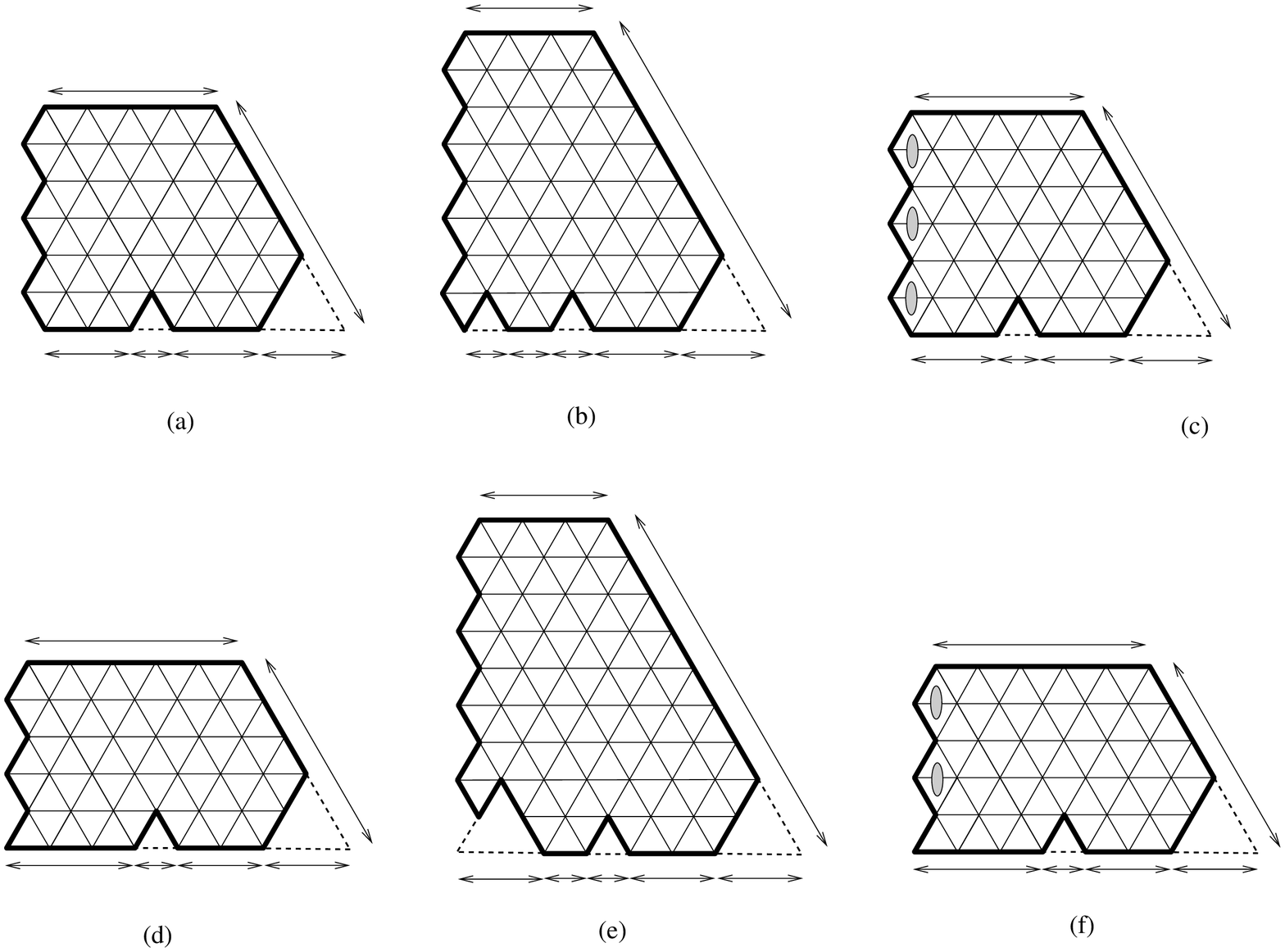}%
\end{picture}
\begin{picture}(11857,8988)(3679,-9536)
\put(4501,-1553){\makebox(0,0)[lb]{\smash{{\SetFigFont{16}{16.8}{\rmdefault}{\mddefault}{\updefault}{$t_1+t_3$}%
}}}}
\put(4148,-4338){\makebox(0,0)[lb]{\smash{{\SetFigFont{16}{16.8}{\rmdefault}{\mddefault}{\updefault}{$t_1$}%
}}}}
\put(4863,-4358){\makebox(0,0)[lb]{\smash{{\SetFigFont{16}{16.8}{\rmdefault}{\mddefault}{\updefault}{$t_2$}%
}}}}
\put(5513,-4368){\makebox(0,0)[lb]{\smash{{\SetFigFont{16}{16.8}{\rmdefault}{\mddefault}{\updefault}{$t_3$}%
}}}}
\put(8020,-4331){\makebox(0,0)[lb]{\smash{{\SetFigFont{16}{16.8}{\rmdefault}{\mddefault}{\updefault}{$t_2$}%
}}}}
\put(8387,-4331){\makebox(0,0)[lb]{\smash{{\SetFigFont{16}{16.8}{\rmdefault}{\mddefault}{\updefault}{$t_3$}%
}}}}
\put(4058,-9001){\makebox(0,0)[lb]{\smash{{\SetFigFont{16}{16.8}{\rmdefault}{\mddefault}{\updefault}{$t_1$}%
}}}}
\put(4938,-9011){\makebox(0,0)[lb]{\smash{{\SetFigFont{16}{16.8}{\rmdefault}{\mddefault}{\updefault}{$t_2$}%
}}}}
\put(5548,-9026){\makebox(0,0)[lb]{\smash{{\SetFigFont{16}{16.8}{\rmdefault}{\mddefault}{\updefault}{$t_3$}%
}}}}
\put(4583,-6566){\makebox(0,0)[lb]{\smash{{\SetFigFont{16}{16.8}{\rmdefault}{\mddefault}{\updefault}{$t_1+t_3$}%
}}}}
\put(6287,-9022){\makebox(0,0)[lb]{\smash{{\SetFigFont{16}{16.8}{\rmdefault}{\mddefault}{\updefault}{$t_4$}%
}}}}
\put(8785,-4331){\makebox(0,0)[lb]{\smash{{\SetFigFont{16}{16.8}{\rmdefault}{\mddefault}{\updefault}{$t_4$}%
}}}}
\put(9295,-4331){\makebox(0,0)[lb]{\smash{{\SetFigFont{16}{16.8}{\rmdefault}{\mddefault}{\updefault}{$t_5$}%
}}}}
\put(10045,-4331){\makebox(0,0)[lb]{\smash{{\SetFigFont{16}{16.8}{\rmdefault}{\mddefault}{\updefault}{$t_6$}%
}}}}
\put(8089,-9145){\makebox(0,0)[lb]{\smash{{\SetFigFont{16}{16.8}{\rmdefault}{\mddefault}{\updefault}{$t_2$}%
}}}}
\put(8681,-9130){\makebox(0,0)[lb]{\smash{{\SetFigFont{16}{16.8}{\rmdefault}{\mddefault}{\updefault}{$t_3$}%
}}}}
\put(9094,-9137){\makebox(0,0)[lb]{\smash{{\SetFigFont{16}{16.8}{\rmdefault}{\mddefault}{\updefault}{$t_4$}%
}}}}
\put(9611,-9130){\makebox(0,0)[lb]{\smash{{\SetFigFont{16}{16.8}{\rmdefault}{\mddefault}{\updefault}{$t_5$}%
}}}}
\put(10406,-9115){\makebox(0,0)[lb]{\smash{{\SetFigFont{16}{16.8}{\rmdefault}{\mddefault}{\updefault}{$t_6$}%
}}}}
\put(6256,-4355){\makebox(0,0)[lb]{\smash{{\SetFigFont{16}{16.8}{\rmdefault}{\mddefault}{\updefault}{$t_4$}%
}}}}
\put(6204,-2123){\rotatebox{300.0}{\makebox(0,0)[lb]{\smash{{\SetFigFont{16}{16.8}{\rmdefault}{\mddefault}{\updefault}{$2t_2+2t_4$}%
}}}}}
\put(6347,-7064){\rotatebox{300.0}{\makebox(0,0)[lb]{\smash{{\SetFigFont{16}{16.8}{\rmdefault}{\mddefault}{\updefault}{$2t_2+2t_4-1$}%
}}}}}
\put(8200,-798){\makebox(0,0)[lb]{\smash{{\SetFigFont{16}{16.8}{\rmdefault}{\mddefault}{\updefault}{$t_3+t_5$}%
}}}}
\put(8228,-5242){\makebox(0,0)[lb]{\smash{{\SetFigFont{16}{16.8}{\rmdefault}{\mddefault}{\updefault}{$t_3+t_5$}%
}}}}
\put(9685,-1481){\rotatebox{300.0}{\makebox(0,0)[lb]{\smash{{\SetFigFont{16}{16.8}{\rmdefault}{\mddefault}{\updefault}{$2t_2+2t_4+2t_6$}%
}}}}}
\put(9881,-6092){\rotatebox{300.0}{\makebox(0,0)[lb]{\smash{{\SetFigFont{16}{16.8}{\rmdefault}{\mddefault}{\updefault}{$2t_2+2t_4+2t_6-1$}%
}}}}}
\put(12392,-1604){\makebox(0,0)[lb]{\smash{{\SetFigFont{16}{16.8}{\rmdefault}{\mddefault}{\updefault}{$t_1+t_3$}%
}}}}
\put(12039,-4389){\makebox(0,0)[lb]{\smash{{\SetFigFont{16}{16.8}{\rmdefault}{\mddefault}{\updefault}{$t_1$}%
}}}}
\put(12754,-4409){\makebox(0,0)[lb]{\smash{{\SetFigFont{16}{16.8}{\rmdefault}{\mddefault}{\updefault}{$t_2$}%
}}}}
\put(13404,-4419){\makebox(0,0)[lb]{\smash{{\SetFigFont{16}{16.8}{\rmdefault}{\mddefault}{\updefault}{$t_3$}%
}}}}
\put(14147,-4406){\makebox(0,0)[lb]{\smash{{\SetFigFont{16}{16.8}{\rmdefault}{\mddefault}{\updefault}{$t_4$}%
}}}}
\put(14095,-2174){\rotatebox{300.0}{\makebox(0,0)[lb]{\smash{{\SetFigFont{16}{16.8}{\rmdefault}{\mddefault}{\updefault}{$2t_2+2t_4$}%
}}}}}
\put(12326,-9036){\makebox(0,0)[lb]{\smash{{\SetFigFont{16}{16.8}{\rmdefault}{\mddefault}{\updefault}{$t_1$}%
}}}}
\put(13206,-9046){\makebox(0,0)[lb]{\smash{{\SetFigFont{16}{16.8}{\rmdefault}{\mddefault}{\updefault}{$t_2$}%
}}}}
\put(13816,-9061){\makebox(0,0)[lb]{\smash{{\SetFigFont{16}{16.8}{\rmdefault}{\mddefault}{\updefault}{$t_3$}%
}}}}
\put(12851,-6601){\makebox(0,0)[lb]{\smash{{\SetFigFont{16}{16.8}{\rmdefault}{\mddefault}{\updefault}{$t_1+t_3$}%
}}}}
\put(14555,-9057){\makebox(0,0)[lb]{\smash{{\SetFigFont{16}{16.8}{\rmdefault}{\mddefault}{\updefault}{$t_4$}%
}}}}
\put(14615,-7099){\rotatebox{300.0}{\makebox(0,0)[lb]{\smash{{\SetFigFont{16}{16.8}{\rmdefault}{\mddefault}{\updefault}{$2t_2+2t_4-1$}%
}}}}}
\end{picture}}
\caption{ (a) The region $\mathcal{Q}(2,1,2,2)$. (b) The region $\mathcal{Q}(0,1,1,1,2,2)$.  (c) The region $\mathcal{Q}'(2,1,2,2)$. (d) The region $\mathcal{K}(3,1,2,2)$. (e) The region $\mathcal{K}(0,2,1,1,2,2)$. (f) The region $\mathcal{K}'(3,1,2,2)$. The lozenges with shaded cores are weighted by $\frac{1}{2}$. This figure first appeared in \cite{Lai4}.}\label{halfhex3c}
\end{figure}

We define the \emph{Pochhammer symbol} $(x)_n$ by
\begin{equation}
(x)_n=\begin{cases}
x(x+1)\dotsc(x+n-1) & \text{if $n>0$;}\\
1 &\text{if $n=0$;}\\
\frac{1}{(x-1)(x-2)\dotsc(x+n)}&\text{if $n<0$.}
\end{cases}
\end{equation}
We also use the `skipping' version of the Pochhammer symbol:
\begin{equation}
[x]_n=\begin{cases}
x(x+2)\dotsc(x+2(n-1)) & \text{if $n>0$;}\\
1 &\text{if $n=0$;}\\
\frac{1}{(x-2)(x-4)\dotsc(x+2n)}&\text{if $n<0$.}
\end{cases}
\end{equation}
We define two `trapezoidal' products as follows:
\begin{equation}
\T(x,n,m)=\prod_{i=0}^{m-1}(x+i)_{n-2i}
\end{equation}
and
\begin{equation}
V(x,n,m)=\prod_{i=0}^{m-1}[x+2i]_{n-2i}.
\end{equation}
Let $\textbf{a}=(a_1,a_2,\dotsc,a_n)$ be a sequence. We define several sequence operations as follows:
\begin{equation}
\Od(\textbf{a})=\sum_{\text{ $i$ odd}}a_i,
\end{equation}
\begin{equation}
\E(\textbf{a})=\sum_{\text{ $i$ even}}a_i,
\end{equation}
\begin{equation}
\s_k(\textbf{a})=\sum_{i=1}^{k}a_i,
\end{equation}
\begin{equation}
\od_k(\textbf{a})=\sum_{i\geq k}a_{2i-1},
\end{equation}
and
\begin{equation}
\e_k(\textbf{a})=\sum_{i\geq k}a_{2i}.
\end{equation}

Before going to the statement of our main results we quote here the tiling formulas of halved hexagons with triangles removed on the base (see Lemma  1.4 in  \cite{Lai4}).

Assume that $\textbf{t}=(t_1,t_2,\dots,t_{2l})$ is a sequence of non-negative integers. Consider a trapezoidal region whose northern, northeastern, and southern sides have lengths $\Od(\textbf{t}),$ $ 2\E(\textbf{t}),$ and $\E(\textbf{t})+\Od(\textbf{t})$, respectively, and whose western side follows a vertical zigzag lattice paths with $\E(\textbf{t})$ steps.  We remove the triangles of sides $t_{2i}$'s from the base of the latter region  so that the distances between two consecutive triangles are $t_{2i-1}$'s. Denote the resulting region by $\mathcal{Q}(\textbf{t})=\mathcal{Q}(t_1,t_2,\dotsc,t_{2l})$ (see the regions in Figure \ref{halfhex3c}(a)  for the case when $t_{1}>0$ and Figure \ref{halfhex3c}(b) for the  case when $t_{1}=0$).   Inspired by the weighted region $\mathcal{P}_{a,b,c}$, we consider the weighted counterpart  $\mathcal{Q}'(\textbf{t})$ of the newly defined region, where the vertical lozenges on the western side are weighted by $\frac{1}{2}$ (see Figure \ref{halfhex3c}(c); the vertical lozenges with shaded cores are weighted by $\frac{1}{2}$).

We are also interested in a variation of the above $\mathcal{Q}$-type regions as follows. Consider the  trapezoidal region whose northern, northeastern, and southern sides have lengths $\Od(\textbf{t}), 2\E(\textbf{t})-1, \E(\textbf{t})+\Od(\textbf{t}),$ respectively, and whose western side follows the vertical zigzag lattice path with $\E(\textbf{t})-\frac{1}{2}$ steps (i.e. the western side has $\E(\textbf{t})-1$ and a half `bumps'). Next, we  also remove the triangles of sides $t_{2i}$'s from the base so that the distances between two consecutive ones are $t_{2i-1}$'s. Denote by $\mathcal{K}(\textbf{t})=\mathcal{K}(t_1,t_2,\dotsc,t_{2l})$ the resulting regions (see the regions in Figure \ref{halfhex3c}(d)  for the case when $t_1>0$ and Figure \ref{halfhex3c}(e) for the  case when $t_1=0$). Similar to the case of $\mathcal{Q}'$-type regions, we also define a weighted version $\mathcal{K}'(\textbf{t})$ of the $\mathcal{K}(\textbf{t})$  by assigning to each vertical lozenge on its western side a weight $\frac{1}{2}$ (see Figure \ref{halfhex3c}(f)).

We adopt here the notations $\Q(\textbf{t})$, $\Q'(\textbf{t})$, $\K(\textbf{t})$, and  $\K'(\textbf{t})$ from \cite{Lai4} for the numbers of tilings of the regions $\mathcal{Q}(\textbf{t})$, $\mathcal{Q}'(\textbf{t})$, $\mathcal{K}(\textbf{t})$, and  $\mathcal{K}'(\textbf{t})$, respectively.

We define the  \emph{hyperfactorial} $\Hf(n)$ by
\begin{equation*}\Hf(n):=0!\cdot1!\cdot2!\dotsc(n-1)!,
\end{equation*}
 and the \emph{`skipping' hyperfactorial } $\Hf_2(n)$ by
\begin{equation*}
\Hf_2(n)=
\begin{cases}
 0!\cdot2!\cdot4!\dots(n-2)! &\text{if $n$ is even;}\\
1!\cdot2!\cdot 3!\dots(n-2)! &\text{if $n$ is odd.}
 \end{cases}
 \end{equation*}

\begin{lem}[Lemma 1.4 in \cite{Lai4}]\label{QAR}
For any sequence of non-negative integers $\textbf{t}=(t_1,t_2,\dotsc,t_{2l})$
\begin{align}\label{QARa}
\Q(\textbf{t})&=\dfrac{\prod_{i=1}^{l}\frac{(\s_{2i}(\textbf{t}))!}{(\s_{2i-1}(\textbf{t}))!}}{\Hf_2(2\E(\textbf{t})+1)} \prod_{i=1}^{l}\frac{\Hf_2(2\s_{2i}(\textbf{t})+1)\Hf(2\s_{2i-1}(\textbf{t})+2)}{\Hf_2(2\s_{2i-1}(\textbf{t})+3)}\notag\\
                  &\times \displaystyle {\prod_{\substack{1\leq i< j\leq 2l\\
                  \text{$j-i$ odd}}}}\dfrac{\Hf(\s_j(\textbf{t})-\s_{i}(\textbf{t}))}{\Hf(\s_j(\textbf{t})+\s_{i}(\textbf{t})+1)}\displaystyle {\prod_{\substack{1\leq i<j\leq 2l\\
                  \text{$j-i$ even }}}}\dfrac{\Hf(\s_j(\textbf{t})+\s_{i}(\textbf{t})+1)}{\Hf(\s_j(\textbf{t})-\s_{i}(\textbf{t}))},
\end{align}
\begin{align}\label{QARb}
\Q'(\textbf{t})&=\dfrac{2^{-\E(\textbf{t})}}{\Hf_2(2\E(\textbf{t})+1)}  \prod_{i=1}^{l}\frac{\Hf_2(2\s_{2i}(\textbf{t})+1)\Hf(2\s_{2i-1}(\textbf{t}))}{\Hf_2(2\s_{2i-1}(\textbf{t})+1)}\notag\\
                  & \times \displaystyle {\prod_{\substack{1\leq i< j\leq 2l\\
                  \text{$j-i$ odd}}}}\dfrac{\Hf(\s_j(\textbf{t})-\s_{i}(\textbf{t}))}{\Hf(\s_j(\textbf{t})+\s_{i}(\textbf{t}))}\displaystyle {\prod_{\substack{1\leq i<j\leq 2l\\
                  \text{$j-i$ even }}}}\dfrac{\Hf(\s_j(\textbf{t})+\s_{i}(\textbf{t}))}{\Hf(\s_j(\textbf{t})-\s_{i}(\textbf{t}))},
\end{align}
\begin{align}\label{QARc}
\K(\textbf{t})&=\dfrac{1}{\Hf_2(2\E(\textbf{t}))}   \prod_{i=1}^{l}\frac{\Hf_2(2\s_{2i}(\textbf{t}))\Hf(2\s_{2i-1}(\textbf{t})+1)}{\Hf_2(2\s_{2i-1}(\textbf{t})+2)} \notag\\
                  &\times   \displaystyle {\prod_{\substack{1\leq i< j\leq 2l\\
                  \text{$j-i$ odd}}}}\dfrac{\Hf(\s_j(\textbf{t})-\s_{i}(\textbf{t}))}{\Hf(\s_j(\textbf{t})+\s_{i}(\textbf{t}))}\displaystyle {\prod_{\substack{1\leq i<j\leq 2l\\
                  \text{$j-i$ even }}}}\dfrac{\Hf(\s_j(\textbf{t})+\s_{i}(\textbf{t}))}{\Hf(\s_j(\textbf{t})-\s_{i}(\textbf{t}))},
\end{align}
and
\begin{align}\label{QARd}
\K'(\textbf{t})&=\dfrac{1}{\Hf_2(2\E(\textbf{t}))} \prod_{i=1}^{l}\frac{\Hf_2(2\s_{2i}(\textbf{t})-1)\Hf(2\s_{2i-1}(\textbf{t}))}{\Hf_2(2\s_{2i-1}(\textbf{t})+1)}\notag\\
                  &\times   \displaystyle {\prod_{\substack{1\leq i< j\leq 2l\\
                  \text{$j-i$ odd}}}}\dfrac{\Hf(\s_j(\textbf{t})-\s_{i}(\textbf{t}))}{\Hf(\s_j(\textbf{t})+\s_{i}(\textbf{t})-1)}\displaystyle {\prod_{\substack{1\leq i<j\leq 2l\\
                  \text{$j-i$ even }}}}\dfrac{\Hf(\s_j(\textbf{t})+\s_{i}(\textbf{t})-1)}{\Hf(\s_j(\textbf{t})-\s_{i}(\textbf{t}))}.
\end{align}
\end{lem}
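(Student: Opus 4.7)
The plan is to establish the four identities (\ref{QARa})--(\ref{QARd}) simultaneously by induction on the total parameter $N:=\E(\textbf{t})+\Od(\textbf{t})$. The four families $\mathcal{Q}(\textbf{t})$, $\mathcal{Q}'(\textbf{t})$, $\mathcal{K}(\textbf{t})$, $\mathcal{K}'(\textbf{t})$ are closely intertwined: the $\mathcal{K}$-type regions are obtained from the $\mathcal{Q}$-type ones by shaving off one half-row from the northeastern side, while the primed variants re-weight the vertical lozenges along the zigzag western boundary by $\frac{1}{2}$. This strongly suggests treating all four as a single coupled family rather than handling them one at a time, because any Kuo-type recurrence applied to a $\mathcal{Q}$-region will spill out subregions of $\mathcal{K}$-type (and vice versa), and forced-lozenge arguments naturally produce the $1/2$ weights.

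For the base cases, when $l=0$ the region reduces to a plain trapezoid whose tiling number is counted by a standard MacMahon-type product; when $l=1$ the region carries a single triangular dent and one can reduce directly to a halved hexagon with a single cut on the staircase side (Rohatgi's case) or specialize further to Proctor's Theorem \ref{Proctiling}. The degenerate instance $t_1=0$, in which the leftmost triangle is flush with the zigzag side (Figure \ref{halfhex3c}(b),(e)), must be checked separately, but amounts to verifying that the product formulas are consistent with the convention $\s_0(\textbf{t})=0$.

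For the inductive step I would apply the variant of Kuo graphical condensation highlighted in Section \ref{Background} to the planar dual graph of a $\mathcal{Q}$- or $\mathcal{K}$-region. The key is to choose four boundary vertices---typically two on the zigzag western side and two on the southern base---so that the six subregions appearing in the condensation identity $\M(G)\M(G_1)=\M(G_2)\M(G_3)+\M(G_4)\M(G_5)$ are themselves of type $\mathcal{Q},\mathcal{Q}',\mathcal{K},\mathcal{K}'$ with parameters obtained by decrementing a single entry $t_i$ (or an adjacent pair) by $1$. The induction then reduces the lemma to an explicit algebraic identity among hyperfactorial products; once the parity-sensitive factors $\Hf_2(\,\cdot\,)$ are tracked, this verification is a routine telescoping. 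An alternative that I would keep in reserve is to invoke Ciucu's factorization theorem on an appropriate symmetric hexagonal cover of $\mathcal{Q}'(\textbf{t})$, which would reduce the primed identities (\ref{QARb}),(\ref{QARd}) to the unprimed ones (\ref{QARa}),(\ref{QARc}) and cut the bookkeeping roughly in half.

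The hardest part will be selecting condensation vertices that send the six subregions back into the $\mathcal{Q}/\mathcal{Q}'/\mathcal{K}/\mathcal{K}'$ framework; most generic choices yield subregions outside the family, collapsing the recurrence. A secondary obstacle is the asymmetry between odd- and even-indexed entries of $\textbf{t}$ (an entry $t_{2i}$ is a removed-triangle size while $t_{2i-1}$ is a gap), which in general forces two separate condensation arguments depending on which $t_i$ is being decremented, and a further pair to couple the primed and unprimed versions. Handling the boundary case $t_1=0$ cleanly inside the induction, without treating it as a separate sub-induction, is the final bookkeeping subtlety.
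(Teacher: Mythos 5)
You should first note what the paper actually does with this statement: it does not prove Lemma \ref{QAR} at all, but imports it verbatim as Lemma 1.4 of \cite{Lai4} (the enumerations going back to the author's earlier work \cite{Lai,Lai3}), so there is no internal proof to measure your argument against, and any correct proof here is necessarily a reconstruction of the prior paper's. Your overall strategy --- a coupled induction on all four families $\Q,\Q',\K,\K'$ driven by Kuo condensation (Theorem \ref{Kuothm}) with degenerate base cases --- is indeed the method of that prior work, and it is the same engine this paper runs for Theorems \ref{main1}--\ref{main8}. But your write-up defers exactly the steps that constitute the proof, and one of its guiding assumptions looks wrong. You never exhibit the four vertices $u,v,w,s$ or identify the six subregions, and your expectation that the condensation decrements \emph{a single interior entry} $t_i$ (or an adjacent pair) is not how these arguments go: deleting corner unit triangles creates chains of forced lozenges whose removal alters the \emph{extreme} parameters of the region (the outer gap, the last entry $t_{2l}$, and the global trapezoid dimensions), not an arbitrary interior dent. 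Compare the paper's own recurrence (\ref{recur1}), in which the dent sequence $\textbf{a}$ is completely inert and only the ambient parameters $x,y,z$ move; likewise the paper's vertex placements (Figure \ref{MidarrayKuo1}) sit at the two eastern corners, not on the dented base as you propose --- a generic choice on the base produces subregions with modified dents that leave the family, which is the very failure mode you flag but do not resolve. Finally, calling the resulting hyperfactorial identity ``routine telescoping'' conceals what is, judging by the two-hole verification inside the proof of Theorem \ref{main1}, the bulk of the labor: pages of parity-sensitive Pochhammer and $\Hf_2$ bookkeeping, with separate claims like Claims \ref{claimP}--\ref{claimQ2} needed just to organize it.

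Your reserve plan is also circular in this setting. Ciucu's factorization (Lemma \ref{ciucuthm}) writes the matching number of a symmetric cover as $2^{k}\M(G^{+})\M(G^{-})$, where one half carries the $1/2$-weights and the other does not; to deduce the primed identities (\ref{QARb}) and (\ref{QARd}) from the unprimed ones (\ref{QARa}) and (\ref{QARc}) this way, you would need an independent evaluation of the symmetric hexagon with a dented symmetry axis. But that count is precisely what gets \emph{derived from} the halved-hexagon formulas (this is the content of Theorem \ref{main9}), not an available input to them; Ciucu's central-holes result \cite{Ciucu2} does not cover dents at arbitrary positions on the axis. So the factorization cannot cut the bookkeeping in half: the primed and unprimed cases must be carried through the condensation induction together, coupled, which is what the cited proof does.
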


\begin{figure}\centering
\setlength{\unitlength}{3947sp}%
\begingroup\makeatletter\ifx\SetFigFont\undefined%
\gdef\SetFigFont#1#2#3#4#5{%
  \reset@font\fontsize{#1}{#2pt}%
  \fontfamily{#3}\fontseries{#4}\fontshape{#5}%
  \selectfont}%
\fi\endgroup%
\resizebox{13cm}{!}{
\begin{picture}(0,0)%
\includegraphics{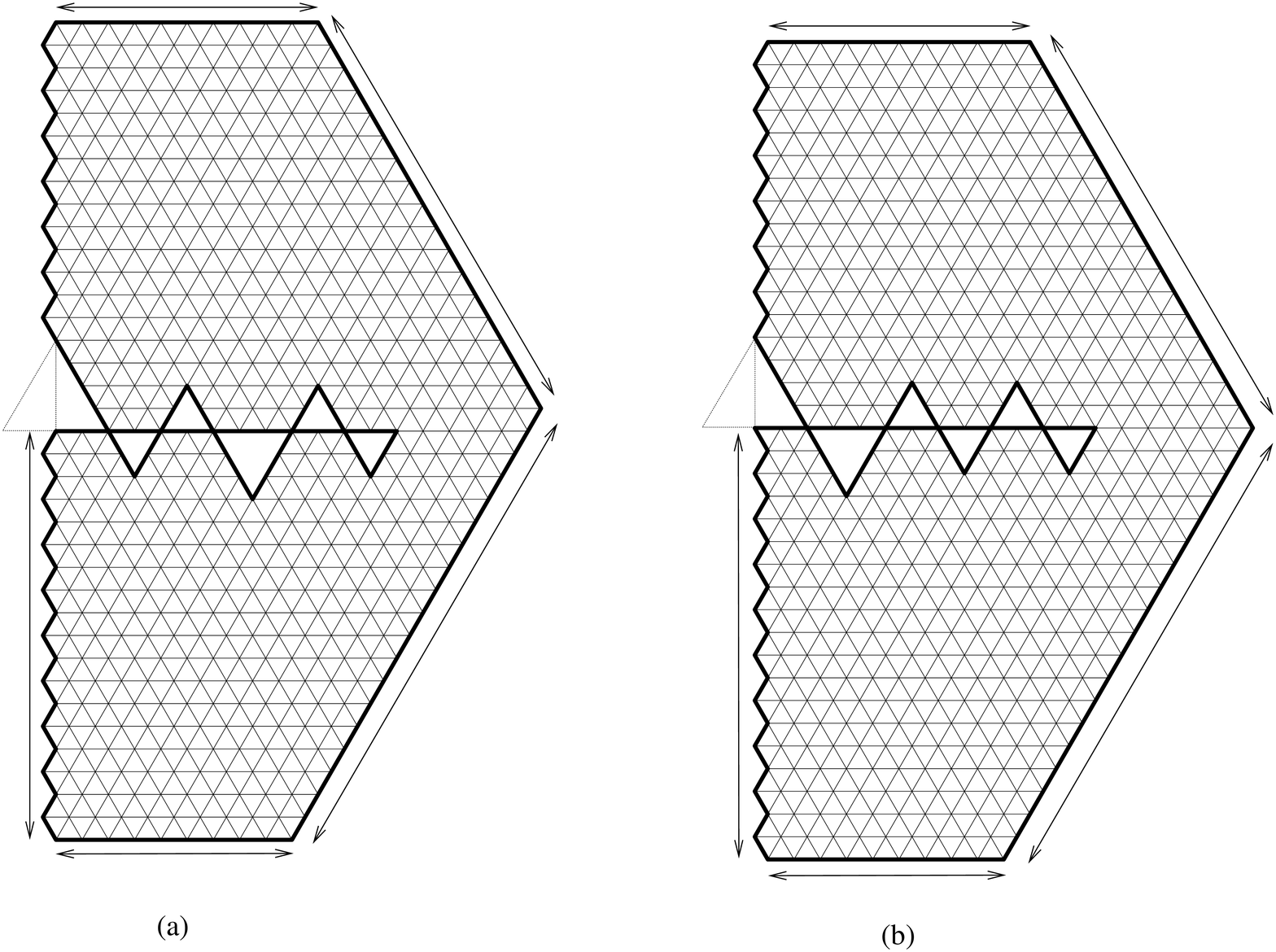}%
\end{picture}%
%
%

\begin{picture}(19122,14547)(2375,-13913)
\put(4363,-6421){\makebox(0,0)[lb]{\smash{{\SetFigFont{20}{24.0}{\rmdefault}{\mddefault}{\updefault}{$a_2$}%
}}}}
\put(5113,-6046){\makebox(0,0)[lb]{\smash{{\SetFigFont{20}{24.0}{\rmdefault}{\mddefault}{\updefault}{$a_3$}%
}}}}
\put(6013,-6519){\makebox(0,0)[lb]{\smash{{\SetFigFont{20}{24.0}{\rmdefault}{\mddefault}{\updefault}{$a_4$}%
}}}}
\put(7078,-6046){\makebox(0,0)[lb]{\smash{{\SetFigFont{20}{24.0}{\rmdefault}{\mddefault}{\updefault}{$a_5$}%
}}}}
\put(7858,-6421){\makebox(0,0)[lb]{\smash{{\SetFigFont{20}{24.0}{\rmdefault}{\mddefault}{\updefault}{$a_6$}%
}}}}
\put(3999,269){\makebox(0,0)[lb]{\smash{{\SetFigFont{20}{24.0}{\rmdefault}{\mddefault}{\updefault}{$x+a_2+a_4+a_6$}%
}}}}
\put(8484,-1336){\rotatebox{300.0}{\makebox(0,0)[lb]{\smash{{\SetFigFont{20}{24.0}{\rmdefault}{\mddefault}{\updefault}{$y+z+2a_1+2a_3+2a_5$}%
}}}}}
\put(8431,-10914){\rotatebox{60.0}{\makebox(0,0)[lb]{\smash{{\SetFigFont{20}{24.0}{\rmdefault}{\mddefault}{\updefault}{$y+z+2a_2+2a_4+2a_6$}%
}}}}}
\put(3819,-12856){\makebox(0,0)[lb]{\smash{{\SetFigFont{20}{24.0}{\rmdefault}{\mddefault}{\updefault}{$x+a_1+a_3+a_5$}%
}}}}
\put(2694,-5709){\rotatebox{60.0}{\makebox(0,0)[lb]{\smash{{\SetFigFont{20}{24.0}{\rmdefault}{\mddefault}{\updefault}{$2a_1$}%
}}}}}
\put(2821,-10891){\rotatebox{90.0}{\makebox(0,0)[lb]{\smash{{\SetFigFont{20}{24.0}{\rmdefault}{\mddefault}{\updefault}{$2z+2a_2+2a_4+2a_6$}%
}}}}}
\put(3406,-6061){\makebox(0,0)[lb]{\smash{{\SetFigFont{20}{24.0}{\rmdefault}{\mddefault}{\updefault}{$a_1$}%
}}}}
\put(13076,-5679){\rotatebox{60.0}{\makebox(0,0)[lb]{\smash{{\SetFigFont{20}{24.0}{\rmdefault}{\mddefault}{${0,0,0}2a_1$}%
}}}}}
\put(13846,-5979){\makebox(0,0)[lb]{\smash{{\SetFigFont{20}{24.0}{\rmdefault}{\mddefault}{\updefault}{$a_1$}%
}}}}
\put(14896,-6444){\makebox(0,0)[lb]{\smash{{\SetFigFont{20}{24.0}{\rmdefault}{\mddefault}{\updefault}{$a_2$}%
}}}}
\put(15886,-5994){\makebox(0,0)[lb]{\smash{{\SetFigFont{20}{24.0}{\rmdefault}{\mddefault}{\updefault}{$a_3$}%
}}}}
\put(16681,-6384){\makebox(0,0)[lb]{\smash{{\SetFigFont{20}{24.0}{\rmdefault}{\mddefault}{\updefault}{$a_4$}%
}}}}
\put(17476,-5994){\makebox(0,0)[lb]{\smash{{\SetFigFont{20}{24.0}{\rmdefault}{\mddefault}{\updefault}{$a_5$}%
}}}}
\put(18241,-6354){\makebox(0,0)[lb]{\smash{{\SetFigFont{20}{24.0}{\rmdefault}{\mddefault}{\updefault}{$a_6$}%
}}}}
\put(14409,-13179){\makebox(0,0)[lb]{\smash{{\SetFigFont{20}{24.0}{\rmdefault}{\mddefault}{\updefault}{$x+a_1+a_3+a_5$}%
}}}}
\put(14588, -9){\makebox(0,0)[lb]{\smash{{\SetFigFont{20}{24.0}{\rmdefault}{\mddefault}{\updefault}{$x+a_2+a_4+a_6$}%
}}}}
\put(18841,-1066){\rotatebox{300.0}{\makebox(0,0)[lb]{\smash{{\SetFigFont{20}{24.0}{\rmdefault}{\mddefault}{\updefault}{$y+z-1+2a_1+2a_3+2a_5$}%
}}}}}
\put(19006,-11296){\rotatebox{60.0}{\makebox(0,0)[lb]{\smash{{\SetFigFont{20}{24.0}{\rmdefault}{\mddefault}{\updefault}{$y+z-1+2a_2+2a_4+2a_6$}%
}}}}}
\put(13276,-11101){\rotatebox{90.0}{\makebox(0,0)[lb]{\smash{{\SetFigFont{20}{24.0}{\rmdefault}{\mddefault}{\updefault}{$2z-1+2a_2+2a_4+2a_6$}%
}}}}}
\end{picture}}
\caption{The halved hexagons with an array of holes on the staircase boundary: (a) $H^{(1)}_{3,3,2}(2,2,2,3,2,2)$ and (b) $H^{(2)}_{3,3,3}(2,3,2,2,2,2)$.}\label{middlehole1}
\end{figure}

Assume that $x,y,z$ are non-negative integers and that $\textbf{a}=(a_1,a_2,\dotsc,a_k)$ is a sequence of $k$ non-negative integers ($\textbf{a}$ may be an empty sequence). We consider a pentagonal region whose northern, northeastern, southeastern, and south sides have respectively lengths $x+\E(\textbf{a}),y+z+2\Od(\textbf{a}),y+z+2\E(\textbf{a}),x+\Od(\textbf{a})$, and the western side follows a vertical zigzag lattice path with $y+z+\E(\textbf{a})+\Od(\textbf{a})$ steps. Next, we remove an array of alternating up-pointing and down-pointing triangles at level $2z+2\E(\textbf{a})$ from the bottom. The array of triangles starts with an up-pointing half triangle of side $2a_1$, then the next triangles have sides $a_2,a_3,\dotsc,a_k$, ordered from left to right (see Figure \ref{middlehole1}(a) for the case $x=3,y=3,z=2,k=6,a_1=2,a_2=2,a_3=2,a_4=3,a_5=2,a_6=2$). Denote by $H^{(1)}_{x,y,z}(\textbf{a})=H^{(1)}_{x,y,z}(a_1,a_2,\dotsc,a_k)$ the resulting region.

Next, we investigate a variation $H^{(2)}_{x,y,z}(\textbf{a})$ of $H^{(1)}_{x,y,z}(\textbf{a})$ as follows. We start with a pentagonal region of side-lengths $x+\E(\textbf{a}),y+z-1+2\Od(\textbf{a}),y+z-1+2\E(\textbf{a}),x+\Od(\textbf{a}),y+z-1\E(\textbf{a})+\Od(\textbf{a})$. The array of triangles is now removed at an odd level, $2z+2\E(\textbf{a})-1$, from the bottom (instead of the distance $2z+2\E(\textbf{a})$ in the case of  the $H^{(1)}$-regions). Figure \ref{middlehole1}(b) illustrates the region for the case $x=3,y=3,z=3,k=6,a_1=2,a_2=3,a_3=2,a_4=2,a_5=2,a_6=2$. Lozenge tilings of the two `defected' halved hexagons $H^{(1)}_{x,y,z}(\textbf{a})$ and $H^{(2)}_{x,y,z}(\textbf{a})$ are always enumerated by simple product formulas.

\begin{thm}\label{main1}
Assume that $x,y,z,k$ are non-negative integers and  $\textbf{a}=(a_1,a_2,\dotsc,a_k)$ is a sequence of $k$ non-negative integers. The number of tilings of the defected halved hexagon  $H^{(1)}_{x,y,z}(a_1,a_2,\dotsc,a_k)$ is given by
\begin{align}\label{main1eq1}
\M&(H^{(1)}_{x,y,z}(a,b))=\frac{\Pn_{y,y+2a,b}\Pn_{z+b,z+b,a}\Q(a,b,x,y+z)}{\Pn_{y+z+b,y+z+b,a}}\notag\\
&\frac{\T(x+b+1,y+a-1,a)\T(x+z+a+b+2,y+a-1,a)\T(2a+b+2,y+b-1,b)\T(z+1,y+b-1,b)}{\T(b+1,y+a-1,a)\T(z+a+b+2,y+a-1,a)\T(x+2a+b+2,y+b-1,b)\T(x+z+1,y+b-1,b)},
\end{align}
for $k\geq 2$
\begin{align}\label{main1eq2}
\M&(H^{(1)}_{x,y,z}(a_1,a_2,\dotsc,a_{2k}))=\M(H^{(1)}_{x,y,z}(\Od(\textbf{a}),\E(\textbf{a})))\frac{\Q(0,a_1,\dotsc,a_{2k},y)\Q(a_1,\dotsc,a_{2k}+z)}{\Pn_{y,y+2\Od(\textbf{a}),\E(\textbf{a})}\Pn_{z+\E(\textbf{a}),z+\E(\textbf{a}),\Od(\textbf{a})}}\notag\\
&\times \prod_{i=2}^{k}\frac{\T(x+z+\e_{i}(\textbf{a})+1,a_{2i-2}+\od_{i}(\textbf{a})-1,\od_{i}(\textbf{a}))}{\T(x+y+\e_{i}(\textbf{a})+1,a_{2i-2}+\od_{i}(\textbf{a})-1,\od_{i}(\textbf{a}))}\frac{\T(y+\e_{i}(\textbf{a})+1,a_{2i-2}+\od_{i}(\textbf{a})-1,\od_{i}(\textbf{a}))}{\T(z+\e_{i}(\textbf{a})+1,a_{2i-2}+\od_{i}(\textbf{a})-1,\od_{i}(\textbf{a}))}\notag\\
&\times \prod_{i=2}^{k}\frac{\T(x+y+\s_{2i-1}(\textbf{a})+\s_{2k}(\textbf{a})+2,a_{2i-2}+\od_{i}(\textbf{a})-1,\od_{i}(\textbf{a}))}{\T(x+z+\s_{2i-1}(\textbf{a})+\s_{2k}(\textbf{a})+2,a_{2i-2}+\od_{i}(\textbf{a})-1,\od_{i}(\textbf{a}))}\notag\\
&\times \prod_{i=2}^{k}\frac{\T(z+\s_{2i-1}(\textbf{a})+\s_{2k}(\textbf{a})+2,a_{2i-2}+\od_{i}(\textbf{a})-1,\od_{i}(\textbf{a}))}{\T(y+\s_{2i-1}(\textbf{a})+\s_{2k}(\textbf{a})+2,a_{2i-2}+\od_{i}(\textbf{a})-1,\od_{i}(\textbf{a}))},
\end{align}
and
\begin{equation}\label{main1eqx}
\M(H^{(1)}_{x,y,z}(a_1,a_2,\dotsc,a_{2k-1}))=\M(H^{(1)}_{x,y,z}(a_1,a_2,\dotsc,a_{2k-1},0)).
\end{equation}
\end{thm}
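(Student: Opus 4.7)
The plan is to establish the three parts of Theorem \ref{main1} in the order in which they are most naturally accessible: first the trivial reduction \eqref{main1eqx}, then the two-parameter base case \eqref{main1eq1}, and finally the general even case \eqref{main1eq2} by induction on $k$.

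Equation \eqref{main1eqx} is really a matter of unpacking the definition of $H^{(1)}_{x,y,z}$. Appending a trailing zero $a_{2k}=0$ to the sequence $\textbf{a}=(a_1,\dotsc,a_{2k-1})$ does not alter any of the boundary side-lengths (they depend only on $x,y,z,\Od(\textbf{a}),\E(\textbf{a})$, and the first two quantities are invariant under appending a zero), nor does it change the location or the sizes of any removed triangle on the staircase side, since the extra ``triangle'' has side $0$. Hence the two regions coincide as subsets of the triangular lattice and equation \eqref{main1eqx} is immediate, reducing the odd case to the even case.

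For the base case \eqref{main1eq1}, my plan is to apply Kuo graphical condensation (the version announced in Section \ref{Background}) to the dual bipartite graph $G$ of $H^{(1)}_{x,y,z}(a,b)$, with four unit-triangle vertices chosen near the corners of the pentagon in a way consistent with the black/white bipartition. The resulting recurrence has the form $\M(G)\cdot\M(G')=\M(G_1)\M(G_2)+\M(G_3)\M(G_4)$, where I will arrange the four vertices so that each of $G',G_1,G_2,G_3,G_4$ is either a plain halved hexagon $\mathcal{P}_{*,*,*}$ (Theorem \ref{Proctiling}), a $\mathcal{Q}$-type region whose tiling count is given by Lemma \ref{QAR}, or a single-triangle halved hexagon handled in Rohatgi's paper \cite{Ranjan}. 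The closed form in \eqref{main1eq1} is then obtained by solving for $\M(H^{(1)}_{x,y,z}(a,b))$, and most of the work is an algebraic simplification that matches the product of $\Pn, \Q, \T$ factors on the right side. The trapezoidal products $\T$ will arise naturally from telescoping Pochhammer ratios coming from the $\mathcal{P}$-formula.

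For the general even case \eqref{main1eq2}, I would induct on $k$, using the base case $k=1$ of \eqref{main1eq1} as the initial step. Again I apply Kuo condensation to the dual graph of $H^{(1)}_{x,y,z}(a_1,\dotsc,a_{2k})$, choosing the four boundary unit triangles so that the five regions appearing in the recurrence break up as (i) a smaller $H^{(1)}$-region with fewer or smaller parameters, handled by the inductive hypothesis together with \eqref{main1eqx}; (ii) an $H^{(2)}$-type region from a companion theorem proved in parallel; and (iii) a $\mathcal{Q}$- or $\mathcal{K}$-type region enumerated by Lemma \ref{QAR}. It will be natural to run the induction on $H^{(1)}$ and $H^{(2)}$ simultaneously, since each Kuo recurrence for one type tends to produce regions of the other type. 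The inductive step then reduces to verifying that the conjectural product formula \eqref{main1eq2} satisfies the same recurrence, which becomes a direct check on products of hyperfactorials, $\T$ and $\Pn$ factors.

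The main obstacle, as usual for Kuo-condensation proofs of this size, will be twofold. The combinatorial difficulty is to find the correct four-vertex placement so that every resulting sub-region actually falls into one of the enumerable classes (this typically forces the four vertices near the northwest, northeast, southeast corners and just above the removed array, with exact choices dictated by parities of $\Od(\textbf{a})$ and $\E(\textbf{a})$). The analytic difficulty is the algebraic verification that the dense product in \eqref{main1eq2}, involving $\Q, \Pn$, and iterated $\T$ factors indexed by $i=2,\dotsc,k$, collapses under the Kuo identity; since each inductive step changes only one parameter $a_j$ by one unit or shifts a single side-length, most factors cancel in pairs and the remaining cross-terms should match by standard Pochhammer manipulations of the form $(x)_n(x+n)_m=(x)_{n+m}$. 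I expect this verification to be the most labor-intensive part, but entirely routine in principle.
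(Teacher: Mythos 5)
Your treatment of (\ref{main1eqx}) is fine and matches the paper, but the engine you propose for (\ref{main1eq1}) has a genuine gap. You plan to place the four Kuo vertices so that the five companion graphs become plain halved hexagons $\mathcal{P}_{*,*,*}$, $\mathcal{Q}$-type regions, or Rohatgi regions, and then ``solve'' the Kuo identity for $\M(H^{(1)}_{x,y,z}(a,b))$. This cannot work: the four removed unit triangles lie on the boundary, and their removal (together with the forced lozenges it creates) can only shift the side parameters of the pentagon; it can never erase the array of triangular holes sitting at level $2z+2\E(\textbf{a})$ in the interior. Consequently \emph{every} region in any such Kuo identity is again an $H^{(1)}$-type region with the same hole sequence. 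Indeed, the paper's choice of $u,v,w,s$ at the upper-right and lower-right corners yields
\begin{equation*}
\M(H^{(1)}_{x,y,z}(\textbf{a}))\M(H^{(1)}_{x,y-1,z-1}(\textbf{a}))=\M(H^{(1)}_{x,y-1,z}(\textbf{a}))\M(H^{(1)}_{x,y,z-1}(\textbf{a}))+\M(H^{(1)}_{x+1,y-1,z-1}(\textbf{a}))\M(H^{(1)}_{x-1,y,z}(\textbf{a})),
\end{equation*}
so there is nothing to solve for in one shot; one must induct on $x+y+z$. Your proposal omits the indispensable base cases of that induction: when $x=0$, $y=0$, or $z=0$, the region splits by the Region-splitting Lemma \ref{RS} into $\mathcal{P}$- and $\mathcal{Q}$-type pieces, and this splitting is the \emph{only} place where Proctor's theorem and Lemma \ref{QAR} enter --- they never appear as companion regions inside the condensation identity.

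The same misconception undermines your plan for (\ref{main1eq2}). You induct on $k$ and expect the Kuo recurrence for $H^{(1)}$ to produce $H^{(2)}$-type and $\mathcal{K}$-type companions, but condensation at boundary corners changes neither the number of holes nor the parity of the level at which the array sits, so no $H^{(2)}$ region ever arises and your setup contains no mechanism reducing $2k$ holes to fewer. The paper instead inducts once more on $x+y+z$ (the displayed recurrence holds verbatim for arbitrary $\textbf{a}$), with base cases $x=0$, $y=0$, $z=0$ splitting into two $\mathcal{Q}$-regions, and then disposes of the multi-hole algebra by a structural observation your sketch lacks: writing the conjectured product as $\Phi_{x,y,z}(\textbf{a})=\phi_{x,y,z}(\Od(\textbf{a}),\E(\textbf{a}))\,f_{x,y,z}(\textbf{a})$, where $f$ depends on $(x,y,z)$ only through $y$, $z$, $x+y$, $x+z$. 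All $f$-ratios then cancel identically in the recurrence, collapsing the verification to the two-hole case already done --- without this cancellation, the ``routine Pochhammer manipulations'' you defer to would have to confront all the $i$-indexed $\T$-products simultaneously, and nothing in your proposal indicates how they would telescope.
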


We can view a halved hexagon with an odd number of triangular holes as a special case of the one with an even-number of holes, when the rightmost hole has size $0$, as illustrated in the equation (\ref{main1eqx}). For the sake of simplicity,  in our next theorems (Theorems \ref{main2}--\ref{main8}), we only show the tiling formulas of regions with an even number of holes.

\begin{thm}\label{main2}
Assume that $x,y,z,k$ are non-negative integers and  $\textbf{a}=(a_1,a_2,\dotsc,a_k)$ is a sequence of $k$ non-negative integers. The number of tilings of the defected halved hexagon  $H^{(2)}_{x,y,z}(a_1,a_2,\dotsc,a_k)$ is given by
\begin{align}
\M&(H^{(2)}_{x,y,z}(a,b))=\frac{\Pn_{y,y+2a,b}\Pn_{z+b-1,z+b-1,a}\K(a,b,x,y+z)}{\Pn_{y+z+b-1,y+z+b-1,a}}\notag\\
&\frac{\T(x+b+1,y+a-1,a)\T(x+z+a+b+1,y+a-1,a)\T(2a+b+1,y+b-1,b)\T(z+1,y+b-1,b)}{\T(b+1,y+a-1,a)\T(z+a+b+1,y+a-1,a)\T(x+2a+b+1,y+b-1,b)\T(x+z+1,y+b-1,b)},
\end{align}
for $k\geq 2$
\begin{align}
\M&(H^{(2)}_{x,y,z}(a_1,a_2,\dotsc,a_{2k}))=\M(H^{(2)}_{x,y,z}(\Od(\textbf{a}),\E(\textbf{a})))\frac{\K(0,a_1,\dotsc,a_{2k},y)\K(a_1,\dotsc,a_{2k}+z)}{\Pn_{y,y+2\Od(\textbf{a}),\E(\textbf{a})}\Pn_{z+\E(\textbf{a})-1,z+\E(\textbf{a})-1,\Od(\textbf{a})}}\notag\\
&\times \prod_{i=2}^{k}\frac{\T(x+z+\e_{i}(\textbf{a})+1,a_{2i-2}+\od_{i}(\textbf{a})-1,\od_{i}(\textbf{a}))}{\T(x+y+\e_{i}(\textbf{a})+1,a_{2i-2}+\od_{i}(\textbf{a})-1,\od_{i}(\textbf{a}))}\frac{\T(y+\e_{i}(\textbf{a})+1,a_{2i-2}+\od_{i}(\textbf{a})-1,\od_{i}(\textbf{a}))}{\T(z+\e_{i}(\textbf{a})+1,a_{2i-2}+\od_{i}(\textbf{a})-1,\od_{i}(\textbf{a}))}\notag\\
&\times \prod_{i=2}^{k}\frac{\T(x+y+\s_{2i-1}(\textbf{a})+\s_{2k}(\textbf{a})+1,a_{2i-2}+\od_{i}(\textbf{a})-1,\od_{i}(\textbf{a}))}{\T(x+z+\s_{2i-1}(\textbf{a})+\s_{2k}(\textbf{a})+1,a_{2i-2}+\od_{i}(\textbf{a})-1,\od_{i}(\textbf{a}))}\notag\\
&\times \prod_{i=2}^{k}\frac{\T(z+\s_{2i-1}(\textbf{a})+\s_{2k}(\textbf{a})+1,a_{2i-2}+\od_{i}(\textbf{a})-1,\od_{i}(\textbf{a}))}{\T(y+\s_{2i-1}(\textbf{a})+\s_{2k}(\textbf{a})+1,a_{2i-2}+\od_{i}(\textbf{a})-1,\od_{i}(\textbf{a}))}.
\end{align}
\end{thm}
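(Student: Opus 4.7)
The plan is to mirror the proof of Theorem \ref{main1} almost step by step, with the $\mathcal{Q}$-type regions and $\Pn$-regions with parameters $y+z+\E(\textbf{a})$ replaced throughout by the $\mathcal{K}$-type regions and $\Pn$-regions with parameters $y+z+\E(\textbf{a})-1$. The structural reason is that $H^{(2)}_{x,y,z}(\textbf{a})$ differs from $H^{(1)}_{x,y,z}(\textbf{a})$ only in that the western zigzag has ``one less full bump'' and the array of holes sits one level higher; this is exactly the combinatorial difference between $\mathcal{Q}(\textbf{t})$ and $\mathcal{K}(\textbf{t})$ recorded in Lemma \ref{QAR}. Thus the overall strategy is: establish the base case $k=2$ by a direct region-decomposition argument, then set up a Kuo-condensation recurrence satisfied by $\M(H^{(2)}_{x,y,z}(\textbf{a}))$, and finally verify that the conjectured product formula satisfies the same recurrence.

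For the base case $k=2$, so $\textbf{a}=(a,b)$, the plan is to exhibit forced lozenges along the boundary of the array of holes of $H^{(2)}_{x,y,z}(a,b)$ and cut the remaining region along horizontal lattice lines through the notches. This should split the region, up to removing the forced lozenges, into a rectangular $\Pn_{y,y+2a,b}$-piece on the upper-right, a trapezoid $\Pn_{z+b-1,z+b-1,a}$ divided by a common $\Pn_{y+z+b-1,y+z+b-1,a}$ factor, and a $\mathcal{K}(a,b,x,y+z)$-piece along the western side. The $\T$-quotient in the claimed formula then records the mismatch between the actual truncated pentagonal pieces and the rectangular/trapezoidal pieces their tiling counts are expressed by, exactly as in the $H^{(1)}$ case, only with the index ``$y+z$'' shifted by $-1$ to match the $\mathcal{K}$-formula.

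For $k\geq 2$ and the inductive step, I would apply Kuo's graphical-condensation theorem (quoted in Section \ref{Background}) to the dual graph of $H^{(2)}_{x,y,z}(\textbf{a})$, choosing the four distinguished boundary vertices at the four ``corners'' of the region suggested by the array of notches (concretely: one at the top of the western zigzag, one near the south-east corner, and two flanking the rightmost notch of the array). Removing the appropriate pairs then yields five regions of the form $H^{(2)}_{x',y',z'}(\textbf{a}')$, $H^{(2)}_{x',y',z'}(\textbf{a}'')$, \ldots\ with strictly smaller parameters or with $\od_k,\e_k$ shifted, allowing the induction to close. The recurrence to be verified algebraically is then an identity among products of $\T$, $\K$, and $\Pn$ expressions; this reduces to a polynomial identity in the parameters $x,y,z,a_1,\dotsc,a_{2k}$ after cancellation.

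The main obstacle is twofold. First, one must pick the four Kuo vertices so that every subregion that appears in the condensation is itself of $H^{(2)}$-type (or reduces to one by forced lozenges), since the array of holes lies along the staircase side and careless choices easily produce regions that are no longer of this form — this is the step where the proof differs most delicately from the $H^{(1)}$ case treated in \cite{Lai4}. Second, verifying that the explicit product in the statement satisfies the resulting Kuo recurrence requires a lengthy but essentially mechanical manipulation of the $\T$- and hyperfactorial quotients, using the analogues of the identities proved in \cite{Lai4} for the $\mathcal{K}$-family in place of the $\mathcal{Q}$-family; the shifts ``$y+z$'' $\to$ ``$y+z-1$'' and ``$+2$'' $\to$ ``$+1$'' in the arguments of the $\T$-factors will need to be tracked carefully to confirm the recurrence balances.
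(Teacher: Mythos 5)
Your high-level instinct is correct --- the paper itself remarks that this theorem is proved exactly as Theorem \ref{main1}, with the $\mathcal{Q}$-counts replaced by $\mathcal{K}$-counts and the parameter $y+z$ shifted to $y+z-1$ --- but the two concrete steps you specify are not the ones that make that proof work, and both fail as stated. First, your base case: for general $x,y,z>0$ there are no forced lozenges along the array of holes in $H^{(2)}_{x,y,z}(a,b)$, and the region does not split along a lattice line through the notches; the Separating and Balancing conditions of Lemma \ref{RS} fail there, and indeed the claimed formula \emph{cannot} arise from any region splitting, since it contains the denominator $\Pn_{y+z+b-1,y+z+b-1,a}$ and a quotient of $\T$-functions, which no application of the Region-splitting Lemma can produce (saying the $\T$-quotient ``records the mismatch'' is not an argument). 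In the paper's scheme the two-hole formula is itself proved by induction on $x+y+z$, and region splitting enters only at the degenerate base cases $x=0$, $y=0$, $z=0$, where forcing genuinely occurs and the region decomposes into $\mathcal{P}$-type and $\mathcal{K}$-type pieces whose counts come from Theorem \ref{Proctiling} and Lemma \ref{QAR}.

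Second, your Kuo vertices are misplaced. The paper chooses all four vertices \emph{away} from the holes: $u,v$ are the up- and down-pointing unit triangles at the upper-right corner and $w,s$ those at the lower-right corner, so that after removing forced lozenges all five graphs in (\ref{Kuoeq}) correspond to $H^{(2)}$-type regions with the \emph{same} hole array $\textbf{a}$ and only $(x,y,z)$ perturbed, yielding $\M(H^{(2)}_{x,y,z}(\textbf{a}))\M(H^{(2)}_{x,y-1,z-1}(\textbf{a}))=\M(H^{(2)}_{x,y-1,z}(\textbf{a}))\M(H^{(2)}_{x,y,z-1}(\textbf{a}))+\M(H^{(2)}_{x+1,y-1,z-1}(\textbf{a}))\M(H^{(2)}_{x-1,y,z}(\textbf{a}))$. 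Your placement ``flanking the rightmost notch'' would alter the entries of $\textbf{a}$, forcing you to verify that the product formula satisfies a recurrence in the hole sizes (which you never set up) and to supply additional base cases in the $a_i$'s. You also omit the one genuinely clever step of the multi-hole verification: writing the conjectured product as $\Phi_{x,y,z}(\textbf{a})=\phi_{x,y,z}(\Od(\textbf{a}),\E(\textbf{a}))\,f_{x,y,z}(\textbf{a})$ and observing that $f$ depends only on $y$, $z$, $x+y$, and $x+z$, so the $f$-ratios in the Kuo identity cancel and the general-$k$ verification collapses to the already-proved two-hole identity. Without that reduction, your ``lengthy but essentially mechanical manipulation'' for arbitrary $k$ has no reason to terminate.
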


\begin{figure}\centering
\setlength{\unitlength}{3947sp}%
\begingroup\makeatletter\ifx\SetFigFont\undefined%
\gdef\SetFigFont#1#2#3#4#5{%
  \reset@font\fontsize{#1}{#2pt}%
  \fontfamily{#3}\fontseries{#4}\fontshape{#5}%
  \selectfont}%
\fi\endgroup%
\resizebox{13cm}{!}{
\begin{picture}(0,0)%
\includegraphics{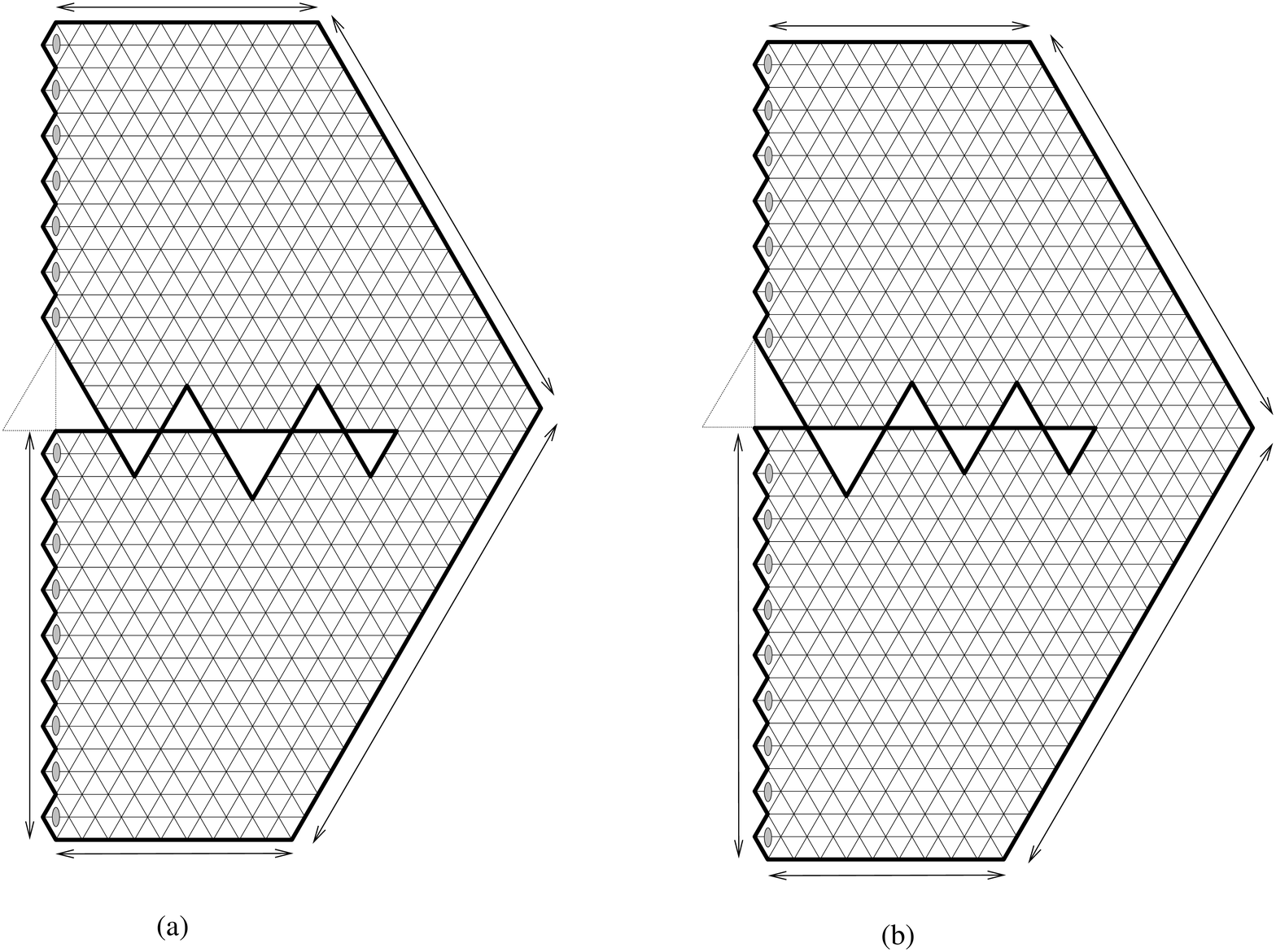}%
\end{picture}%
%
%

\begin{picture}(19133,14562)(1001,-14641)
\put(3000,-7147){\makebox(0,0)[lb]{\smash{{\SetFigFont{20}{24.0}{\rmdefault}{\mddefault}{\updefault}{$a_2$}%
}}}}
\put(3750,-6772){\makebox(0,0)[lb]{\smash{{\SetFigFont{20}{24.0}{\rmdefault}{\mddefault}{\updefault}{$a_3$}%
}}}}
\put(4650,-7245){\makebox(0,0)[lb]{\smash{{\SetFigFont{20}{24.0}{\rmdefault}{\mddefault}{\updefault}{$a_4$}%
}}}}
\put(5715,-6772){\makebox(0,0)[lb]{\smash{{\SetFigFont{20}{24.0}{\rmdefault}{\mddefault}{\updefault}{$a_5$}%
}}}}
\put(6495,-7147){\makebox(0,0)[lb]{\smash{{\SetFigFont{20}{24.0}{\rmdefault}{\mddefault}{\updefault}{$a_6$}%
}}}}
\put(2636,-457){\makebox(0,0)[lb]{\smash{{\SetFigFont{20}{24.0}{\rmdefault}{\mddefault}{\updefault}{$x+a_2+a_4+a_6$}%
}}}}
\put(7121,-2062){\rotatebox{300.0}{\makebox(0,0)[lb]{\smash{{\SetFigFont{20}{24.0}{\rmdefault}{\mddefault}{\updefault}{$y+z+2a_1+2a_3+2a_5$}%
}}}}}
\put(7068,-11640){\rotatebox{60.0}{\makebox(0,0)[lb]{\smash{{\SetFigFont{20}{24.0}{\rmdefault}{\mddefault}{\updefault}{$y+z+2a_2+2a_4+2a_6$}%
}}}}}
\put(2456,-13582){\makebox(0,0)[lb]{\smash{{\SetFigFont{20}{24.0}{\rmdefault}{\mddefault}{\updefault}{$x+a_1+a_3+a_5$}%
}}}}
\put(1331,-6435){\rotatebox{60.0}{\makebox(0,0)[lb]{\smash{{\SetFigFont{20}{24.0}{\rmdefault}{\mddefault}{\updefault}{$2a_1$}%
}}}}}
\put(1458,-11617){\rotatebox{90.0}{\makebox(0,0)[lb]{\smash{{\SetFigFont{20}{24.0}{\rmdefault}{\mddefault}{\updefault}{$2z+2a_2+2a_4+2a_6$}%
}}}}}
\put(2043,-6787){\makebox(0,0)[lb]{\smash{{\SetFigFont{20}{24.0}{\rmdefault}{\mddefault}{\updefault}{$a_1$}%
}}}}
\put(11713,-6405){\rotatebox{60.0}{\makebox(0,0)[lb]{\smash{{\SetFigFont{20}{24.0}{\rmdefault}{\mddefault}{\updefault}{$2a_1$}%
}}}}}
\put(12483,-6705){\makebox(0,0)[lb]{\smash{{\SetFigFont{20}{24.0}{\rmdefault}{\mddefault}{\updefault}{$a_1$}%
}}}}
\put(13533,-7170){\makebox(0,0)[lb]{\smash{{\SetFigFont{20}{24.0}{\rmdefault}{\mddefault}{\updefault}{$a_2$}%
}}}}
\put(14523,-6720){\makebox(0,0)[lb]{\smash{{\SetFigFont{20}{24.0}{\rmdefault}{\mddefault}{\updefault}{$a_3$}%
}}}}
\put(15318,-7110){\makebox(0,0)[lb]{\smash{{\SetFigFont{20}{24.0}{\rmdefault}{\mddefault}{\updefault}{$a_4$}%
}}}}
\put(16113,-6720){\makebox(0,0)[lb]{\smash{{\SetFigFont{20}{24.0}{\rmdefault}{\mddefault}{\updefault}{$a_5$}%
}}}}
\put(16878,-7080){\makebox(0,0)[lb]{\smash{{\SetFigFont{20}{24.0}{\rmdefault}{\mddefault}{\updefault}{$a_6$}%
}}}}
\put(13046,-13905){\makebox(0,0)[lb]{\smash{{\SetFigFont{20}{24.0}{\rmdefault}{\mddefault}{\updefault}{$x+a_1+a_3+a_5$}%
}}}}
\put(13225,-735){\makebox(0,0)[lb]{\smash{{\SetFigFont{20}{24.0}{\rmdefault}{\mddefault}{\updefault}{$x+a_2+a_4+a_6$}%
}}}}
\put(17478,-1792){\rotatebox{300.0}{\makebox(0,0)[lb]{\smash{{\SetFigFont{20}{24.0}{\rmdefault}{\mddefault}{\updefault}{$y+z-1+2a_1+2a_3+2a_5$}%
}}}}}
\put(17643,-12022){\rotatebox{60.0}{\makebox(0,0)[lb]{\smash{{\SetFigFont{20}{24.0}{\rmdefault}{\mddefault}{\updefault}{$y+z-1+2a_2+2a_4+2a_6$}%
}}}}}
\put(11913,-11827){\rotatebox{90.0}{\makebox(0,0)[lb]{\smash{{\SetFigFont{20}{24.0}{\rmdefault}{\mddefault}{\updefault}{$2z-1+2a_2+2a_4+2a_6$}%
}}}}}
\end{picture}}
\caption{The weighted halved hexagons with an array of holes on the staircase boundary: (a) $H^{(3)}_{3,3,2}(2,2,2,3,2,2)$  and (b) $H^{(4)}_{3,3,3}(2,3,2,2,2,2)$. The lozenges with shaded cores are weighted by $1/2$.}\label{middlehole2}
\end{figure}

Similar to the case of original halved hexagons $\mathcal{P}_{a,b,c}$, we are interested in the weighted versions $H^{(3)}_{x,y,z}(\textbf{a})$ and $H^{(4)}_{x,y,z}(\textbf{a})$ of the above regions $H^{(1)}_{x,y,z}(\textbf{a})$ and $H^{(2)}_{x,y,z}(\textbf{a})$, respectively, where the vertical lozenges along the western side are weighted by $1/2$ (see Figures \ref{middlehole2}(a) and (b), respectively; the lozenges with shaded cores are weighted by $1/2$). The tiling numbers of these weighted regions are also given by simple product formulas.

\begin{thm}\label{main3}
Assume that $x,y,z,k$ are non-negative integers and  $\textbf{a}=(a_1,a_2,\dotsc,a_k)$ is a sequence of $k$ non-negative integers. The number of tilings of the defected halved hexagon  $H^{(3)}_{x,y,z}(a_1,a_2,\dotsc,a_k)$ is given by
\begin{align}
\M&(H^{(3)}_{x,y,z}(a,b))=\frac{\Pn'_{y,y+2a,b}\Pn'_{z+b,z+b,a}\Q'(0,a,b,x,y+z)}{\Pn'_{y+z+b,y+z+b,a}}\notag\\
&\frac{\T(x+b+1,y+a-1,a)\T(x+z+a+b+1,y+a-1,a)\T(2a+b+1,y+b-1,b)\T(z+1,y+b-1,b)}{\T(b+1,y+a-1,a)\T(z+a+b+1,y+a-1,a)\T(x+2a+b+1,y+b-1,b)\T(x+z+1,y+b-1,b)},
\end{align}
and for $k\geq 2$
\begin{align}
\M&(H^{(3)}_{x,y,z}(a_1,a_2,\dotsc,a_{2k}))=2^{a_1}\M(H^{(3)}_{x,y,z}(\Od(\textbf{a}),\E(\textbf{a})))\frac{\Q'(a_1,a_2,\dotsc,a_{2k},z)\Q'(0,a_1,\dotsc,a_{2k}+z)}{\Pn'_{y,y+2\Od(\textbf{a}),\E(\textbf{a})}\Pn'_{z+\E(\textbf{a}),z+\E(\textbf{a}),\Od(\textbf{a})}}\notag\\
&\times \prod_{i=2}^{k}\frac{\T(x+z+\e_{i}(\textbf{a})+1,a_{2i-2}+\od_{i}(\textbf{a})-1,\od_{i}(\textbf{a}))}{\T(x+y+\e_{i}(\textbf{a})+1,a_{2i-2}+\od_{i}(\textbf{a})-1,\od_{i}(\textbf{a}))}\frac{\T(y+\e_{i}(\textbf{a})+1,a_{2i-2}+\od_{i}(\textbf{a})-1,\od_{i}(\textbf{a}))}{\T(z+\e_{i}(\textbf{a})+1,a_{2i-2}+\od_{i}(\textbf{a})-1,\od_{i}(\textbf{a}))}\notag\\
&\times \prod_{i=2}^{k}\frac{\T(x+y+\s_{2i-1}(\textbf{a})+\s_{2k}(\textbf{a})+1,a_{2i-2}+\od_{i}(\textbf{a})-1,\od_{i}(\textbf{a}))}{\T(x+z+\s_{2i-1}(\textbf{a})+\s_{2k}(\textbf{a})+1,a_{2i-2}+\od_{i}(\textbf{a})-1,\od_{i}(\textbf{a}))}\notag\\
&\times \prod_{i=2}^{k}\frac{\T(z+\s_{2i-1}(\textbf{a})+\s_{2k}(\textbf{a})+1,a_{2i-2}+\od_{i}(\textbf{a})-1,\od_{i}(\textbf{a}))}{\T(y+\s_{2i-1}(\textbf{a})+\s_{2k}(\textbf{a})+1,a_{2i-2}+\od_{i}(\textbf{a})-1,\od_{i}(\textbf{a}))}.
\end{align}
\end{thm}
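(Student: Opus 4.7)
The plan is to prove Theorem \ref{main3} by an induction argument driven by Kuo's graphical condensation, mirroring the strategy used for the non-weighted family in Theorem \ref{main1} and for the $\mathcal{Q}'$/$\mathcal{K}'$-regions in Lemma \ref{QAR}. Induction would be carried out on the parameter $y+z+\E(\textbf{a})+\Od(\textbf{a})$, with the two-parameter formula for $\M(H^{(3)}_{x,y,z}(a,b))$ serving as the first non-trivial case. For the base case I would view $H^{(3)}_{x,y,z}(a,b)$ as a halved hexagon $\mathcal{P}'$ with a horizontal strip containing two triangular holes of sides $a,b$ carved out; removing forced lozenges splits the region into a product of a weighted halved hexagon (counted by $\Pn'$) and a $\mathcal{Q}'$-region (counted by Lemma \ref{QAR}), and one matches the resulting product to the claimed ratio of $\Pn'$, $\Q'$, and $\T$-factors.

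For the inductive step I would apply the Kuo condensation identity in the version quoted in Section \ref{Background} (the same version the author uses in \cite{Lai4}) to a carefully chosen quadruple of boundary vertices on the dual graph of $H^{(3)}_{x,y,z}(a_1,\dots,a_{2k})$. The selection would place two vertices on the northern side and two near the western staircase at the height of the array of holes, so that each of the five regions appearing in Kuo's identity is, after removal of forced lozenges, again of type $H^{(3)}$ but with either $(x,y,z)$ shifted by $\pm 1$ in some coordinate, or with the first hole-parameter $a_1$ adjusted, or with one hole replaced by a pair of smaller blocks. Since the chosen vertices lie on or near the weighted staircase, the $\tfrac12$-weights behave compatibly with the forced-lozenge reductions, so the recurrence is an identity entirely among $H^{(3)}$-values.

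The induction hypothesis then turns the claim into a purely algebraic verification: substituting the explicit right-hand side of Theorem \ref{main3} into the Kuo recurrence should yield a genuine identity among products of $\T$, $\Pn'$, and $\Q'$. This check is handled by factoring out the common $\T$-blocks via the shift $\T(x,n,m)=(x)_n\cdot\T(x,n-2,m-1)$ and using standard Pochhammer manipulations to reduce everything to a telescoping identity; the $\Pn'$-ratios shift by one step according to Ciucu's weighted Proctor formula, and the $\Q'$-ratios shift by one step according to \eqref{QARb}. The odd-$k$ case \eqref{main1eqx}-analog is obtained by appending a trailing zero to $\textbf{a}$.

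The main obstacle, I expect, is the combinatorial geometry step: verifying that the four condensation vertices can be positioned so that all five of the resulting subregions (and not merely the two ``trivial'' ones) are again weighted halved hexagons with an array of holes on the staircase side, with the correct shifts in $\od_i(\textbf{a})$, $\e_i(\textbf{a})$, and $\s_{2i-1}(\textbf{a})$. Because the array of holes sits on the staircase boundary rather than on the flat base treated in \cite{Lai4}, the forced-lozenge patterns near the holes are asymmetric, and one must treat the cases where the leftmost affected hole is up-pointing versus down-pointing separately. Once the correct vertex placement is identified and the shifts in the index sums are tracked, the remaining verification is a routine but lengthy Pochhammer computation analogous to the one in \cite{Lai4}.
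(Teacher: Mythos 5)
Your overall template---Kuo condensation plus induction plus an algebraic verification of the recurrence---is indeed how the paper proceeds (it proves Theorem \ref{main1} this way and states that Theorems \ref{main2}--\ref{main4} are handled identically), but two of your concrete choices break. First, your base case is wrong: for generic $x,y,z>0$ the region $H^{(3)}_{x,y,z}(a,b)$ contains no forced lozenges and does \emph{not} split into a weighted halved hexagon times a $\mathcal{Q}'$-region; the region-splitting factorizations exist only in the degenerate situations $x=0$, $y=0$, $z=0$ (the $H^{(3)}$-analogues of Figures \ref{Halfhex2basex}, \ref{Halfhex2basey}, \ref{Halfhex2basez}). Consequently the two-hole formula cannot be verified directly by a splitting argument and then fed into the multi-hole induction; it must itself be proved by a Kuo-condensation induction on $x+y+z$ whose base cases are exactly those three degenerate ones, which is the structure of the paper's proof of \eqref{main1eq1}.

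Second, your condensation set-up does not work as described. Placing two of the four vertices near the western staircase at the height of the holes would adjust $a_1$ or fragment a hole, producing regions outside the $H^{(3)}$ family, to which the induction hypothesis does not apply, and it would also force you to track the weights of removed $\tfrac{1}{2}$-lozenges. The paper instead places all four vertices $u,v,w,s$ at the upper-right and lower-right corners (Figure \ref{MidarrayKuo1}), far from both the staircase and the holes, so that all five graphs in \eqref{Kuoeq} correspond to regions with the \emph{same} hole sequence $\textbf{a}$ and only $(x,y,z)$ shifted, yielding the recurrence \eqref{recur1}; no case analysis on up- versus down-pointing holes is needed at all. Relatedly, your induction parameter $y+z+\E(\textbf{a})+\Od(\textbf{a})$ fails to decrease in the term $\M(H^{(3)}_{x-1,y,z}(\textbf{a}))$ of that recurrence (it is unchanged), whereas the paper's parameter $x+y+z$ strictly drops in all five smaller terms. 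Finally, for $k\geq 2$ the paper does not redo the Pochhammer computation: it writes the conjectured product as $\phi_{x,y,z}(\Od(\textbf{a}),\E(\textbf{a}))\cdot f_{x,y,z}(\textbf{a})$, where $f$ depends only on $y$, $z$, $x+y$, and $x+z$, so the $f$-ratios cancel in the recurrence and the verification collapses to the already-established two-hole identity; you would need this observation (or an equivalent one) to make your ``routine but lengthy'' check actually terminate.
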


\begin{thm}\label{main4}
Assume that $x,y,z,k$ are non-negative integers and  $\textbf{a}=(a_1,a_2,\dotsc,a_k)$ is a sequence of $k$ non-negative integers. The number of tilings of the defected halved hexagon  $H^{(4)}_{x,y,z}(a_1,a_2,\dotsc,a_k)$ is given by
\begin{align}
\M&(H^{(4)}_{x,y,z}(a,b))=\frac{\Pn'_{y,y+2a,b}\Pn'_{z+b-1,z+b-1,a}\K'(a,b,x,y+z)}{\Pn'_{y+z+b-1,y+z+b-1,a}}\notag\\
&\frac{\T(x+b+1,y+a-1,a)\T(x+z+a+b,y+a-1,a)\T(2a+b,y+b-1,b)\T(z+1,y+b-1,b)}{\T(b+1,y+a-1,a)\T(z+a+b,y+a-1,a)\T(x+2a+b,y+b-1,b)\T(x+z+1,y+b-1,b)},
\end{align}
and for $k\geq 2$
\begin{align}
\M&(H^{(4)}_{x,y,z}(a_1,a_2,\dotsc,a_{2k}))=2^{a_1-1}\M(H^{(4)}_{x,y,z}(\Od(\textbf{a}),\E(\textbf{a})))\frac{\K'(0,a_1,\dotsc,a_{2k},y)\K'(a_1,\dotsc,a_{2k}+z)}{\Pn'_{y,y+2\Od(\textbf{a}),\E(\textbf{a})}\Pn'_{z+\E(\textbf{a})-1,z+\E(\textbf{a})-1,\Od(\textbf{a})}}\notag\\
&\times \prod_{i=2}^{k}\frac{\T(x+z+\e_{i}(\textbf{a})+1,a_{2i-2}+\od_{i}(\textbf{a})-1,\od_{i}(\textbf{a}))}{\T(x+y+\e_{i}(\textbf{a})+1,a_{2i-2}+\od_{i}(\textbf{a})-1,\od_{i}(\textbf{a}))}\frac{\T(y+\e_{i}(\textbf{a})+1,a_{2i-2}+\od_{i}(\textbf{a})-1,\od_{i}(\textbf{a}))}{\T(z+\e_{i}(\textbf{a})+1,a_{2i-2}+\od_{i}(\textbf{a})-1,\od_{i}(\textbf{a}))}\notag\\
&\times \prod_{i=2}^{k}\frac{\T(x+y+\s_{2i-1}(\textbf{a})+\s_{2k}(\textbf{a}),a_{2i-2}+\od_{i}(\textbf{a})-1,\od_{i}(\textbf{a}))}{\T(x+z+\s_{2i-1}(\textbf{a})+\s_{2k}(\textbf{a}),a_{2i-2}+\od_{i}(\textbf{a})-1,\od_{i}(\textbf{a}))}\notag\\
&\times \prod_{i=2}^{k}\frac{\T(z+\s_{2i-1}(\textbf{a})+\s_{2k}(\textbf{a}),a_{2i-2}+\od_{i}(\textbf{a})-1,\od_{i}(\textbf{a}))}{\T(y+\s_{2i-1}(\textbf{a})+\s_{2k}(\textbf{a}),a_{2i-2}+\od_{i}(\textbf{a})-1,\od_{i}(\textbf{a}))}.
\end{align}
\end{thm}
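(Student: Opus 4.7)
The plan is to prove Theorem \ref{main4} by induction jointly with Theorems \ref{main1}--\ref{main3}, using the variant of Kuo's graphical condensation promised in Section \ref{Background}. The weighted odd-level region $H^{(4)}_{x,y,z}(\mathbf{a})$ sits naturally at the intersection of two modifications of $H^{(1)}$: the half-step western zigzag (shared with $H^{(2)}$) and the $1/2$-weighting of the vertical lozenges on that zigzag (shared with $H^{(3)}$). Because the weight-$1/2$ lozenges lie on the west side, away from the boundary unit triangles that will be chosen for condensation, the weighted tiling numbers will satisfy exactly the same 5-term Kuo relation as in the unweighted case, with the $2^{a_1-1}$ factor in the statement being accounted for by the asymmetric count of weighted vertical lozenges in the five regions produced by the condensation.

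First I would handle the base case $k \le 2$, i.e.\ the formula for $\M(H^{(4)}_{x,y,z}(a,b))$. The idea is to locate forced lozenges along the western zigzag, along the northeastern side, and along the straight edges adjacent to the removed $a$- and $b$-triangles; removing these forced lozenges decomposes the region into an ordinary weighted halved hexagon (contributing the three $\Pn'$ factors), a region of type $\mathcal{K}'$ (contributing the $\K'(a,b,x,y+z)$ factor via \eqref{QARd}), and a collection of parallelogram pieces whose tilings are counted by the ratio of trapezoidal products $\T$ in the last line of the base-case formula. Matching all these factors and simplifying reduces the base case to Lemma \ref{QAR} and the weighted Proctor--Ciucu formula.

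For the inductive step I would apply Kuo condensation, choosing four boundary unit triangles: two near the top-right corner of the pentagon and two on the southern side just to the left and right of the array of holes. With this choice, the five regions in the Kuo identity are two copies of a region of type $H^{(4)}$ with the parameter sequence $\mathbf{a}$ shifted (either by decrementing $x$, $y$, or $z$, or by altering the first and last entries of $\mathbf{a}$), together with companions of type $H^{(3)}$ and $H^{(2)}$, each with strictly smaller size. One then substitutes the inductive formulas of Theorems \ref{main1}--\ref{main3} for these companions and the formula of Theorem \ref{main4} for the $H^{(4)}$ pieces, and verifies that the resulting identity reduces, after factoring out the common $\Q'$-, $\K'$-, and $\Pn'$-type factors, to a purely rational identity in $x,y,z,a_1,\dots,a_{2k}$.

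The main obstacle will be the algebraic verification of the Kuo recurrence. The claimed formula is a long alternating product of $\T$-factors, $\K'$-factors, and weighted halved-hexagon numbers, and collapsing the 5-term identity to an elementary cross-multiplication will require repeated use of the shift identities $(x)_{n+1}=(x+n)(x)_n$, $\T(x,n,m)/\T(x+1,n,m) = (x)_m / (x+n-m+1)_m$, and the analogous relations for $\Hf$ and $\Hf_2$; all the nontrivial number-theoretic information has already been packaged into Lemma \ref{QAR}, so what remains is bookkeeping. A secondary difficulty is verifying that the Kuo condensation chosen for $H^{(4)}$ really does produce companions lying inside the four families $H^{(1)},\dots,H^{(4)}$ and not a new family, which is why it is cleanest to prove all four theorems simultaneously: the recurrence for $H^{(4)}$ naturally involves $H^{(3)}$, the recurrence for $H^{(3)}$ involves $H^{(4)}$ and $H^{(1)}$, and so on, and the whole system closes.
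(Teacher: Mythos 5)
Your proposal diverges from the paper's method in two places, and both are genuine gaps rather than harmless alternatives. First, your base case is wrong. You take the base case to be $k\le 2$ and claim that $\M(H^{(4)}_{x,y,z}(a,b))$ can be obtained directly by locating forced lozenges that decompose the region into a weighted halved hexagon, a $\mathcal{K}'$-piece, and parallelograms counted by the $\T$-ratios. For generic positive $x,y,z$ no such forced decomposition exists: the region is connected, nothing is forced, and the $\T$-ratios in the closed formula are not tiling numbers of any subregions --- they emerge only as algebraic artifacts of the recurrence verification. In the paper (proof of Theorem \ref{main1}, which the paper declares applies verbatim to Theorem \ref{main4}), the two-hole formula is itself proved by induction on $x+y+z$; the true base cases are $x=0$, $y=0$, and $z=0$, where the Region-splitting Lemma \ref{RS} genuinely applies and yields products of $\Pn'$-type and $\K'$-type tiling numbers (this splitting is also where the power of $2$ in the general-$k$ formula comes from, not from any ``asymmetric count of weighted lozenges'' in the condensation --- since $\textbf{a}$ is fixed throughout the recurrence, the factor $2^{a_1-1}$ is inert there).

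Second, your condensation scheme is geometrically confused and your cross-family induction is unnecessary and unsubstantiated. The array of holes in $H^{(4)}_{x,y,z}(\textbf{a})$ is attached to the \emph{western} staircase at level $2z+2\E(\textbf{a})-1$, not to the southern side, so choosing two of the four Kuo vertices ``on the southern side just to the left and right of the array of holes'' does not match the region. More importantly, four vertices chosen on the eastern corners (as in Figure \ref{MidarrayKuo1}) cannot alter the western boundary at all --- neither the half-step parity of the zigzag nor the $1/2$-weights --- so all five regions appearing in Kuo's identity (\ref{Kuoeq}) remain $H^{(4)}$-type with the \emph{same} hole sequence $\textbf{a}$ and parameters shifted to $(x,y-1,z-1)$, $(x,y-1,z)$, $(x,y,z-1)$, $(x+1,y-1,z-1)$, $(x-1,y,z)$, exactly as in the recurrence (\ref{recur1}). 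There is no coupling to $H^{(2)}$ or $H^{(3)}$ companions, and you give no argument that your point choice would produce such companions; a joint induction over the four families is therefore not a route that ``closes'' --- each family satisfies its own closed three-term recurrence, and the remaining work is the rational-function verification that the conjectured product satisfies it, carried out as for $\phi_{x,y,z}(a,b)$ and $\Phi_{x,y,z}(\textbf{a})$ in the proof of Theorem \ref{main1} (using the analogues of Claims \ref{claimP}--\ref{claimQ2} for $\Pn'$, $\K'$, and the invariance of the auxiliary factor $f_{x,y,z}(\textbf{a})$ under the four parameter shifts).
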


Finally, we focus on four new families of defected halved hexagons with `mixed' western boundary. In these families only a half of the lozenges along the western boundary (the ones above the array of holes or the ones below the array of holes) are weighted by $1/2$.

\begin{figure}\centering
\setlength{\unitlength}{3947sp}%
\begingroup\makeatletter\ifx\SetFigFont\undefined%
\gdef\SetFigFont#1#2#3#4#5{%
  \reset@font\fontsize{#1}{#2pt}%
  \fontfamily{#3}\fontseries{#4}\fontshape{#5}%
  \selectfont}%
\fi\endgroup%
\resizebox{13cm}{!}{
\begin{picture}(0,0)%
\includegraphics{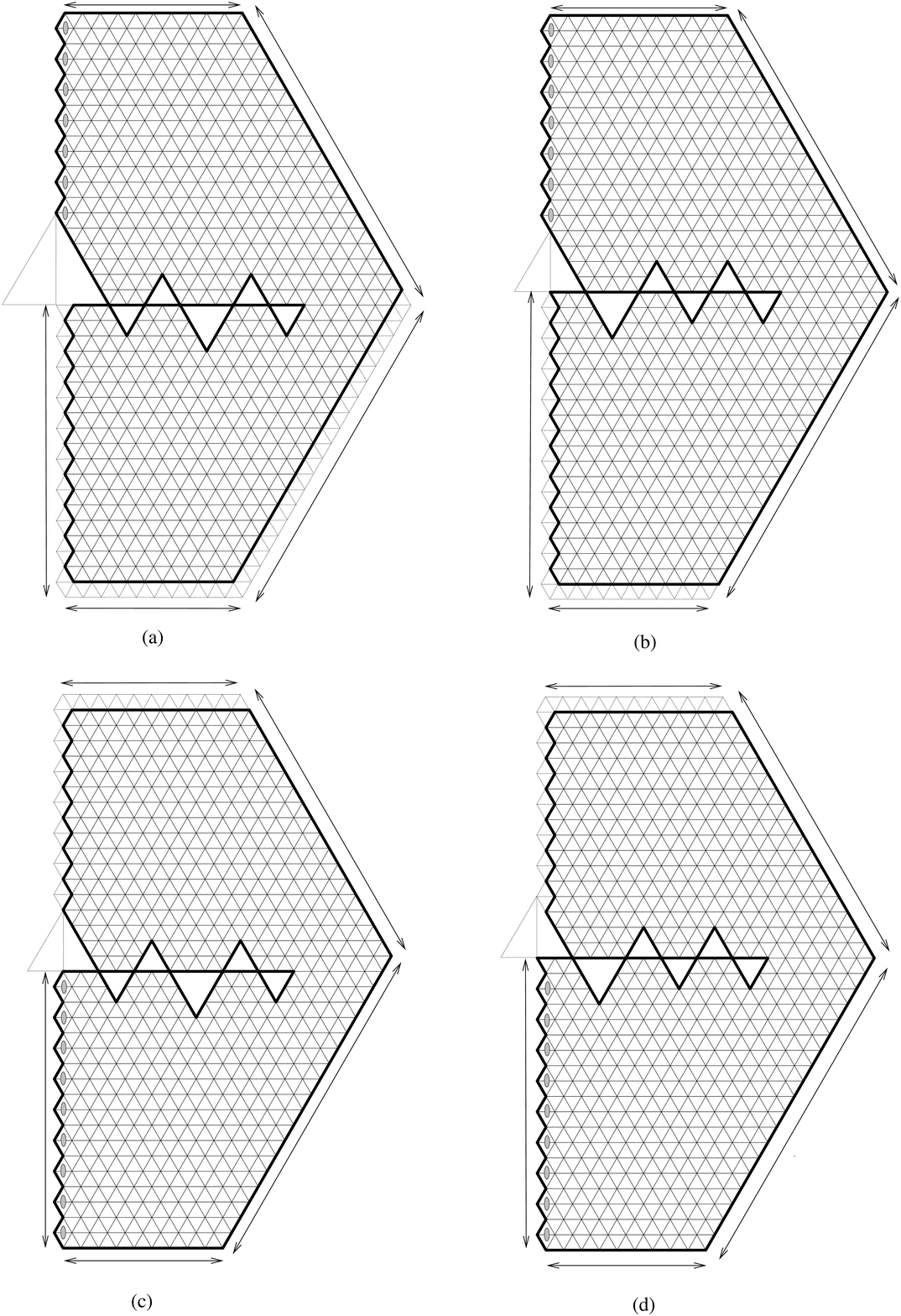}%
\end{picture}%
%
%

\begin{picture}(19811,29362)(3845,-28553)
\put(18927,-6364){\makebox(0,0)[lb]{\smash{{\SetFigFont{20}{24.0}{\rmdefault}{\mddefault}{\updefault}{$a_4$}%
}}}}
\put(19677,-5959){\makebox(0,0)[lb]{\smash{{\SetFigFont{20}{24.0}{\rmdefault}{\mddefault}{\updefault}{$a_5$}%
}}}}
\put(20532,-6364){\makebox(0,0)[lb]{\smash{{\SetFigFont{20}{24.0}{\rmdefault}{\mddefault}{\updefault}{$a_6$}%
}}}}
\put(5584,-6205){\makebox(0,0)[lb]{\smash{{\SetFigFont{20}{24.0}{\rmdefault}{\mddefault}{\updefault}{$a_1$}%
}}}}
\put(6499,-6595){\makebox(0,0)[lb]{\smash{{\SetFigFont{20}{24.0}{\rmdefault}{\mddefault}{\updefault}{$a_2$}%
}}}}
\put(7249,-6235){\makebox(0,0)[lb]{\smash{{\SetFigFont{20}{24.0}{\rmdefault}{\mddefault}{\updefault}{$a_3$}%
}}}}
\put(8149,-6730){\makebox(0,0)[lb]{\smash{{\SetFigFont{20}{24.0}{\rmdefault}{\mddefault}{\updefault}{$a_4$}%
}}}}
\put(9214,-6235){\makebox(0,0)[lb]{\smash{{\SetFigFont{20}{24.0}{\rmdefault}{\mddefault}{\updefault}{$a_5$}%
}}}}
\put(10039,-6580){\makebox(0,0)[lb]{\smash{{\SetFigFont{20}{24.0}{\rmdefault}{\mddefault}{\updefault}{$a_6$}%
}}}}
\put(5793,-14451){\makebox(0,0)[lb]{\smash{{\SetFigFont{20}{24.0}{\rmdefault}{\mddefault}{\updefault}{$x+a_2+a_4+a_6$}%
}}}}
\put(16405,-14533){\makebox(0,0)[lb]{\smash{{\SetFigFont{20}{24.0}{\rmdefault}{\mddefault}{\updefault}{$x+a_2+a_4+a_6$}%
}}}}
\put(10332,-15655){\rotatebox{300.0}{\makebox(0,0)[lb]{\smash{{\SetFigFont{20}{24.0}{\rmdefault}{\mddefault}{\updefault}{$y+z+2a_1+2a_3+2a_5$}%
}}}}}
\put(4615,-25948){\rotatebox{90.0}{\makebox(0,0)[lb]{\smash{{\SetFigFont{20}{24.0}{\rmdefault}{\mddefault}{\updefault}{$2z+2a_2+2a_4+2a_6$}%
}}}}}
\put(15235,-25693){\rotatebox{90.0}{\makebox(0,0)[lb]{\smash{{\SetFigFont{20}{24.0}{\rmdefault}{\mddefault}{\updefault}{$2z-1+2a_2+2a_4+2a_6$}%
}}}}}
\put(21069,-15823){\rotatebox{300.0}{\makebox(0,0)[lb]{\smash{{\SetFigFont{20}{24.0}{\rmdefault}{\mddefault}{\updefault}{$y+z-1+2a_1+2a_3+2a_5$}%
}}}}}
\put(20771,-26368){\rotatebox{60.0}{\makebox(0,0)[lb]{\smash{{\SetFigFont{20}{24.0}{\rmdefault}{\mddefault}{\updefault}{$y+z-1+2a2+2a_4+2a_6$}%
}}}}}
\put(16383,-27906){\makebox(0,0)[lb]{\smash{{\SetFigFont{20}{24.0}{\rmdefault}{\mddefault}{\updefault}{$x+a_1+a_3+a5$}%
}}}}
\put(6295,-21246){\makebox(0,0)[lb]{\smash{{\SetFigFont{20}{24.0}{\rmdefault}{\mddefault}{\updefault}{$a_2$}%
}}}}
\put(16780,-21021){\makebox(0,0)[lb]{\smash{{\SetFigFont{20}{24.0}{\rmdefault}{\mddefault}{\updefault}{$a_2$}%
}}}}
\put(7030,-20871){\makebox(0,0)[lb]{\smash{{\SetFigFont{20}{24.0}{\rmdefault}{\mddefault}{\updefault}{$a_3$}%
}}}}
\put(17830,-20571){\makebox(0,0)[lb]{\smash{{\SetFigFont{20}{24.0}{\rmdefault}{\mddefault}{\updefault}{$a_3$}%
}}}}
\put(7930,-21381){\makebox(0,0)[lb]{\smash{{\SetFigFont{20}{24.0}{\rmdefault}{\mddefault}{\updefault}{$a_4$}%
}}}}
\put(18700,-20961){\makebox(0,0)[lb]{\smash{{\SetFigFont{20}{24.0}{\rmdefault}{\mddefault}{\updefault}{$a_4$}%
}}}}
\put(8995,-20856){\makebox(0,0)[lb]{\smash{{\SetFigFont{20}{24.0}{\rmdefault}{\mddefault}{\updefault}{$a_5$}%
}}}}
\put(19375,-20556){\makebox(0,0)[lb]{\smash{{\SetFigFont{20}{24.0}{\rmdefault}{\mddefault}{\updefault}{$a_5$}%
}}}}
\put(20230,-20961){\makebox(0,0)[lb]{\smash{{\SetFigFont{20}{24.0}{\rmdefault}{\mddefault}{\updefault}{$a_6$}%
}}}}
\put(9790,-21246){\makebox(0,0)[lb]{\smash{{\SetFigFont{20}{24.0}{\rmdefault}{\mddefault}{\updefault}{$a_6$}%
}}}}
\put(10051,-26184){\rotatebox{60.0}{\makebox(0,0)[lb]{\smash{{\SetFigFont{20}{24.0}{\rmdefault}{\mddefault}{\updefault}{$y+z+2a_2+2a_4+2a_6$}%
}}}}}
\put(5508,-27779){\makebox(0,0)[lb]{\smash{{\SetFigFont{20}{24.0}{\rmdefault}{\mddefault}{\updefault}{$x+a_1+a_3+a5$}%
}}}}
\put(15290,-5609){\rotatebox{60.0}{\makebox(0,0)[lb]{\smash{{\SetFigFont{20}{24.0}{\rmdefault}{\mddefault}{\updefault}{$2a_1$}%
}}}}}
\put(14991,-20226){\rotatebox{60.0}{\makebox(0,0)[lb]{\smash{{\SetFigFont{20}{24.0}{\rmdefault}{\mddefault}{\updefault}{$2a_1$}%
}}}}}
\put(4596,-20526){\rotatebox{60.0}{\makebox(0,0)[lb]{\smash{{\SetFigFont{20}{24.0}{\rmdefault}{\mddefault}{\updefault}{$2a_1$}%
}}}}}
\put(16486,419){\makebox(0,0)[lb]{\smash{{\SetFigFont{20}{24.0}{\rmdefault}{\mddefault}{\updefault}{$x+a_2+a_4+a_6$}%
}}}}
\put(21102,-1219){\rotatebox{300.0}{\makebox(0,0)[lb]{\smash{{\SetFigFont{20}{24.0}{\rmdefault}{\mddefault}{\updefault}{$y+z+2a_1+2a_3+2a_5$}%
}}}}}
\put(16377,-13384){\makebox(0,0)[lb]{\smash{{\SetFigFont{20}{24.0}{\rmdefault}{\mddefault}{\updefault}{$x+a_1+a_3+a5$}%
}}}}
\put(5608,444){\makebox(0,0)[lb]{\smash{{\SetFigFont{20}{24.0}{\rmdefault}{\mddefault}{\updefault}{$x+a_2+a_4+a_6$}%
}}}}
\put(15245,-11652){\rotatebox{90.0}{\makebox(0,0)[lb]{\smash{{\SetFigFont{20}{24.0}{\rmdefault}{\mddefault}{\updefault}{$2z+2a_2+2a_4+2a_6$}%
}}}}}
\put(21117,-11554){\rotatebox{60.0}{\makebox(0,0)[lb]{\smash{{\SetFigFont{20}{24.0}{\rmdefault}{\mddefault}{\updefault}{$y+z+2a_2+2a_4+2a_6$}%
}}}}}
\put(6031,-13336){\makebox(0,0)[lb]{\smash{{\SetFigFont{20}{24.0}{\rmdefault}{\mddefault}{\updefault}{$x+1+a_1+a_3+a_5$}%
}}}}
\put(10669,-11747){\rotatebox{60.0}{\makebox(0,0)[lb]{\smash{{\SetFigFont{20}{24.0}{\rmdefault}{\mddefault}{\updefault}{$y+z+2a_2+2a_4+2a_6$}%
}}}}}
\put(10473,-932){\rotatebox{300.0}{\makebox(0,0)[lb]{\smash{{\SetFigFont{20}{24.0}{\rmdefault}{\mddefault}{\updefault}{$y+z+1+2a_1+2a_3+2a_5$}%
}}}}}
\put(4699,-11852){\rotatebox{90.0}{\makebox(0,0)[lb]{\smash{{\SetFigFont{20}{24.0}{\rmdefault}{\mddefault}{\updefault}{$2z+1+2a_2+2a_4+2a_6$}%
}}}}}
\put(4164,-5615){\rotatebox{60.0}{\makebox(0,0)[lb]{\smash{{\SetFigFont{20}{24.0}{\rmdefault}{\mddefault}{\updefault}{$2a_1+2$}%
}}}}}
\put(16047,-5944){\makebox(0,0)[lb]{\smash{{\SetFigFont{20}{24.0}{\rmdefault}{\mddefault}{\updefault}{$a_1$}%
}}}}
\put(17067,-6439){\makebox(0,0)[lb]{\smash{{\SetFigFont{20}{24.0}{\rmdefault}{\mddefault}{\updefault}{$a_2$}%
}}}}
\put(18072,-5959){\makebox(0,0)[lb]{\smash{{\SetFigFont{20}{24.0}{\rmdefault}{\mddefault}{\updefault}{$a_3$}%
}}}}
\end{picture}}
\caption{The four mixed boundary halved hexagons: (a) $H^{(5)}_{3,3,2}(2,2,2,3,2,2)$,  (b)  $H^{(6)}_{3,3,3}(2,3,2,2,2,2)$, (c) $H^{(7)}_{3,3,2}(2,2,2,3,2,2)$, and (d) $H^{(8)}_{3,3,3}(2,3,2,2,2,2)$.} \label{middlehole3}
\end{figure}

The first `mixed boundary' region $H^{(5)}_{x,y,z}(a_1,a_2,\dotsc,a_k)$ is obtained from the weighted region\\  $H^{(4)}_{x,y,z+1}(a_1+1,a_2,\dotsc,a_k)$ by removing all unit triangles running along the southern and the southeastern sides, as well as the portion of the western boundary below the array of holes (see Figure \ref{middlehole3}(a) for the case $x=3,y=3,z=2,k=6,a_1=2,a_2=2,a_3=2,a_4=3,a_5=2,a_6=2$; the dotted triangles indicate the triangles removed; the lozenges with shaded cores are weighted by $1/2$ as usual). We can see that that only the lozenges above the array of holes and along the western boundary side are weighted in the region $H^{(5)}_{x,y,z}(a_1,\dotsc,a_k)$. The second mixed boundary region $H^{(6)}_{x,y,z}(a_1,a_2,\dotsc,a_k)$ is created from $H^{(3)}_{x,y,z}(a_1,a_2,\dotsc,a_k)$ by removing all unit triangles running along the southern side and the western side below the array of holes (see Figure \ref{middlehole3}(b) for $x=3,y=3,z=3,k=6,a_1=2,a_2=3,a_3=2,a_4=2,a_5=2,a_6=2$).

Similarly, the region $H^{(7)}_{x,y,z}(a_1,a_2,\dotsc,a_k)$ is obtained from the region $H^{(3)}_{x,y,z}(a_1,a_2,\dotsc,a_k)$ by removing all unit triangles running along the \emph{northern} side and the western side \emph{above} the holes (shown in Figure \ref{middlehole3}(c) when $x=3,y=3,z=2,k=6,a_1=2,a_2=2,a_3=2,a_4=3,a_5=2,a_6=2$).  Finally, the region $H^{(8)}_{x,y,z}(a_1,a_2,\dotsc,a_k)$ is the remaining  by removing the unit triangles running along the \emph{northern} side and the western side \emph{above} the holes in $H^{(4)}_{x,y,z}(a_1,a_2,\dotsc,a_k)$  (illustrated in Figure \ref{middlehole3}(d) for $x=3,y=3,z=3,k=6,a_1=2,a_2=3,a_3=2,a_4=2,a_5=2,a_6=2$).

The tiling numbers of the above four regions with mixed boundary are all given by simple product formulas.

\begin{thm}\label{main5}
Assume that $x,y,z,k$ are non-negative integers and  $\textbf{a}=(a_1,a_2,\dotsc,a_k)$ is a sequence of $k$ non-negative integers. The number of tilings of the defected halved hexagon  $H^{(5)}_{x,y,z}(a_1,a_2,\dotsc,a_k)$ is given by
\begin{align}\label{main5eq1}
\M&(H^{(5)}_{x,y,z}(a,b))=\frac{\Pn'_{y,y+2a+1,b}\Pn_{z+b,z+b,a}\Q(a,b,x,y+z)}{\Pn_{y+z+b,y+z+b,a}}\frac{\V(2a+2b+3,y+z-1,y)}{\V(2x+2a+2b+3,y+z-1,y)}\notag\\
&\frac{\T(x+b+1,y+z+2a,y)\T(2a+b+2,y+b-1,b)\T(z+1,y+b-1,b)}{\T(b+1,y+z+2a,y)\T(x+2a+b+2,y+b-1,b)\T(x+z+1,y+b-1,b)},
\end{align}
and for $k\geq2$
\begin{align}\label{main5eq2}
\M&(H^{(5)}_{x,y,z}(a_1,a_2,\dotsc,a_{2k}))=2^{a_1}\M(H^{(5)}_{x,y,z}(\Od(\textbf{a}),\E(\textbf{a})))\frac{\K'(0,a_1+1,a_2,\dotsc,a_{2k},y)\Q(a_1,\dotsc,a_{2k}+z)}{\Pn'_{y,y+2\Od(\textbf{a})+1,\E(\textbf{a})}\Pn_{z+\E(\textbf{a}),z+\E(\textbf{a}),\Od(\textbf{a})}}\notag\\
&\times \prod_{i=2}^{k}\frac{\T(x+z+\e_{i}(\textbf{a})+1,a_{2i-2}+\od_{i}(\textbf{a})-1,\od_{i}(\textbf{a}))}{\T(x+y+\e_{i}(\textbf{a})+1,a_{2i-2}+\od_{i}(\textbf{a})-1,\od_{i}(\textbf{a}))}\frac{\T(y+\e_{i}(\textbf{a})+1,a_{2i-2}+\od_{i}(\textbf{a})-1,\od_{i}(\textbf{a}))}{\T(z+\e_{i}(\textbf{a})+1,a_{2i-2}+\od_{i}(\textbf{a})-1,\od_{i}(\textbf{a}))}\notag\\
&\times \prod_{i=2}^{k}\frac{\T(x+y+\s_{2i-1}(\textbf{a})+\s_{2k}(\textbf{a})+2,a_{2i-2}+\od_{i}(\textbf{a})-1,\od_{i}(\textbf{a}))}{\T(x+z+\s_{2i-1}(\textbf{a})+\s_{2k}(\textbf{a})+2,a_{2i-2}+\od_{i}(\textbf{a})-1,\od_{i}(\textbf{a}))}\notag\\
&\times \prod_{i=2}^{k}\frac{\T(z+\s_{2i-1}(\textbf{a})+\s_{2k}(\textbf{a})+2,a_{2i-2}+\od_{i}(\textbf{a})-1,\od_{i}(\textbf{a}))}{\T(y+\s_{2i-1}(\textbf{a})+\s_{2k}(\textbf{a})+2,a_{2i-2}+\od_{i}(\textbf{a})-1,\od_{i}(\textbf{a}))}.
\end{align}
\end{thm}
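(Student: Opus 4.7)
The plan is to prove Theorem \ref{main5} in two stages: first establish the base case formula \eqref{main5eq1} for a single pair of holes $(a,b)$, then handle the general case \eqref{main5eq2} by induction on the total parameters, driven by Kuo condensation. The induction will run in parallel with the proofs of Theorems \ref{main1}--\ref{main4}, so those results and Lemma \ref{QAR} are available as black boxes.

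For the base case, the plan is to exploit the defining relationship that $H^{(5)}_{x,y,z}(a_1,\ldots,a_k)$ comes from $H^{(4)}_{x,y,z+1}(a_1+1,a_2,\ldots,a_k)$ by deleting all unit triangles along the southern and southeastern sides together with the western boundary below the holes. I would identify forced lozenges created by these deletions: the newly exposed down-pointing unit triangles along the south and southeast must be covered in a unique way, peeling off a thin band of lozenges. What remains decomposes naturally into a weighted piece above the holes (a $\Pn'$-type halved hexagon together with a $\K'$-type trapezoid that records the weighted western boundary) and an unweighted piece below the holes (a $\Pn$-type halved hexagon together with a $\Q$-type trapezoid). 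Applying Proctor's Theorem \ref{Proctiling}, its weighted refinement, and Lemma \ref{QAR} to each of these pieces and simplifying the resulting Pochhammer/hyperfactorial products should match the right-hand side of \eqref{main5eq1}, after grouping into the $\T$ and $\V$ factors.

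For the inductive step establishing \eqref{main5eq2}, the plan is to apply the version of Kuo condensation stated in Section \ref{Background} to the planar dual graph of $H^{(5)}_{x,y,z}(\textbf{a})$. The four distinguished boundary vertices should be chosen (as in the companion Theorems \ref{main1}--\ref{main4}) so that the six regions arising in Kuo's identity are again $H^{(5)}$-type regions with parameters shifted in controlled ways: typically $(x\pm 1,y\mp 1,z)$, $(x,y\pm 1,z\mp 1)$, or one shift of a single $a_i$ by $\pm 1$. Then one inducts on, say, $x+y+z+\s_{2k}(\textbf{a})$, substitutes the conjectured product formula into the Kuo recurrence, and reduces the identity to a purely algebraic identity among $\T$, $\V$, $\Pn$, $\Pn'$, $\Q$, and $\K'$ products. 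The same recurrence has to be checked for the right-hand side of \eqref{main5eq2}; this is where the already-proven recurrences for $\Q$ and $\K'$ from Lemma \ref{QAR}, together with the base case, handle the bookkeeping.

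The main obstacle will be two-fold. First, locating a four-vertex choice for Kuo condensation on $H^{(5)}$ that produces only $H^{(5)}$-type sub-regions is delicate: the asymmetric western boundary (weighted above the holes, unweighted below) means a generic choice will yield mixed-type regions that fall outside the induction. I expect the correct choice is dictated by placing two distinguished vertices near the top-left of the region and two near the array of holes, so that the forced-lozenge bands after condensation preserve the mixed-boundary structure. Second, even once the recurrence matches, verifying the algebraic simplification of the ratios of $\T$, $\V$, and hyperfactorial products is lengthy; I would handle this in the same spirit as the analogous verification in the author's previous paper \cite{Lai4}, reducing the entire identity to a balanced hypergeometric-type cancellation after writing everything in Pochhammer symbols.
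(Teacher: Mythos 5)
Your Stage 1 — proving (\ref{main5eq1}) directly by peeling forced lozenges off $H^{(4)}_{x,y,z+1}(a_1+1,a_2,\dotsc,a_k)$ and then splitting the remainder into a weighted piece above the holes and an unweighted piece below — contains the genuine gap. First, the relation between $H^{(5)}$ and $H^{(4)}$ is purely definitional: the deleted boundary triangles do \emph{not} correspond to forced lozenges. If they did, $\M(H^{(5)}_{x,y,z}(\textbf{a}))$ would agree (up to weight normalization) with $\M(H^{(4)}_{x,y,z+1}(a_1+1,a_2,\dotsc,a_k))$, contradicting the distinct product formulas of Theorems \ref{main4} and \ref{main5}. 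Second, and more fundamentally, the claimed decomposition along the level of the holes does not exist when $x,y,z>0$: the hole array spans only part of the region's width at that level, so lozenges cross the horizontal line through the holes and the Separating Condition of Lemma \ref{RS} fails. The splitting you describe is exactly what the paper uses, but \emph{only} in the degenerate cases; e.g.\ at $x=0$ one gets $\M(H^{(5)}_{0,y,z}(a_1,\dotsc,a_{2k}))=2^{a_1}\M(\mathcal{K}'(0,a_1+1,a_2,\dotsc,a_{2k},y))\M(\mathcal{Q}(a_1,a_2,\dotsc,a_{2k}+z))$, with analogous factorizations at $y=0$ and $z=0$. Consequently (\ref{main5eq1}) itself must be proved by induction on $x+y+z$ via Kuo condensation — it is not a base case one can dispatch by region-splitting — with the three degenerate situations serving as the base cases, and with the recurrence
\begin{align*}
\M(H^{(5)}_{x,y,z}(\textbf{a}))\M(H^{(5)}_{x,y-1,z-1}(\textbf{a}))&=\M(H^{(5)}_{x,y-1,z}(\textbf{a}))\M(H^{(5)}_{x,y,z-1}(\textbf{a}))\\
&\quad+\M(H^{(5)}_{x+1,y-1,z-1}(\textbf{a}))\M(H^{(5)}_{x-1,y,z}(\textbf{a}))
\end{align*}
verified against the conjectured product using ratio identities for $\T$, $\V$, $\Pn$, $\Pn'$, and $\Q$.

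Your Stage 2 is right in spirit but off in two specifics. The working choice of the four Kuo vertices is \emph{not} near the top-left and the holes: as in Figure \ref{Kuomidhole3}, they sit in pairs at the upper-right and lower-right corners, so that all six regions are $H^{(5)}$-type with only $x,y,z$ shifted (never any $a_i$), which is why induction on $x+y+z$ with $\textbf{a}$ fixed suffices. And the reduction of (\ref{main5eq2}) to the two-hole recurrence does not go through ``recurrences for $\Q$ and $\K'$ from Lemma \ref{QAR}'' (that lemma gives closed formulas, not recurrences); rather, one writes the right-hand side of (\ref{main5eq2}) as $\psi_{x,y,z}(\Od(\textbf{a}),\E(\textbf{a}))\cdot f_{x,y,z}(\textbf{a})$ and observes that every factor of $f$ — including $\K'(0,a_1+1,\dotsc,a_{2k},y)$ and $\Q(a_1,\dotsc,a_{2k}+z)$ — depends on $(x,y,z)$ only through the quadruple $(y,z,x+y,x+z)$, so $f$ cancels identically in each Kuo ratio and the verification collapses to the already-established two-hole identity.
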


\begin{thm}\label{main6}
Assume that $x,y,z,k$ are non-negative integers and  $\textbf{a}=(a_1,a_2,\dotsc,a_k)$ is a sequence of $k$ non-negative integers. The number of tilings of the defected halved hexagon  $H^{(6)}_{x,y,z}(a_1,a_2,\dotsc,a_k)$ is given by
\begin{align}
\M&(H^{(6)}_{x,y,z}(a,b))=\frac{\Pn'_{y,y+2a,b}\Pn_{z+b-1,z+b-1,a}\K(a,b,x,y+z)}{\Pn_{y+z+b-1,y+z+b-1,a}}\frac{\V(2a+2b+3,y+z-2,y)}{\V(2x+2a+2b+3,y+z-2,y)}\notag\\
&\frac{\T(x+b+1,y+z+2a-1,y)\T(2a+b+1,y+b-1,b)\T(z+1,y+b-1,b)}{\T(b+1,y+z+2a-1,y)\T(x+2a+b+1,y+b-1,b)\T(x+z+1,y+b-1,b)},
\end{align}
and for $k\geq 2$
\begin{align}
\M&(H^{(6)}_{x,y,z}(a_1,a_2,\dotsc,a_{2k}))=2^{a_1}\M(H^{(6)}_{x,y,z}(\Od(\textbf{a}),\E(\textbf{a})))\frac{\Q'(0,a_1,a_2,\dotsc,a_{2k},y)\K(a_1,\dotsc,a_{2k}+z)}{\Pn'_{y,y+2\Od(\textbf{a}),\E(\textbf{a})}\Pn_{z+\E(\textbf{a})-1,z+\E(\textbf{a})-1,\Od(\textbf{a})}}\notag\\
&\times \prod_{i=2}^{k}\frac{\T(x+z+\e_{i}(\textbf{a})+1,a_{2i-2}+\od_{i}(\textbf{a})-1,\od_{i}(\textbf{a}))}{\T(x+y+\e_{i}(\textbf{a})+1,a_{2i-2}+\od_{i}(\textbf{a})-1,\od_{i}(\textbf{a}))}\frac{\T(y+\e_{i}(\textbf{a})+1,a_{2i-2}+\od_{i}(\textbf{a})-1,\od_{i}(\textbf{a}))}{\T(z+\e_{i}(\textbf{a})+1,a_{2i-2}+\od_{i}(\textbf{a})-1,\od_{i}(\textbf{a}))}\notag\\
&\times \prod_{i=2}^{k}\frac{\T(x+y+\s_{2i-1}(\textbf{a})+\s_{2k}(\textbf{a})+1,a_{2i-2}+\od_{i}(\textbf{a})-1,\od_{i}(\textbf{a}))}{\T(x+z+\s_{2i-1}(\textbf{a})+\s_{2k}(\textbf{a})+1,a_{2i-2}+\od_{i}(\textbf{a})-1,\od_{i}(\textbf{a}))}\notag\\
&\times \prod_{i=2}^{k}\frac{\T(z+\s_{2i-1}(\textbf{a})+\s_{2k}(\textbf{a})+1,a_{2i-2}+\od_{i}(\textbf{a})-1,\od_{i}(\textbf{a}))}{\T(y+\s_{2i-1}(\textbf{a})+\s_{2k}(\textbf{a})+1,a_{2i-2}+\od_{i}(\textbf{a})-1,\od_{i}(\textbf{a}))}.
\end{align}
\end{thm}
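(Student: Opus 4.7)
The plan is to prove Theorem \ref{main6} jointly with Theorem \ref{main5} by a double induction on the number of holes $2k$ and on the total hole-size $\s_{2k}(\textbf{a})$, using the halved-hexagon version of Kuo's graphical condensation stated in Section \ref{Background} and already exploited in the earlier paper \cite{Lai4}. The base case is the two-parameter formula for $\M(H^{(6)}_{x,y,z}(a,b))$; the inductive step reduces each $H^{(6)}$-region with $k\geq 2$ to a condensation identity involving five smaller $H^{(5)}$- or $H^{(6)}$-regions, so that the desired formula can be read off once one has Theorem \ref{main5} and the induction hypothesis for $H^{(6)}$.

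For the base case, only one up-pointing half-triangle of side $2a$ and one down-pointing triangle of side $b$ have been cut from the western side of $H^{(6)}_{x,y,z}(a,b)$. This creates thin strips of unit triangles forcing long runs of lozenges both immediately above and immediately below the hole. After these forced lozenges are removed, the region splits into three natural pieces: an upper weighted halved hexagon matching $\mathcal{P}'_{y,y+2a,b}$; a middle trapezoid whose tilings are counted by a ratio of $\T$- and $\V$-products via a direct non-intersecting lattice-path argument, producing the factor involving $\V(2a+2b+3,y+z-2,y)$; and a lower region which, after one further layer of forcing, collapses onto a $\K$-type region $\mathcal{K}(a,b,x,y+z)$ multiplied by the correction $\Pn_{z+b-1,z+b-1,a}/\Pn_{y+z+b-1,y+z+b-1,a}$ accounting for the unweighted portion of the staircase. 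Multiplying these ingredients and simplifying using Lemma \ref{QAR} yields the stated two-parameter formula.

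For the inductive step with $k\geq 2$, I would apply Kuo condensation to $H^{(6)}_{x,y,z}(\textbf{a})$ with four distinguished vertices placed on the boundary so that each of the four subregions obtained by deleting a pair of them, after forced-lozenge reduction, is again an $H^{(5)}$- or $H^{(6)}$-region whose parameter vector differs from $\textbf{a}$ by a single unit shift in one $a_i$, a reduction of $x$ or $z$ by one, or a merger of two adjacent entries of $\textbf{a}$. The resulting condensation identity has the form
\[
\M(H^{(6)}_{x,y,z}(\textbf{a}))\cdot M_{1}=M_{2}\,M_{3}+M_{4}\,M_{5},
\]
where each $M_i$ is known either by Theorem \ref{main5} or by the induction hypothesis for Theorem \ref{main6}. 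Solving for $\M(H^{(6)}_{x,y,z}(\textbf{a}))$ and matching the candidate product formula then reduces to an algebraic identity among $\T$-, $\V$-, $\K$-, $\Q'$-, $\Pn$- and $\Pn'$-factors, which telescopes much as in \cite{Lai4}. The principal obstacle is the combinatorial one of placing the four Kuo vertices so that the forced lozenges after each deletion do not cross the horizontal strip containing the array of holes: such a crossing would shift the boundary between the weighted and unweighted portions of the staircase and spoil the identification with an $H^{(5)}$- or $H^{(6)}$-region. I expect that handling Theorems \ref{main5} and \ref{main6} together, and possibly coupling the induction to the families $H^{(7)}$ and $H^{(8)}$ introduced later, provides the cleanest framework for ensuring every condensed region stays within the intended family.
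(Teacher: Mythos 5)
Your proposal has two genuine gaps, and both stem from departing from the induction that actually makes this family of formulas tractable. First, your base-case argument for $\M(H^{(6)}_{x,y,z}(a,b))$ does not work: for generic positive $x,y,z$ there are \emph{no} forced lozenges above and below the hole, so the region does not split into an upper piece, a middle trapezoid, and a lower piece. A telltale sign is the factor $\Pn_{y+z+b-1,y+z+b-1,a}$ sitting in the \emph{denominator} of the stated formula; Region-splitting (Lemma \ref{RS}) can only ever produce products of tiling numbers of subregions, never quotients, so the two-parameter formula cannot be realized as a count of any decomposition of the region. In the paper the two-hole case is itself proved by induction on $x+y+z$: genuine splittings into $\mathcal{P}'$-, $\mathcal{Q}'$-, and $\mathcal{K}$-type pieces occur only in the degenerate base cases $x=0$, $y=0$, $z=0$ (where forcing really does happen, as in Figures \ref{Halfhex2basex2}--\ref{Halfhex2basez2} for the $H^{(5)}$ analogue), and Lemma \ref{QAR} finishes those cases; the quotient factors emerge from the algebra, not from geometry.

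Second, your inductive step relies on a Kuo condensation whose four deleted vertices sit near the holes, so as to shift a single $a_i$, merge adjacent entries of $\textbf{a}$, or mix $H^{(5)}$- with $H^{(6)}$-regions. The version of Kuo condensation quoted in the paper (Theorem \ref{Kuothm}) requires $u,v,w,s$ to appear in cyclic order on a \emph{single face} of the dual graph, which rules out distributing them between the outer boundary and the inner faces created by the holes; and even if one engineered such a placement, you would face exactly the boundary-crossing pathologies you flag, plus parity problems when merging entries of $\textbf{a}$. The paper avoids all of this: the four vertices are placed at the upper-right and lower-right corners, far from the array of holes, so the hole sequence $\textbf{a}$ is \emph{never} touched and one gets the clean recurrence
\begin{align*}
\M(H_{x,y,z}(\textbf{a}))\M(H_{x,y-1,z-1}(\textbf{a}))=\M(H_{x,y-1,z}(\textbf{a}))\M(H_{x,y,z-1}(\textbf{a}))+\M(H_{x+1,y-1,z-1}(\textbf{a}))\M(H_{x-1,y,z}(\textbf{a}))
\end{align*}
uniformly in $\textbf{a}$, with induction on $x+y+z$ rather than on $k$ or $\s_{2k}(\textbf{a})$. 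The general-$k$ formula then follows almost for free: writing the candidate as $\phi_{x,y,z}(\Od(\textbf{a}),\E(\textbf{a}))\cdot f_{x,y,z}(\textbf{a})$, the correction $f$ depends only on $y$, $z$, $x+y$, and $x+z$, so it cancels identically in the two ratio products of the recurrence, reducing the verification to the two-hole case already done. Your double induction on $(2k,\s_{2k}(\textbf{a}))$ would instead force you to prove infinitely many distinct condensation identities with shifting hole structures, which is precisely the complexity the paper's choice of vertices is designed to avoid.
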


\begin{thm}\label{main7}
Assume that $x,y,z,k$ are non-negative integers and  $\textbf{a}=(a_1,a_2,\dotsc,a_k)$ is a sequence of $k$ non-negative integers. The number of tilings of the defected halved hexagon  $H^{(7)}_{x,y,z}(a_1,a_2,\dotsc,a_k)$ is given by
\begin{align}
\M&(H^{(7)}_{x,y,z}(a,b))=\frac{\Pn_{y,y+2a-1,b}\Pn'_{z+b,z+b,a}\Q'(a,b,x,y+z)}{\Pn'_{y+z+b,y+z+b,a}}\frac{\V(2a+2b+1,y+z,y)}{\V(2x+2a+2b+1,y+z,y)}\notag\\
&\frac{\T(x+b+1,y+z+2a-1,y)\T(2a+b+1,y+b-1,b)\T(z+1,y+b-1,b)}{\T(b+1,y+z+2a-1,y)\T(x+2a+b+1,y+b-1,b)\T(x+z+1,y+b-1,b)},
\end{align}
and for $k\geq 2$
\begin{align}
\M&(H^{(7)}_{x,y,z}(a_1,a_2,\dotsc,a_{2k}))=\M(H^{(7)}_{x,y,z}(\Od(\textbf{a}),\E(\textbf{a})))\frac{\K(0,a_1,a_2,\dotsc,a_{2k},y)\Q'(a_1,\dotsc,a_{2k}+z)}{\Pn_{y,y+2\Od(\textbf{a})-1,\E(\textbf{a})}\Pn'_{z+\E(\textbf{a}),z+\E(\textbf{a}),\Od(\textbf{a})}}\notag\\
&\times \prod_{i=2}^{k}\frac{\T(x+z+\e_{i}(\textbf{a})+1,a_{2i-2}+\od_{i}(\textbf{a})-1,\od_{i}(\textbf{a}))}{\T(x+y+\e_{i}(\textbf{a})+1,a_{2i-2}+\od_{i}(\textbf{a})-1,\od_{i}(\textbf{a}))}\frac{\T(y+\e_{i}(\textbf{a})+1,a_{2i-2}+\od_{i}(\textbf{a})-1,\od_{i}(\textbf{a}))}{\T(z+\e_{i}(\textbf{a})+1,a_{2i-2}+\od_{i}(\textbf{a})-1,\od_{i}(\textbf{a}))}\notag\\
&\times \prod_{i=2}^{k}\frac{\T(x+y+\s_{2i-1}(\textbf{a})+\s_{2k}(\textbf{a})+1,a_{2i-2}+\od_{i}(\textbf{a})-1,\od_{i}(\textbf{a}))}{\T(x+z+\s_{2i-1}(\textbf{a})+\s_{2k}(\textbf{a})+1,a_{2i-2}+\od_{i}(\textbf{a})-1,\od_{i}(\textbf{a}))}\notag\\
&\times \prod_{i=2}^{k}\frac{\T(z+\s_{2i-1}(\textbf{a})+\s_{2k}(\textbf{a})+1,a_{2i-2}+\od_{i}(\textbf{a})-1,\od_{i}(\textbf{a}))}{\T(y+\s_{2i-1}(\textbf{a})+\s_{2k}(\textbf{a})+1,a_{2i-2}+\od_{i}(\textbf{a})-1,\od_{i}(\textbf{a}))}.
\end{align}
\end{thm}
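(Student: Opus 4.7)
My plan is to prove Theorem \ref{main7} by Kuo graphical condensation combined with induction, in parallel with the strategy of \cite{Lai4} and the proofs of Theorems \ref{main1}--\ref{main6} in this paper. Since $H^{(7)}_{x,y,z}(\textbf{a})$ is obtained from $H^{(3)}_{x,y,z}(\textbf{a})$ by removing unit triangles along the northern side and the upper portion of the western zigzag, the $k=1$ base case $\M(H^{(7)}_{x,y,z}(a,b))$ can be attacked by analyzing the lozenges that those excisions force. Each vertical lozenge forced along the western staircase carries weight $1/2$, which explains the extra factor $\V(2a+2b+1,y+z,y)/\V(2x+2a+2b+1,y+z,y)$ that distinguishes the $H^{(7)}$ base formula from the $H^{(3)}$ base formula of Theorem \ref{main3}. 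After all such forced lozenges are peeled off, the residual region is a weighted halved hexagon with triangular holes on its base, whose tiling number is supplied by Lemma \ref{QAR} together with the Proctor-type formulas for $\Pn$ and $\Pn'$, so the base case reduces to a direct product calculation.

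For the $k \geq 2$ inductive step, I would apply the appropriate variant of Kuo condensation (as collected in Section \ref{Background}) to the dual graph of $H^{(7)}_{x,y,z}(a_1,\dotsc,a_{2k})$. The four distinguished boundary vertices should be placed so that the five sub-regions appearing in the condensation identity are either of $H^{(7)}$-type with strictly smaller parameter data, or of $H^{(3)}$- and $H^{(5)}$-type already covered by Theorems \ref{main3} and \ref{main5}. In analogy with the choices used in \cite{Lai4}, natural positions are two vertices adjacent to the rightmost hole of side $a_{2k}$ and two vertices near the opposite corners of the pentagonal outline; this guarantees that in each sub-region either $a_{2k}$ decreases or one of $x,y,z$ shifts by one, so the induction hypothesis (and the already-proven neighboring theorems) applies to every term on the right-hand side of the recurrence.

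The main obstacle is the final algebraic verification that the product formula in the statement satisfies the Kuo recurrence just obtained. After substituting the formula into each of the five sub-regions and cancelling common factors, one is left with a balanced identity in which every $\T$, $\V$, $\Pn$, $\Pn'$, $\Q'$, $\K$ must be expanded via Lemma \ref{QAR} and the definitions of $\T$ and $\V$ into products of hyperfactorials and Pochhammer symbols. I would reduce the resulting identity using the elementary shifts $(x)_{n+1}=(x)_n(x+n)$ and $[x]_{n+1}=[x]_n(x+2n)$, grouping factors that involve $a_{2k}$ separately from those depending only on the smaller data. The extensive bookkeeping over the indices $\e_i(\textbf{a}),\od_i(\textbf{a}),\s_j(\textbf{a})$ is the bulk of the work, but each step becomes routine once the Kuo vertices are chosen correctly. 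Finally, the convention $\M(H^{(7)}_{x,y,z}(a_1,\dotsc,a_{2k-1}))=\M(H^{(7)}_{x,y,z}(a_1,\dotsc,a_{2k-1},0))$, in analogy with equation (\ref{main1eqx}), handles degenerate cases where the last hole size vanishes, closing the induction.
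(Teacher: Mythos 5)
Your plan breaks down at two essential points. First, the $k=1$ base case: $H^{(7)}_{x,y,z}(a,b)$ is \emph{not} obtained from $H^{(3)}_{x,y,z}(a,b)$ by peeling off forced lozenges. The unit triangles removed along the northern side and the upper western zigzag are all of one orientation, so their removal genuinely changes the set of tilings rather than fixing a layer of weighted lozenges; in particular, the factor $\V(2a+2b+1,y+z,y)/\V(2x+2a+2b+1,y+z,y)$ depends on $x$, whereas a collection of forced weight-$1/2$ lozenges could only contribute a monomial power of $1/2$. Consequently the two-hole formula does not follow from Theorem \ref{main3} by a ``direct product calculation.'' In the paper the two-hole case is itself proved by induction on $x+y+z$, exactly as in the proof of Theorem \ref{main5}: the genuine base cases are $x=0$, $y=0$, and $z=0$, where the Region-splitting Lemma \ref{RS} decomposes the region into a $\mathcal{K}$-type and a $\mathcal{Q}'$-type region evaluated by Lemma \ref{QAR} --- this is precisely why the factors $\K(0,a_1,a_2,\dotsc,a_{2k},y)$ and $\Q'(a_1,\dotsc,a_{2k}+z)$ appear in the statement. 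Your proposal never establishes these splittings, and without them the induction has nowhere to bottom out.

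Second, your Kuo condensation scheme is misplaced. Putting two of the four vertices next to the hole of side $a_{2k}$ would not produce $H^{(3)}$- or $H^{(5)}$-type regions: the eight families are distinguished by which half of the western boundary carries the weight $1/2$ and by the parity of the level at which the holes sit, and neither feature is altered by local surgery at the rightmost hole; moreover, a mixed-type recurrence would require the product formulas of Theorems \ref{main3} and \ref{main5} to satisfy a joint identity that you have not formulated. The paper instead places all four vertices $u,v,w,s$ at the northeastern and southeastern corners (as in Figures \ref{MidarrayKuo1} and \ref{Kuomidhole3}), which for every fixed sequence $\textbf{a}$ yields
\begin{align*}
\M(H^{(7)}_{x,y,z}(\textbf{a}))\M(H^{(7)}_{x,y-1,z-1}(\textbf{a}))&=\M(H^{(7)}_{x,y-1,z}(\textbf{a}))\M(H^{(7)}_{x,y,z-1}(\textbf{a}))\\
&\quad+\M(H^{(7)}_{x+1,y-1,z-1}(\textbf{a}))\M(H^{(7)}_{x-1,y,z}(\textbf{a})),
\end{align*}
with every term an $H^{(7)}$-type region carrying the \emph{same} hole sequence, so only $x+y+z$ decreases. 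Finally, for $k\geq 2$ the algebraic verification is not done by brute-force expansion into hyperfactorials over all the indices $\e_i(\textbf{a})$, $\od_i(\textbf{a})$, $\s_j(\textbf{a})$: one writes the conjectured formula as $\phi_{x,y,z}(\Od(\textbf{a}),\E(\textbf{a}))\cdot f_{x,y,z}(\textbf{a})$ and observes that $f$ depends on $x,y,z$ only through $y$, $z$, $x+y$, $x+z$, so all $f$-factors cancel in the recurrence and the identity collapses to the already-verified two-hole case. Your outline, as written, proves neither the base cases nor a usable recurrence.
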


\begin{thm}\label{main8}
Assume that $x,y,z,k$ are non-negative integers and  $\textbf{a}=(a_1,a_2,\dotsc,a_k)$ is a sequence of $k$ non-negative integers. The number of tilings of the defected halved hexagon  $H^{(8)}_{x,y,z}(a_1,a_2,\dotsc,a_k)$ is given by
\begin{align}
\M&(H^{(8)}_{x,y,z}(a,b))=\frac{\Pn_{y,y+2a-2,b}\Pn'_{z+b-1,y+b-1,a}\K'(a,b,x,y+z)}{\Pn'_{y+z+b-1,y+z+b-1,a}}\frac{\V(2a+2b+1,y+z-1,y)}{\V(2x+2a+2b+1,y+z-1,y)}\notag\\
&\frac{\T(x+b+1,y+z+2a-2,y)\T(2a+b,y+b-1,b)\T(z+1,y+b-1,b)}{\T(b+1,y+z+2a-2,y)\T(x+2a+b,y+b-1,b)\T(x+z+1,y+b-1,b)},
\end{align}
and for $k\geq 2$ 
\begin{align}
\M&(H^{(8)}_{x,y,z}(a_1,a_2,\dotsc,a_{2k}))=\M(H^{(8)}_{x,y,z}(\Od(\textbf{a}),\E(\textbf{a})))\frac{\Q(0,a_1-1,a_2,\dotsc,a_{2k},y)\K'(a_1,\dotsc,a_{2k}+z)}{\Pn_{y,y+2\Od(\textbf{a})-2,\E(\textbf{a})}\Pn'_{z+\E(\textbf{a})-1,y+\E(\textbf{a})-1,\Od(\textbf{a})}}\notag\\
&\times \prod_{i=2}^{k}\frac{\T(x+z+\e_{i}(\textbf{a})+1,a_{2i-2}+\od_{i}(\textbf{a})-1,\od_{i}(\textbf{a}))}{\T(x+y+\e_{i}(\textbf{a})+1,a_{2i-2}+\od_{i}(\textbf{a})-1,\od_{i}(\textbf{a}))}\frac{\T(y+\e_{i}(\textbf{a})+1,a_{2i-2}+\od_{i}(\textbf{a})-1,\od_{i}(\textbf{a}))}{\T(z+\e_{i}(\textbf{a})+1,a_{2i-2}+\od_{i}(\textbf{a})-1,\od_{i}(\textbf{a}))}\notag\\
&\times \prod_{i=2}^{k}\frac{\T(x+y+\s_{2i-1}(\textbf{a})+\s_{2k}(\textbf{a}),a_{2i-2}+\od_{i}(\textbf{a})-1,\od_{i}(\textbf{a}))}{\T(x+z+\s_{2i-1}(\textbf{a})+\s_{2k}(\textbf{a}),a_{2i-2}+\od_{i}(\textbf{a})-1,\od_{i}(\textbf{a}))}\notag\\
&\times \prod_{i=2}^{k}\frac{\T(z+\s_{2i-1}(\textbf{a})+\s_{2k}(\textbf{a}),a_{2i-2}+\od_{i}(\textbf{a})-1,\od_{i}(\textbf{a}))}{\T(y+\s_{2i-1}(\textbf{a})+\s_{2k}(\textbf{a}),a_{2i-2}+\od_{i}(\textbf{a})-1,\od_{i}(\textbf{a}))}.
\end{align}
\end{thm}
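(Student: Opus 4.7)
The plan is to prove Theorem \ref{main8} by induction, using Kuo condensation in the same spirit as the proofs of Theorems \ref{main1}--\ref{main7}. First I would dispose of the base case $k=1$ (i.e.\ $\textbf{a}=(a,b)$). In $H^{(8)}_{x,y,z}(a,b)$, the trimmed northern side together with the upper portion of the western side force a deterministic band of lozenges that carves off an upper slab with no free choices; what remains after deleting these forced lozenges splits into an unweighted trapezoidal piece of the shape enumerated by $\Pn_{y,y+2a-2,b}$ (producing the $\T/\V$ ratio in the first displayed formula) and a weighted pentagonal piece with a single hole that is an $H^{(4)}$-type region of lower complexity, which is handled by the $k=1$ case of Theorem \ref{main4} together with equation (\ref{QARd}) of Lemma \ref{QAR}. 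Multiplying the two contributions and simplifying yields the first displayed identity.

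For the inductive step, I would apply Kuo's graphical condensation \cite{Kuo} to the dual graph of $H^{(8)}_{x,y,z}(a_1,\dotsc,a_{2k})$, choosing the four distinguished boundary vertices at or near the corners of the pentagon (concretely, two near the northeastern segment and two near the lower-left corner, with positions matched to the black/white coloring of the dual graph). Each of the six sub-regions that arises from Kuo's identity is then either an $H^{(8)}$-type region with one of the parameters $x,y,z$ or $a_{2k}$ decreased by one, or it factors via further forced lozenges as the product of a smaller $H^{(8)}$ with a $\Q$- or $\K'$-factor obtainable from Lemma \ref{QAR}. This yields a recurrence
\[
\M(H^{(8)}_{x,y,z}(\textbf{a}))\,\M(R_0)=\M(R_1)\,\M(R_2)+\M(R_3)\,\M(R_4),
\]
in which five of the six factors are already known from the inductive hypothesis, so that $\M(H^{(8)}_{x,y,z}(\textbf{a}))$ is determined uniquely. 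The final task is to verify that the proposed product formula satisfies the same recurrence, which reduces to a polynomial identity among shifted Pochhammer-type factors in $\T$, $\V$, $\Pn$, $\Pn'$, $\Q$, and $\K'$; this is settled by standard telescoping, exactly as in the corresponding verifications for Theorems \ref{main1}--\ref{main7}.

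The main obstacle is the mixed character of the western boundary of $H^{(8)}$: the vertical lozenges above the array of holes carry weight $1$, while those below carry weight $1/2$. Kuo's recurrence must therefore be set up so that each of the four deleted-vertex sub-regions is again of the same mixed type, rather than degenerating to the fully unweighted $H^{(2)}$-family or to the fully weighted $H^{(4)}$-family; accordingly, the formula factors cleanly into an unweighted $\Q$-contribution from the upper slab and a weighted $\K'$-contribution from the lower slab, and this factorization must be matched by the choice of deleted vertices. Low-dimensional degeneracies---$a_1=0$, $z=0$ (which collapses the lower pentagonal piece), or the odd-length case handled via the appended-zero convention analogous to equation (\ref{main1eqx}) of Theorem \ref{main1}---will need to be verified separately as initial cases of the induction.
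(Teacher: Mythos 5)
Your overall skeleton---a Kuo-condensation recurrence, verification that the conjectured product satisfies it, and degenerate cases handled by region-splitting---is indeed the paper's method (the paper proves Theorem \ref{main5} this way and states that Theorems \ref{main6}--\ref{main8} are ``completely analogous''). But your treatment of the two-hole case $k=1$ has a genuine gap. You claim the trimmed northern boundary of $H^{(8)}_{x,y,z}(a,b)$ forces ``a deterministic band of lozenges that carves off an upper slab with no free choices,'' reducing the count to a $\Pn$-factor times an $H^{(4)}$-type region. This is false for general $x,y,z>0$: removing the unit triangles running along the northern side leaves a sawtooth of up-pointing teeth, and each tooth forces only the single vertical lozenge directly beneath it; the cells below the removed down-pointing triangles remain free, so no slab is carved off and $H^{(8)}$ does not factor through $H^{(4)}$. (If it did, the $H^{(8)}$ formula would be a trivial corollary of Theorem \ref{main4}, whereas it contains $\V$-products that the $H^{(4)}$ formula does not.) In the paper's scheme the two-hole formula is itself proved by induction on $x+y+z$: the Region-splitting Lemma \ref{RS} applies only in the degenerate situations $x=0$, $y=0$, $z=0$, where the region genuinely splits into a $\Q$-type and a $\K'$-type piece evaluated by Lemma \ref{QAR} (together with Proctor-type formulas), and the step for $x,y,z\geq 1$ is the Kuo recurrence.

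Two further points. First, your induction bookkeeping does not close: with the paper's choice of the four vertices ($u,v$ at the upper-right corner, $w,s$ at the lower-right corner, as in the proofs of Theorems \ref{main1} and \ref{main5}), Kuo's Theorem \ref{Kuothm} shifts only $x,y,z$, yielding $H^{(8)}_{x,y-1,z-1}(\textbf{a})$, $H^{(8)}_{x,y-1,z}(\textbf{a})$, $H^{(8)}_{x,y,z-1}(\textbf{a})$, $H^{(8)}_{x+1,y-1,z-1}(\textbf{a})$, and $H^{(8)}_{x-1,y,z}(\textbf{a})$, all with the \emph{same} $\textbf{a}$; these are not covered by an inductive hypothesis on the number of holes, so the induction must run on $x+y+z$ for each fixed $\textbf{a}$, with the $x=0$, $y=0$, $z=0$ splittings as base cases---your proposed shift of $a_{2k}$ and the extra $\Q$- or $\K'$-factors inside the recurrence play no role. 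Second, the reduction of the $k\geq 2$ verification to the two-hole identity is not generic ``telescoping'': the paper writes the conjectured formula as $\phi_{x,y,z}(\Od(\textbf{a}),\E(\textbf{a}))\,f_{x,y,z}(\textbf{a})$ and observes that $f$ depends on $(x,y,z)$ only through $y$, $z$, $x+y$, and $x+z$, so the $f$-ratios in the recurrence cancel identically and the identity collapses to the already verified two-hole case; this cancellation is the step your sketch would still need to supply.
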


Assume that $x,y,z$ are non-negative integers, and   $\textbf{a}=(a_1,a_2,\dotsc,a_n)$ is a sequence of nonnegative integers as usual. Consider a symmetric hexagon of side-lengths $y+2\Od(\textbf{a})-a_1,x+2\E(\textbf{b}),y+2\Od(\textbf{a})-a_1,y+2\E(\textbf{a}),x+2\E(\textbf{a}),y+2\E(\textbf{a})$. We remove at level $z$ (from the bottom) a symmetric array of triangles of sides $a_n,a_{n-1},\dotsc,a_2,a_1,a_2,\dotsc,a_{n-1},a_n$ ordered from left to right, so that the middle triangle is an up-pointing triangle of side $a_1$. Denote by $\mathcal{S}_{x,y,z}(\textbf{a})=\mathcal{S}_{x,y,z}(a_1,a_2,\dotsc,a_n)$ the resulting region (see Figure \ref{middlearrayn}
 for examples).

We note that, by the symmetry, if  the total length of the array, $2(\E(\textbf{a})+\Od(\textbf{a}))-a_1$, and the length of the base of the hexagon, $x+2\Od(\textbf{a})-a_1$, have the same parity, then $z$ must be even. Thus, $z$ and $x$ \emph{always} have the same parity.

\begin{thm}\label{main9}
Assume that $x,y,z$ are non-negative integers and $\textbf{a}=(a_1,a_2,\dotsc,a_n)$ is a sequence of positive integers.  If $2\E(\textbf{a})-1\leq z \leq 2y+2\E(\textbf{a})+1$, then the number of tilings of $\mathcal{S}_{x,y,z}(\textbf{a})$ is always given by  a simple product formula as follows.

(1) If $x$ is even (so $z$ is even) and $a_1$ is even, then
\begin{align}
\M(\mathcal{S}_{x,y,z}(\textbf{a}))&=2^{y+a_2+a_3+\dotsc+a_n}\M\left( H^{(2)}_{\frac{x}{2}+\E(\textbf{a}),y-\frac{z}{2}+\E(\textbf{a}) ,\frac{z}{2}-\E(\textbf{a})}\left(\frac{a_1}{2},a_2,\dotsc,a_n\right)\right)\notag\\
&\times\M\left( H^{(3)}_{\frac{x}{2}+\E(\textbf{a}),y-\frac{z}{2}+\E(\textbf{a}) ,\frac{z}{2}-\E(\textbf{a})}\left(\frac{a_1}{2},a_2,\dotsc,a_n\right)\right).
\end{align}

(2) If $x$ is odd (so $z$ is odd)  and $a_1$ is even, then
\begin{align}
\M(\mathcal{S}_{x,y,z}(\textbf{a}))&=2^{y+a_2+a_3+\dotsc+a_n}\M\left( H^{(2)}_{\frac{x-1}{2}+\E(\textbf{a}),y-\frac{z}{2}+\E(\textbf{a}) ,\frac{z}{2}-\E(\textbf{a})+1}\left(\frac{a_1}{2},a_2,\dotsc,a_n\right)\right)\notag\\
&\times\M\left( H^{(3)}_{\frac{x+1}{2}+\E(\textbf{a}),y-\frac{z-1}{2}+\E(\textbf{a})-1 ,\frac{z-1}{2}-\E(\textbf{a})}\left(\frac{a_1}{2},a_2,\dotsc,a_n\right)\right).
\end{align}

(3) If $x$ is even (so $z$ is even) and $a_1$ is odd, then
\begin{align}
\M(\mathcal{S}_{x,y,z}(\textbf{a}))&=2^{y+a_2+a_3+\dotsc+a_n}\M\left( H^{(5)}_{\frac{x}{2}+\E(\textbf{a}),y-\frac{z}{2}+\E(\textbf{a}) ,\frac{z}{2}-\E(\textbf{a})}\left(\frac{a_1-1}{2},a_2,\dotsc,a_n\right)\right)\notag\\
&\times\M\left( H^{(8)}_{\frac{x}{2}+\E(\textbf{a}),y-\frac{z}{2}+\E(\textbf{a}),\frac{z}{2}-\E(\textbf{a})}\left(\frac{a_1+1}{2},a_2,\dotsc,a_n\right)\right).
\end{align}
(4) If $x$ is odd  (so $z$ is odd) and $a_1$ is odd, then
\begin{align}
\M(\mathcal{S}_{x,y,z}(\textbf{a}))&=2^{y+a_2+a_3+\dotsc+a_n}\M\left( H^{(5)}_{\frac{x+1}{2}+\E(\textbf{a}),y-\frac{z-1}{2}+\E(\textbf{a})-1 ,\frac{z-1}{2}-\E(\textbf{a})}\left(\frac{a_1-1}{2},a_2,\dotsc,a_n\right)\right)\notag\\
&\times\M\left( H^{(8)}_{\frac{x-1}{2}+\E(\textbf{a}),y-\frac{z-1}{2}+\E(\textbf{a}),\frac{z-1}{2}-\E(\textbf{a})+1}\left(\frac{a_1+1}{2},a_2,\dotsc,a_n\right)\right).
\end{align}
\end{thm}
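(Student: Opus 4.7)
The plan is to apply Ciucu's factorization theorem for tiling enumerations of lattice regions with reflective symmetry whose defects all lie on the symmetry axis. First I verify that $\mathcal{S}_{x,y,z}(\textbf{a})$ has a vertical symmetry axis: from the listed side-lengths, the NW and NE sides both equal $y+2\Od(\textbf{a})-a_1$, and the SW and SE sides both equal $y+2\E(\textbf{a})$, so the hexagonal outline is invariant under the vertical reflection that bisects its horizontal top and bottom sides. The array of triangles---consisting of the central up-pointing triangle of side $a_1$ together with copies of $a_2,a_3,\dotsc,a_n$ arranged symmetrically to its left and right---is likewise invariant under the reflection, so its center lies on this common symmetry axis.

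Next, replacing the axis by a zigzag staircase of lattice edges, Ciucu's factorization gives
\begin{equation*}
\M(\mathcal{S}_{x,y,z}(\textbf{a}))=2^{w}\,\M(R^+)\,\M(R^-),
\end{equation*}
where $R^+$ and $R^-$ are the two halved hexagons (each with a zigzag staircase boundary replacing the axis), each receiving a weight $\frac{1}{2}$ on the vertical lozenges along the staircase (exactly as in the weighted region $\mathcal{P}'_{a,b,c}$), and $w$ counts the horizontal lattice edges crossing the axis in the interior of the region. A direct count at the axis yields $w=y+a_2+a_3+\dotsc+a_n$: the contribution $y$ comes from the horizontal strip separating the array from the top or bottom boundary of the hexagon, and the contribution $a_j$ from each flanking triangle $a_2,\dotsc,a_n$ visible from one side of the array.

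Then, in each of the four parity cases for $(x,a_1)$, I identify $R^{\pm}$ as specific $H^{(i)}$-type regions defined earlier in this section. Two independent parities control the geometry of the cut: the parity of $x$ determines whether the vertical axis meets the top and bottom sides at lattice vertices (even $x$) or at edge midpoints (odd $x$, which induces a one-unit offset between the two halves), and the parity of $a_1$ determines whether the central triangle of side $a_1$ is split into two congruent triangles of side $a_1/2$ (even $a_1$) or into a pair of triangles of sides $(a_1-1)/2$ and $(a_1+1)/2$ (odd $a_1$). When $a_1$ is even, both halves have their entire staircase boundary weighted by $\frac{1}{2}$, producing the pair $H^{(2)}$ and $H^{(3)}$ in Case (1) and their one-unit shifted analogs in Case (2). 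When $a_1$ is odd, the asymmetric split of the central triangle forces mixed boundaries---one half has the $\frac{1}{2}$ weight only above the array, the other only below---yielding the pair $H^{(5)}$ and $H^{(8)}$ in Cases (3) and (4). The explicit arguments $\tfrac{a_1}{2}$, $\tfrac{a_1\pm1}{2}$ and the shifts in $y$ and $z$ appearing in the theorem statement are read off from the position of the array relative to the two horizontal boundaries of each half.

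The main obstacle will be the case-by-case bookkeeping: matching the pentagonal profiles of $R^{\pm}$---whose new staircase boundary is the vertical zigzag cut with half of the array of triangular holes attached---to the exact definitions of $H^{(2)},H^{(3)},H^{(5)},H^{(8)}$, and in particular verifying the one-unit parameter shifts that distinguish Cases (2) and (4) from Cases (1) and (3). The lattice-edge count yielding $w$ is routine once the correct cut is identified, but must be redone for each parity combination; the hypothesis $2\E(\textbf{a})-1\leq z\leq 2y+2\E(\textbf{a})+1$ is used to guarantee that the cut produces honest pentagonal halves rather than degenerate shapes.
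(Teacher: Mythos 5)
Your overall strategy is the same as the paper's: apply Ciucu's Factorization Theorem (Lemma \ref{ciucuthm}) to the dual graph $G$ of $\mathcal{S}_{x,y,z}(\textbf{a})$, extract the factor $2^{y+a_2+\dotsc+a_n}$ from the count of vertex pairs on the symmetry axis, and identify the two components $G^{\pm}$ with $H^{(i)}$-type regions according to the parities of $x$ and $a_1$; the paper does exactly this, reading the identifications (including the one-unit shifts in cases (2) and (4)) off Figure \ref{middlearrayn} and disposing of the degenerate range of $z$ in Remark \ref{remark1}. Your exponent $w=y+a_2+\dotsc+a_n$ is correct, although the cleaner way to see it is that the axis bisects exactly one unit triangle per lattice row, the hexagon has $2y+2\Od(\textbf{a})+2\E(\textbf{a})-a_1$ rows, and the central hole removes $a_1$ of them, leaving $2(y+\Od(\textbf{a})+\E(\textbf{a})-a_1)=2(y+a_2+\dotsc+a_n)$ axis vertices.

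There is, however, a genuine error in how you describe the outcome of the factorization in the even-$a_1$ cases: you assert that each half receives ``a weight $\frac{1}{2}$ on the vertical lozenges along the staircase (exactly as in $\mathcal{P}'$)'' and that for even $a_1$ ``both halves have their entire staircase boundary weighted by $\frac{1}{2}$, producing the pair $H^{(2)}$ and $H^{(3)}$.'' This contradicts the paper's definitions, since $H^{(2)}$ is an \emph{unweighted} region. In Lemma \ref{ciucuthm} the halved edges are only those lying on $\ell$, and after the colour-dependent deletions each halved edge lands in exactly one of the two components, determined by the colours of its endpoints. When $a_1$ is even the colouring pattern along the axis is uniform, so \emph{all} halved edges land in the same component (the dual of the weighted $H^{(3)}$), while the other component loses all its edges to the axis; in the tiling picture this creates forced lozenges and the offset zigzag boundary of the unweighted $H^{(2)}$. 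Taken literally, your ``two weighted halves'' would produce a different, incorrect product. Ironically, your account of the odd-$a_1$ case is right: the colouring of the axis vertices flips across a central hole of odd side, so the retained halved edges switch sides above and below the array, which is exactly what produces the mixed-boundary pair $H^{(5)}$, $H^{(8)}$. Once you propagate that same colour-tracking reasoning to the even case (and carry out the parity bookkeeping you already flagged), your proposal coincides with the paper's proof.
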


Note that when $z<2\E(\textbf{a})-1$ or $z>2y+2\E(\textbf{a})+1$, then $\M(\mathcal{S}_{x,y,z}(\textbf{a}))=0$. This will be explained later in Remark \ref{remark1} of  Section \ref{Mainproofs}.

\section{Preliminaries}\label{Background}
Let $G$ be a simple graph without loop. A \emph{perfect matching} (or simple \emph{matching} in this paper) of $G$ is a collection vertex-disjoint edges that cover all vertices of $G$. We use the notation $\M(G)$ for the number of matchings of $G$. The tilings of a region $R$ can be identified with the matchings of its \emph{dual graph} $G$ (the graph whose vertices are unit triangles in $R$ and whose edges connect precisely two unit triangles sharing an edge). In the weighted case, each edge of the dual graph $G$ has the same weight as the corresponding lozenge in the region $R$.  In this case $M(G)$ is the weighted number matchings of $G$, i.e. the sum of weights of all matchings in $G$, where the weight of a matching is the product of all weight of its constituent edges.

If a region admits a tiling, then it has the same number of up- and down-pointing unit triangles. We call the regions satisfying the latter balancing condition \emph{balanced} regions.

\begin{lem}[Region-splitting Lemma]\label{RS}
Let $R$ be a balanced region. Assume that a subregion $S$ of $R$ satisfies the following two conditions:
\begin{enumerate}
\item[(i)] \text{\rm{(Separating Condition)}} There is only one type of unit triangle (up-pointing or down-pointing) running along each side of the border between $S$ and $R-S$.

\item[(ii)] \text{\rm{(Balancing Condition)}} $S$ is balanced.
\end{enumerate}
Then
\begin{equation}\label{GSeq}
\M(R)=\M(S)\, \M(R-S).
\end{equation}
\end{lem}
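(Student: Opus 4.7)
The plan is to reduce the claim to the single statement that no lozenge in any tiling of $R$ can straddle the border between $S$ and $R-S$. Once this is shown, every tiling of $R$ splits uniquely into the disjoint union of a tiling of $S$ and a tiling of $R-S$, giving a weight-preserving bijection between tilings of $R$ and pairs (tiling of $S$, tiling of $R-S$), from which (\ref{GSeq}) follows immediately upon summing the products of weights.

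To prove the no-straddling claim, I will first exploit that two unit triangles sharing an edge always have opposite orientations; this reduces the Separating Condition to a single-sided hypothesis. Without loss of generality, every unit triangle of $S$ adjacent to the border is up-pointing, which automatically forces every unit triangle of $R-S$ adjacent to the border to be down-pointing. I will then fix a tiling $T$ of $R$, let $k$ denote the number of lozenges of $T$ that straddle the border, and run the standard two-way count: each straddling lozenge covers one up-pointing triangle of $S$ but no down-pointing triangle of $S$, so the $d_S$ down-pointing triangles of $S$ must all be covered by lozenges lying entirely inside $S$, and each such interior lozenge also covers exactly one up-pointing triangle of $S$. This yields $d_S=u_S-k$, where $u_S$ and $d_S$ denote the numbers of up- and down-pointing unit triangles of $S$, and combined with $u_S=d_S$ from the Balancing Condition, it forces $k=0$.

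The main obstacle, and really the only subtle point, is parsing the Separating Condition correctly, i.e.\ recognizing that the two sides of the border automatically carry opposite orientations of unit triangles, and that balancedness of $S$ together with $R$ being balanced is exactly what is needed to drive the straddler count $k$ to zero (note that balancedness of $R-S$ is a free consequence, so the argument is actually symmetric in $S$ and $R-S$). Once $k=0$ is established, the rest is bookkeeping: in the weighted setting the weight of a tiling of $R$ is the product of weights of its lozenges, and since no lozenge crosses the border this product factors as the weight of the restriction to $S$ times the weight of the restriction to $R-S$; summing over all tilings of $R$ yields $\M(R)=\M(S)\,\M(R-S)$.
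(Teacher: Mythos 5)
Your proof is correct and takes essentially the same approach as the paper: both arguments combine the Separating Condition with the balancedness of $S$ (and hence of $R-S$) to rule out any lozenge straddling the border, after which every tiling of $R$ factors uniquely and weight-multiplicatively into a tiling of $S$ and a tiling of $R-S$. Your explicit count $d_S=u_S-k$ forcing $k=0$ is just a quantitative rendering of the paper's observation that removing the boundary-crossing lozenges would leave unbalanced, hence untileable, regions.
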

\begin{proof}
Assume there is a tiling of $R$ which contains boundary-crossing lozenges between $S$ and $R - S$ (i.e., lozenges which consist of a unit triangle from the boundary of $S$ and a unit triangle from the boundary of $R - S$). Since there is only one type of unit triangle on each side of the boundary between $S$ and $R - S$, and since $S$ and $R-S$ are balanced, the regions obtained by removing such boundary-crossing lozenges would no longer be balanced, and hence would have no tilings. Therefore, there can not be any boundary-crossing lozenges, and $S$ and $R - S$ must be tiled independently, giving the factorization (\ref{GSeq}).
\end{proof}

One of the main ingredients of our proofs is the following powerful theorem by Eric H. Kuo \cite{Kuo} that is usually mentioned as \emph{Kuo condensation}.
\begin{thm}[Theorem 5.1 in \cite{Kuo}]\label{Kuothm}
Assume that $G=(V_1,V_2,E)$ is a weighted bipartite planar graph with two vertex classes $V_1$ and $V_2$ of the same cardinality. Assume in addition that $u,v,w,s$ are four vertices appearing on a cyclic order on a face of $G$, such that $u,w \in V_1$ and $v,s\in V_2$. Then
\begin{align}\label{Kuoeq}
\M(G)\M(G-\{u,v,w,s\})=\M(G-\{u,v\})\M(G-\{w,s\})+\M(G-\{u,s\})\M(G-\{v,w\}).
\end{align}
\end{thm}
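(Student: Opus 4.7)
The plan is to prove (\ref{Kuoeq}) by constructing a weight-preserving bijection via superposition of matchings. Each of the four products appearing in (\ref{Kuoeq}) counts weighted pairs of matchings of two subgraphs of $G$; for example, $\M(G)\,\M(G-\{u,v,w,s\})$ is the sum, over pairs $(M_1, M_2)$ with $M_1$ a matching of $G$ and $M_2$ a matching of $G-\{u,v,w,s\}$, of the product of all edge weights, and similarly for the three other products. For any such pair I would form the multiset union $M_1 \cup M_2$ as a subgraph of $G$, in which every vertex outside $\{u,v,w,s\}$ has degree exactly $2$ while each of $u,v,w,s$ has degree exactly $1$. Consequently, $M_1 \cup M_2$ decomposes uniquely into doubled edges, alternating cycles, and exactly two alternating paths whose four endpoints are $u,v,w,s$.

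The next step uses bipartiteness and planarity to constrain which endpoint pairings of the two paths can actually occur. Each alternating path has odd edge-length precisely when both its endpoints play the same ``only'' role in the superposition (i.e.\ touch only one of the two matchings); combining this with the bipartite parity constraint imposed by $\{u,w\}\subset V_1$ and $\{v,s\}\subset V_2$ gives a clean case analysis. The LHS admits only the pairings $\{\{u,v\},\{w,s\}\}$ and $\{\{u,s\},\{v,w\}\}$; the product $\M(G-\{u,v\})\,\M(G-\{w,s\})$ admits $\{\{u,v\},\{w,s\}\}$ or $\{\{u,w\},\{v,s\}\}$; and $\M(G-\{u,s\})\,\M(G-\{v,w\})$ admits $\{\{u,s\},\{v,w\}\}$ or $\{\{u,w\},\{v,s\}\}$. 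To eliminate the crossing pairing $\{\{u,w\},\{v,s\}\}$ from both RHS terms I invoke planarity: in a planar embedding, two vertex-disjoint paths whose four endpoints interleave in the cyclic order $u,v,w,s$ on a common face would, together with an arc across that face, form a Jordan curve separating one endpoint from its partner, forcing one path to cross the other, which is impossible.

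With the crossing pairing excluded, the remaining pairings on the LHS biject with those on the RHS through a local edge relabeling. Given a superposition whose two paths have endpoints $\{u,v\}$ and $\{w,s\}$, I exchange the two matching labels along each of the two paths: this leaves the underlying multigraph unchanged—hence preserves the total edge-weight product—while transferring which of $u,v,w,s$ is ``missing'' from which matching, thereby converting an LHS pair into a pair counted by $\M(G-\{u,v\})\,\M(G-\{w,s\})$, and vice versa. The analogous swap for the pairing $\{\{u,s\},\{v,w\}\}$ yields a bijection with pairs counted by $\M(G-\{u,s\})\,\M(G-\{v,w\})$. Summing over pairings yields (\ref{Kuoeq}). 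The main obstacle is precisely the planarity/non-crossing argument that rules out $\{\{u,w\},\{v,s\}\}$ on the RHS, which is where the cyclic-order hypothesis is used in an essential way; the remaining bookkeeping of parities together with the bijective edge swap along alternating paths is routine once the correct configurations have been identified.
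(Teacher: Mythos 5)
The paper does not prove this statement at all; it is quoted verbatim from Kuo's paper, so the only meaningful comparison is with Kuo's original argument, which your proposal essentially reconstructs: superpose the two matchings, decompose the union into doubled edges, alternating cycles, and exactly two alternating paths ending at $u,v,w,s$; use the parity observation (a path is odd iff both endpoints are missed by the same matching) together with bipartiteness to get your case analysis of admissible endpoint pairings, which is correct for all three products; and use the Jordan-curve argument to exclude the interleaved pairing $\{\{u,w\},\{v,s\}\}$, which is also correct and is indeed where the cyclic-order hypothesis enters.

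However, your final bijection step, as written, fails. Starting from an LHS pair $(M_1,M_2)$ with $M_1$ a perfect matching of $G$ and $M_2$ a matching of $G-\{u,v,w,s\}$, all four of $u,v,w,s$ are covered only by $M_1$, and both paths have their terminal edges in $M_1$. If you exchange the labels along \emph{each} of the two paths, then all four vertices become covered only by the new second matching: the result is a pair consisting of a perfect matching of $G$ and a matching of $G-\{u,v,w,s\}$ again, i.e.\ your move is an involution on the set of LHS superpositions and never produces a pair counted by $\M(G-\{u,v\})\,\M(G-\{w,s\})$. The correct move swaps labels along exactly \emph{one} canonically chosen path: for the pairing $\{\{u,v\},\{w,s\}\}$, swapping only along the $w$--$s$ path leaves $u,v$ covered by $M_1'$ while transferring $w,s$ to $M_2'$, so that $M_1'$ is a matching of $G-\{w,s\}$ and $M_2'$ one of $G-\{u,v\}$, and applying the same one-path swap again inverts the map; analogously, for the pairing $\{\{u,s\},\{v,w\}\}$ one swaps only along (say) the $v$--$w$ path to reach the pairs counted by $\M(G-\{u,s\})\,\M(G-\{v,w\})$. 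With this repair---``along each of the two paths'' replaced by ``along one fixed path of the pair''---your argument is complete and coincides with Kuo's; everything else (the parity bookkeeping, the planarity exclusion, and weight preservation, since the multiset of edges is unchanged) is sound.
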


The next lemma is often called \emph{Ciucu's factorization theorem} (Theorem 1.2 in \cite{Ciucu3}), that allows us write the number of matchings of  a symmetric graph as the product of the matching numbers of two smaller graphs.

\begin{lem}[Ciucu's Factorization Theorem]\label{ciucuthm}
Let $G=(V_1,V_2,E)$ be a weighted bipartite planar graph with a vertical symmetry axis $\ell$. Assume that $a_1,b_1,a_2,b_2,\dots,a_k,b_k$ are all the vertices of $G$ on $\ell$ appearing in this order from top to bottom\footnote{It is easy to see that if $G$ admits a perfect matching, then $G$ has an even number of vertices on $\ell$.}. Assume in addition that the vertices of $G$ on $\ell$ form a cut set of $G$ (i.e. the removal of those vertices separates $G$ into two vertex-disjoint graphs). We reduce the weights of all edges of $G$ lying on $\ell$ by half and keep the other edge-weights unchanged. Next, we color two vertex classes of $G$ by black and white, without loss of generality, assume that $a_1$ is black. Finally, we remove all edges on the left of $\ell$ which are adjacent to a black $a_i$ or a white $b_j$; we also remove the edges  on the right of $\ell$ which are adjacent to a white $a_i$ or a black $b_j$. This way, $G$ is divided into two disjoint weighted graphs $G^+$ and $G^-$ (on the left and on the right of $\ell$, respectively). Then
\begin{equation}
\M(G)=2^{k}\M(G^+)\M(G^-).
\end{equation}
\end{lem}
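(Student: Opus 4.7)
\smallskip
\noindent\textbf{Proof proposal for Lemma \ref{ciucuthm}.} The plan is to carry out Ciucu's original combinatorial argument for this factorization identity, by constructing an (essentially) weight-preserving map from $\mathcal{M}(G)$ to $\mathcal{M}(G^+)\times\mathcal{M}(G^-)$ and computing its fibers. First I would fix the bipartition of $G$ so that $a_1$ is black. Since $G$ is bipartite and the vertices $a_1,b_1,a_2,b_2,\dots,a_k,b_k$ appear consecutively along $\ell$ with (in the interesting case) axis edges between neighbors, an easy parity check shows that every $a_i$ is black and every $b_j$ is white. Consequently the edge-removal rule in the statement is equivalent to: on the left of $\ell$, detach every axis vertex from its off-axis neighbors, while on the right of $\ell$ keep those edges; the axis edges of $\ell$ themselves stay in both $G^+$ and $G^-$ but with weights halved.

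Next I would let $\sigma$ be the involution of $\mathcal{M}(G)$ induced by reflection across $\ell$, and for each $\mu\in\mathcal{M}(G)$ analyze the symmetric difference $\mu\triangle\sigma(\mu)$. This is a disjoint union of closed alternating cycles in $G$. Cycles not fixed by $\sigma$ pair up with their mirror images. Any $\sigma$-invariant cycle must meet $\ell$; using the cut-set hypothesis one checks that such a cycle meets $\ell$ in exactly two axis vertices, and that its two halves are mirror images exchanged by $\sigma$. This decomposition is the engine that lets us cut matchings along $\ell$.

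Using this structure I would define the splitting map $\Phi\colon \mu\mapsto(\mu^+,\mu^-)$ as follows. Edges of $\mu$ strictly on the left (resp.\ right) of $\ell$ go directly into $\mu^-$ (resp.\ $\mu^+$). An axis edge of $\mu$ lying on $\ell$ is placed in \emph{both} $\mu^+$ and $\mu^-$ (its halved weight in each copy multiplies to the original weight, giving a harmless factor). For each axis vertex $v$ that is matched in $\mu$ to an off-axis neighbor, the coloring rule in the lemma dictates which side of $\ell$ receives the edge covering $v$; the partner vertex of $v$ in the opposite half is then forced to be covered by a non-$\mu$ edge, which is supplied canonically from the mirror matching $\sigma(\mu)$ using the cycle decomposition of the previous paragraph. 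A direct check confirms that $\mu^+$ and $\mu^-$ are perfect matchings of $G^+$ and $G^-$ respectively, and that the total weight product is $w(\mu^+)\,w(\mu^-) = w(\mu)$ after the halving of axis weights.

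The main obstacle, and the heart of the proof, is computing the fiber size of $\Phi$. For each of the $k$ pairs $\{a_i,b_i\}$ of axis vertices, there is a binary choice in recovering $\mu$ from $(\mu^+,\mu^-)$: informally, one may either apply $\sigma$ to the axis-crossing cycle of $\mu\triangle\sigma(\mu)$ passing through that pair or not, producing two distinct matchings of $G$ with the same image under $\Phi$ and the same weight. Thus $\Phi$ is $2^k$-to-$1$, surjective, and weight-preserving, which yields
\begin{equation}
\M(G)=2^{k}\,\M(G^+)\,\M(G^-).
\end{equation}
The delicate points in executing this plan are verifying the well-definedness of $\mu^\pm$ (especially that the coloring convention correctly covers every axis vertex exactly once on each side), and handling the boundary cases in which an axis vertex is matched along $\ell$ versus off $\ell$, so that the halving of axis edge weights precisely compensates for the double-counting of those edges in $\mu^+$ and $\mu^-$.
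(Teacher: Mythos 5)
First, a point of reference: the paper does not actually prove Lemma \ref{ciucuthm} --- it is quoted from Ciucu \cite{Ciucu3} (Theorem 1.2 there), so your attempt can only be compared with Ciucu's original argument. Your overall engine --- reflect a matching $\mu$ across $\ell$, decompose $\mu\triangle\sigma(\mu)$ into alternating cycles, observe that any cycle meeting $\ell$ is $\sigma$-invariant and contains exactly two axis vertices with mirror-image arcs, and extract the factor $2^k$ from binary flip choices --- is the right combinatorial mechanism, and that part of your outline is sound.

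However, the sketch rests on a concrete structural misreading, and a false counting claim follows from it. The graphs $G^+$ and $G^-$ are \emph{vertex-disjoint}: every axis vertex, and hence every edge lying on $\ell$, is assigned to exactly one of them by the coloring rule. Indeed, in your special case where all $a_i$ are black and all $b_j$ are white, \emph{all} axis vertices and all halved axis edges end up on the same side --- this is visible in the paper's own application in Theorem \ref{main9}, where $G^-$ is the dual graph of a weighted $H^{(3)}$-region carrying all the $\frac{1}{2}$-lozenges while $G^+$ is an unweighted $H^{(2)}$-region. So an axis edge of $\mu$ cannot be ``placed in both $\mu^+$ and $\mu^-$,'' and your weight arithmetic is wrong in any case: two copies at weight $w/2$ multiply to $w^2/4$, not $w$. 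The correct bookkeeping, which your ``harmless factor'' remark papers over, is that a matching $\mu$ containing $j$ axis edges satisfies $w(\mu)=2^{\,j}\,w(\mu^+)\,w(\mu^-)$ and has fiber of size $2^{\,k-j}$ (one flip per axis-crossing cycle, and there are exactly $k-j$ such cycles, since the $2k-2j$ axis vertices matched off the axis are paired two per cycle); only the \emph{product} of fiber size and weight ratio is uniformly $2^k$, so $\Phi$ is neither weight-preserving nor $2^k$-to-one whenever $\mu$ uses an edge of $\ell$. Finally, the genuine crux is the compatibility between the cycle-pairing and the side assignments: a crossing cycle need not join $a_i$ to $b_i$ (it may join, say, $b_1$ to $a_2$), and one must prove --- by a planarity/nesting argument showing each cycle skips an even number of intermediate axis vertices --- that the parities of the two arcs always match the sides designated by the coloring rule, so that the cut pieces are balanced and $\mu^{\pm}$ are perfect matchings. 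You assert this (``one checks''), and moreover your reduction to the alternating coloring discards the general case: axis vertices need not be pairwise adjacent, $a_i$ and $b_i$ can lie in the same color class, and the lemma's asymmetric treatment of the $a$'s and $b$'s exists precisely to make this compatibility hold in general. (A minor slip: your assignment of left edges to $\mu^-$ and right edges to $\mu^+$ is swapped relative to the lemma's convention that $G^+$ lies on the left.)
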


\begin{figure}\centering
\setlength{\unitlength}{3947sp}%
\begingroup\makeatletter\ifx\SetFigFont\undefined%
\gdef\SetFigFont#1#2#3#4#5{%
  \reset@font\fontsize{#1}{#2pt}%
  \fontfamily{#3}\fontseries{#4}\fontshape{#5}%
  \selectfont}%
\fi\endgroup%
\resizebox{12cm}{!}{
\begin{picture}(0,0)%
\includegraphics{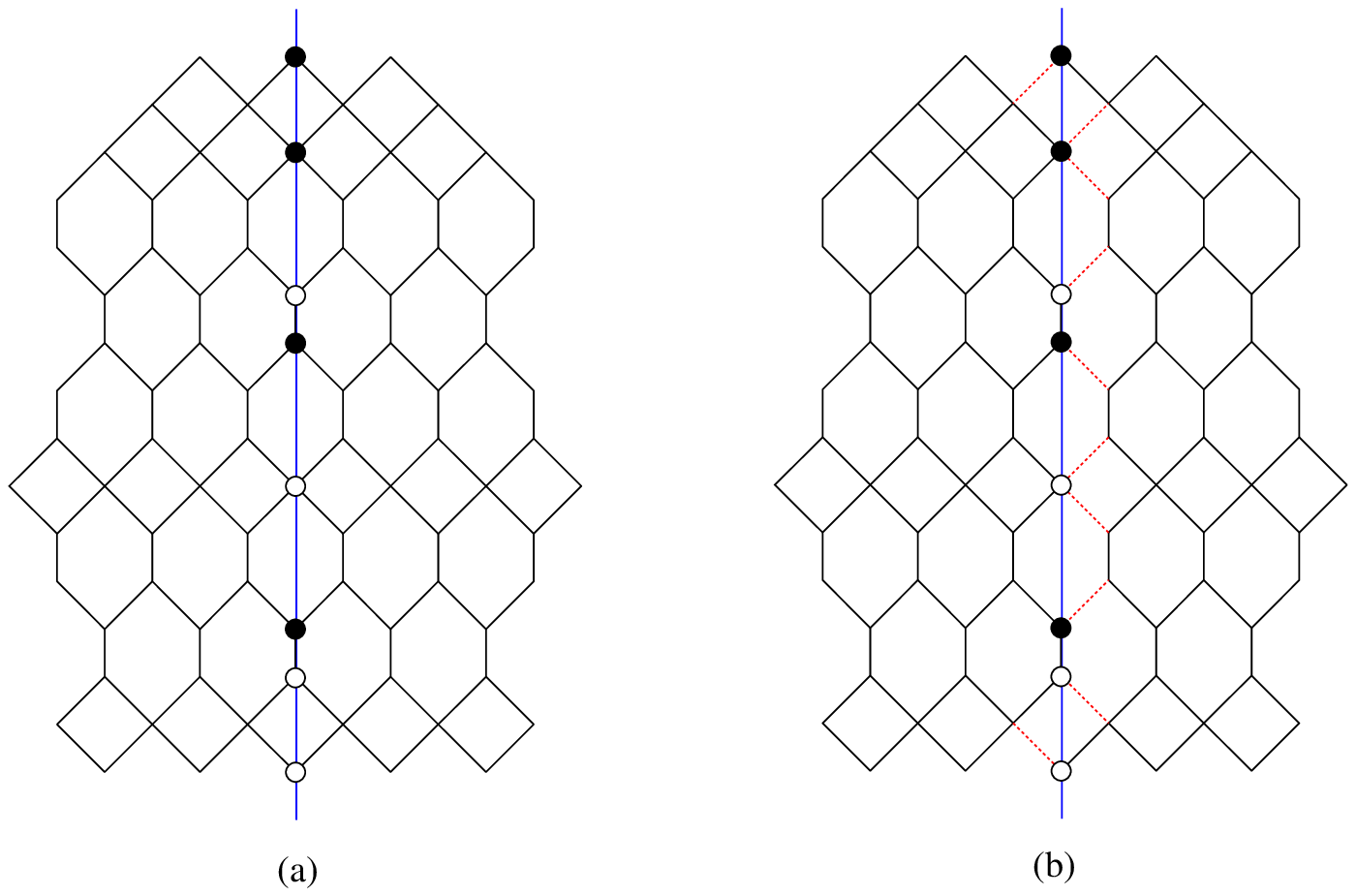}%
\end{picture}%

\begin{picture}(8018,4626)(-296,-4264)
\put(6496,-1478){\makebox(0,0)[rb]{\smash{{\SetFigFont{11}{13.2}{\familydefault}{\mddefault}{\updefault}{$\frac{1}{2}$}%
}}}}
\put(6097,162){\makebox(0,0)[rb]{\smash{{\SetFigFont{11}{13.2}{\familydefault}{\mddefault}{\updefault}{$\ell$}%
}}}}
\put(6496,-75){\makebox(0,0)[rb]{\smash{{\SetFigFont{11}{13.2}{\familydefault}{\mddefault}{\updefault}{$a_1$}%
}}}}
\put(6085,-618){\makebox(0,0)[rb]{\smash{{\SetFigFont{11}{13.2}{\familydefault}{\mddefault}{\updefault}{$b_1$}%
}}}}
\put(6055,-1309){\makebox(0,0)[rb]{\smash{{\SetFigFont{11}{13.2}{\familydefault}{\mddefault}{\updefault}{$a_2$}%
}}}}
\put(6085,-1571){\makebox(0,0)[rb]{\smash{{\SetFigFont{11}{13.2}{\familydefault}{\mddefault}{\updefault}{$b_2$}%
}}}}
\put(6508,-2241){\makebox(0,0)[rb]{\smash{{\SetFigFont{11}{13.2}{\familydefault}{\mddefault}{\updefault}{$a_3$}%
}}}}
\put(6542,-2970){\makebox(0,0)[rb]{\smash{{\SetFigFont{11}{13.2}{\familydefault}{\mddefault}{\updefault}{$b_3$}%
}}}}
\put(6529,-3207){\makebox(0,0)[rb]{\smash{{\SetFigFont{11}{13.2}{\familydefault}{\mddefault}{\updefault}{$a_4$}%
}}}}
\put(6517,-3792){\makebox(0,0)[rb]{\smash{{\SetFigFont{11}{13.2}{\familydefault}{\mddefault}{\updefault}{$b_4$}%
}}}}
\put(4610,-1474){\makebox(0,0)[rb]{\smash{{\SetFigFont{11}{13.2}{\familydefault}{\mddefault}{\updefault}{$G^+$}%
}}}}
\put(7707,-1440){\makebox(0,0)[rb]{\smash{{\SetFigFont{11}{13.2}{\familydefault}{\mddefault}{\updefault}{$G^-$}%
}}}}
\put(6101,-3131){\makebox(0,0)[rb]{\smash{{\SetFigFont{11}{13.2}{\familydefault}{\mddefault}{\updefault}{$\frac{1}{2}$}%
}}}}
\put(2303,156){\makebox(0,0)[rb]{\smash{{\SetFigFont{11}{13.2}{\familydefault}{\mddefault}{\updefault}{$\ell$}%
}}}}
\put(2702,-81){\makebox(0,0)[rb]{\smash{{\SetFigFont{11}{13.2}{\familydefault}{\mddefault}{\updefault}{$a_1$}%
}}}}
\put(2291,-624){\makebox(0,0)[rb]{\smash{{\SetFigFont{11}{13.2}{\familydefault}{\mddefault}{\updefault}{$b_1$}%
}}}}
\put(2261,-1315){\makebox(0,0)[rb]{\smash{{\SetFigFont{11}{13.2}{\familydefault}{\mddefault}{\updefault}{$a_2$}%
}}}}
\put(2291,-1577){\makebox(0,0)[rb]{\smash{{\SetFigFont{11}{13.2}{\familydefault}{\mddefault}{\updefault}{$b_2$}%
}}}}
\put(2714,-2247){\makebox(0,0)[rb]{\smash{{\SetFigFont{11}{13.2}{\familydefault}{\mddefault}{\updefault}{$a_3$}%
}}}}
\put(2748,-2976){\makebox(0,0)[rb]{\smash{{\SetFigFont{11}{13.2}{\familydefault}{\mddefault}{\updefault}{$b_3$}%
}}}}
\put(2735,-3213){\makebox(0,0)[rb]{\smash{{\SetFigFont{11}{13.2}{\familydefault}{\mddefault}{\updefault}{$a_4$}%
}}}}
\put(2723,-3798){\makebox(0,0)[rb]{\smash{{\SetFigFont{11}{13.2}{\familydefault}{\mddefault}{\updefault}{$b_4$}%
}}}}
\put(1546,-61){\makebox(0,0)[rb]{\smash{{\SetFigFont{11}{13.2}{\familydefault}{\mddefault}{\updefault}{$G$}%
}}}}
\end{picture}}
\caption{Ciucu's Factorization Theorem. The edges cut off are illustrated by dotted edges.}\label{Figurefactor}
\end{figure}
\section{Proof of the main theorems}\label{Mainproofs}

\begin{figure}
  \centering
  \setlength{\unitlength}{3947sp}%
\begingroup\makeatletter\ifx\SetFigFont\undefined%
\gdef\SetFigFont#1#2#3#4#5{%
  \reset@font\fontsize{#1}{#2pt}%
  \fontfamily{#3}\fontseries{#4}\fontshape{#5}%
  \selectfont}%
\fi\endgroup%
\resizebox{10cm}{!}{
\begin{picture}(0,0)%
\includegraphics{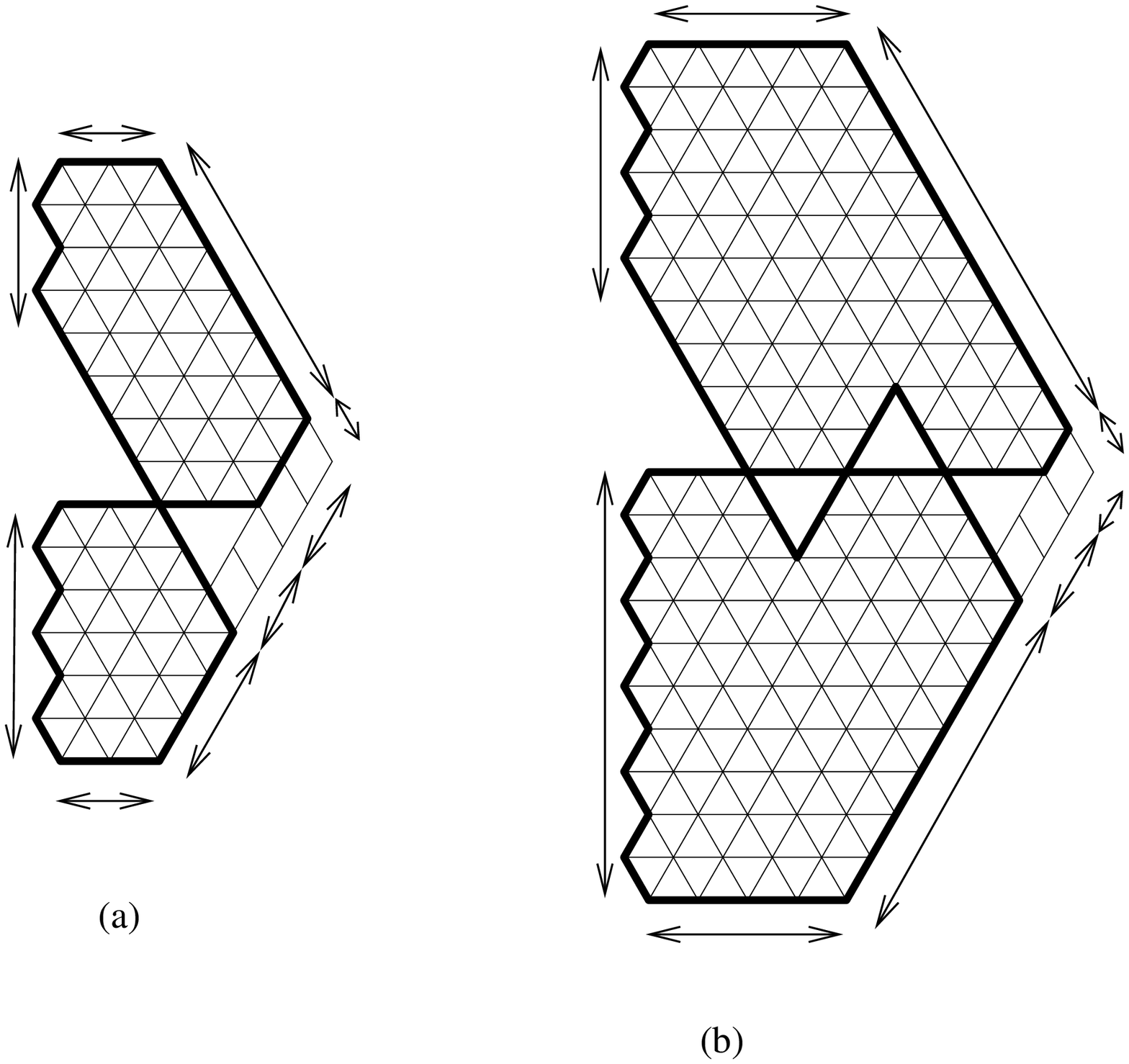}%
\end{picture}%

\begin{picture}(9450,8745)(5183,-9312)
\put(6043,-4748){\makebox(0,0)[lb]{\smash{{\SetFigFont{14}{16.8}{\rmdefault}{\mddefault}{\itdefault}{$a_1$}%
}}}}
\put(6958,-5138){\makebox(0,0)[lb]{\smash{{\SetFigFont{14}{16.8}{\rmdefault}{\mddefault}{\itdefault}{$a_2$}%
}}}}
\put(5458,-6338){\rotatebox{90.0}{\makebox(0,0)[lb]{\smash{{\SetFigFont{14}{16.8}{\rmdefault}{\mddefault}{\itdefault}{$z+a_2$}%
}}}}}
\put(5413,-2978){\rotatebox{90.0}{\makebox(0,0)[lb]{\smash{{\SetFigFont{14}{16.8}{\rmdefault}{\mddefault}{\itdefault}{$y$}%
}}}}}
\put(6103,-1688){\makebox(0,0)[lb]{\smash{{\SetFigFont{14}{16.8}{\rmdefault}{\mddefault}{\itdefault}{$a_2$}%
}}}}
\put(7393,-2603){\rotatebox{300.0}{\makebox(0,0)[lb]{\smash{{\SetFigFont{14}{16.8}{\rmdefault}{\mddefault}{\itdefault}{$y+2a_1$}%
}}}}}
\put(8308,-4103){\rotatebox{300.0}{\makebox(0,0)[lb]{\smash{{\SetFigFont{14}{16.8}{\rmdefault}{\mddefault}{\itdefault}{$z$}%
}}}}}
\put(8113,-5333){\rotatebox{60.0}{\makebox(0,0)[lb]{\smash{{\SetFigFont{14}{16.8}{\rmdefault}{\mddefault}{\itdefault}{$y$}%
}}}}}
\put(7738,-6068){\rotatebox{60.0}{\makebox(0,0)[lb]{\smash{{\SetFigFont{14}{16.8}{\rmdefault}{\mddefault}{\itdefault}{$a_2$}%
}}}}}
\put(7258,-6968){\rotatebox{60.0}{\makebox(0,0)[lb]{\smash{{\SetFigFont{14}{16.8}{\rmdefault}{\mddefault}{\itdefault}{$z+a_2$}%
}}}}}
\put(6013,-7583){\makebox(0,0)[lb]{\smash{{\SetFigFont{14}{16.8}{\rmdefault}{\mddefault}{\itdefault}{$a_1$}%
}}}}
\put(13155,-4936){\makebox(0,0)[lb]{\smash{{\SetFigFont{14}{16.8}{\rmdefault}{\mddefault}{\itdefault}{$a_4$}%
}}}}
\put(9975,-2911){\rotatebox{90.0}{\makebox(0,0)[lb]{\smash{{\SetFigFont{14}{16.8}{\rmdefault}{\mddefault}{\itdefault}{$y+a_3$}%
}}}}}
\put(10703,-8626){\makebox(0,0)[lb]{\smash{{\SetFigFont{14}{16.8}{\rmdefault}{\mddefault}{\itdefault}{$a_1+a_3$}%
}}}}
\put(14400,-5191){\rotatebox{60.0}{\makebox(0,0)[lb]{\smash{{\SetFigFont{14}{16.8}{\rmdefault}{\mddefault}{\itdefault}{$y$}%
}}}}}
\put(14302,-4165){\rotatebox{300.0}{\makebox(0,0)[lb]{\smash{{\SetFigFont{14}{16.8}{\rmdefault}{\mddefault}{\itdefault}{$z$}%
}}}}}
\put(12930,-1816){\rotatebox{300.0}{\makebox(0,0)[lb]{\smash{{\SetFigFont{14}{16.8}{\rmdefault}{\mddefault}{\itdefault}{$y+2a_1+2a_3$}%
}}}}}
\put(10830,-841){\makebox(0,0)[lb]{\smash{{\SetFigFont{14}{16.8}{\rmdefault}{\mddefault}{\itdefault}{$a_2+a_4$}%
}}}}
\put(14025,-5881){\rotatebox{60.0}{\makebox(0,0)[lb]{\smash{{\SetFigFont{14}{16.8}{\rmdefault}{\mddefault}{\itdefault}{$a_4$}%
}}}}}
\put(9975,-7261){\rotatebox{90.0}{\makebox(0,0)[lb]{\smash{{\SetFigFont{14}{16.8}{\rmdefault}{\mddefault}{\itdefault}{$z+a_2+a_4$}%
}}}}}
\put(10658,-4456){\makebox(0,0)[lb]{\smash{{\SetFigFont{14}{16.8}{\rmdefault}{\mddefault}{\itdefault}{$a_1$}%
}}}}
\put(11543,-4921){\makebox(0,0)[lb]{\smash{{\SetFigFont{14}{16.8}{\rmdefault}{\mddefault}{\itdefault}{$a_2$}%
}}}}
\put(12248,-4531){\makebox(0,0)[lb]{\smash{{\SetFigFont{14}{16.8}{\rmdefault}{\mddefault}{\itdefault}{$a_3$}%
}}}}
\put(12885,-7786){\rotatebox{60.0}{\makebox(0,0)[lb]{\smash{{\SetFigFont{14}{16.8}{\rmdefault}{\mddefault}{\itdefault}{$z+2a_2+2a_4$}%
}}}}}
\end{picture}}
  \caption{Splitting up a $H^{(1)}$-type region into two $Q$-type regions in the case of $x=0$.}\label{Halfhex2basex}
\end{figure}

\begin{figure}\centering
\setlength{\unitlength}{3947sp}%
\begingroup\makeatletter\ifx\SetFigFont\undefined%
\gdef\SetFigFont#1#2#3#4#5{%
  \reset@font\fontsize{#1}{#2pt}%
  \fontfamily{#3}\fontseries{#4}\fontshape{#5}%
  \selectfont}%
\fi\endgroup%
\resizebox{10cm}{!}{
\begin{picture}(0,0)%
\includegraphics{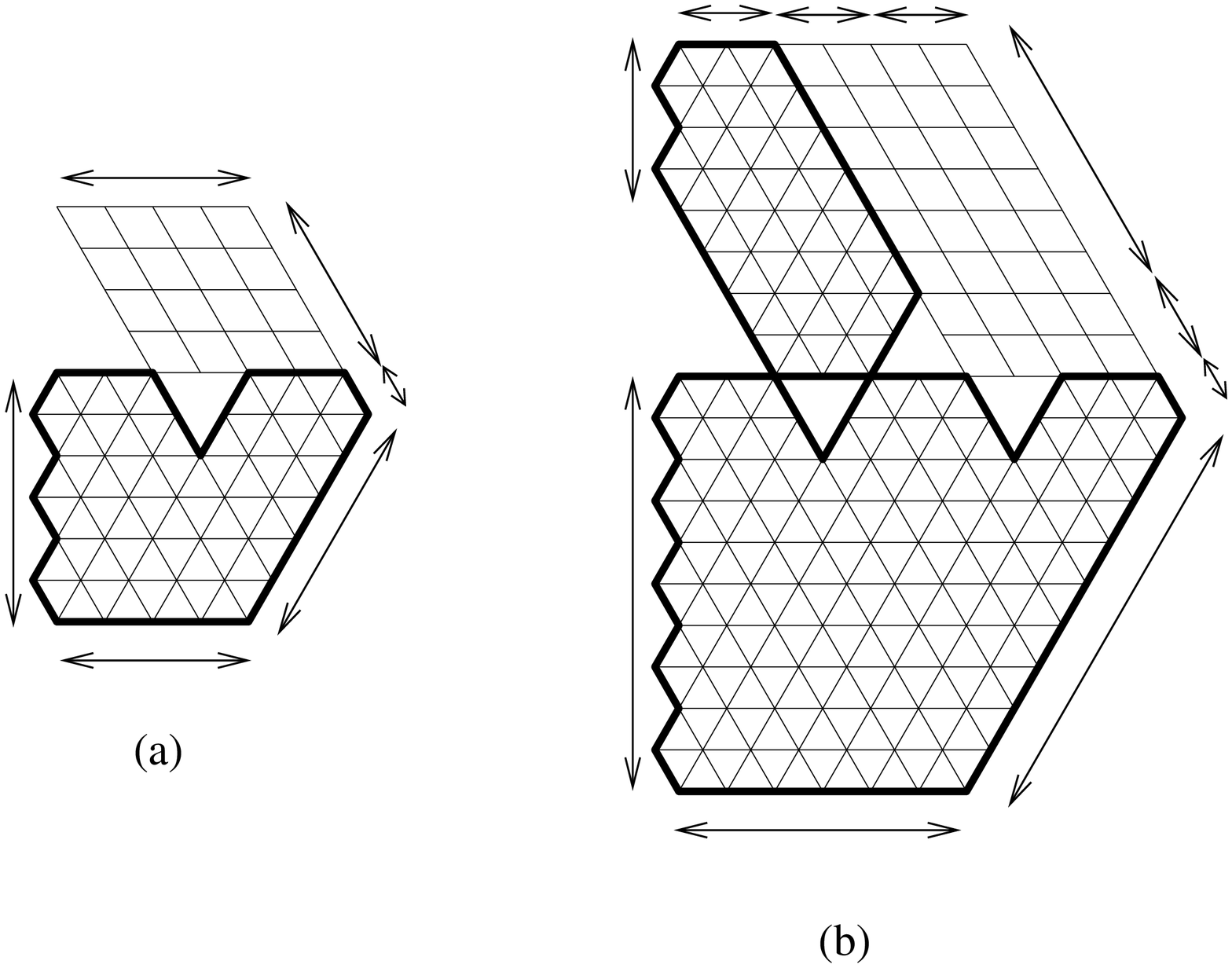}%
\end{picture}%

\begin{picture}(10741,8314)(4625,-8386)
\put(5830,-1770){\makebox(0,0)[lb]{\smash{{\SetFigFont{17}{20.4}{\rmdefault}{\mddefault}{\itdefault}{$x+a_2$}%
}}}}
\put(5665,-3420){\makebox(0,0)[lb]{\smash{{\SetFigFont{17}{20.4}{\rmdefault}{\mddefault}{\itdefault}{$a_1$}%
}}}}
\put(6520,-3795){\makebox(0,0)[lb]{\smash{{\SetFigFont{17}{20.4}{\rmdefault}{\mddefault}{\itdefault}{$a_2$}%
}}}}
\put(5845,-6240){\makebox(0,0)[lb]{\smash{{\SetFigFont{17}{20.4}{\rmdefault}{\mddefault}{\itdefault}{$x+a_2$}%
}}}}
\put(4930,-4980){\rotatebox{90.0}{\makebox(0,0)[lb]{\smash{{\SetFigFont{17}{20.4}{\rmdefault}{\mddefault}{\itdefault}{$z+a_2$}%
}}}}}
\put(7780,-5385){\rotatebox{60.0}{\makebox(0,0)[lb]{\smash{{\SetFigFont{17}{20.4}{\rmdefault}{\mddefault}{\itdefault}{$z+2a_2$}%
}}}}}
\put(8380,-3510){\rotatebox{300.0}{\makebox(0,0)[lb]{\smash{{\SetFigFont{17}{20.4}{\rmdefault}{\mddefault}{\itdefault}{$z$}%
}}}}}
\put(7690,-2505){\rotatebox{300.0}{\makebox(0,0)[lb]{\smash{{\SetFigFont{17}{20.4}{\rmdefault}{\mddefault}{\itdefault}{$2a_1$}%
}}}}}
\put(9976,-1726){\rotatebox{90.0}{\makebox(0,0)[lb]{\smash{{\SetFigFont{17}{20.4}{\rmdefault}{\mddefault}{\itdefault}{$a_3$}%
}}}}}
\put(10711,-376){\makebox(0,0)[lb]{\smash{{\SetFigFont{17}{20.4}{\rmdefault}{\mddefault}{\itdefault}{$a_2$}%
}}}}
\put(10696,-3391){\makebox(0,0)[lb]{\smash{{\SetFigFont{17}{20.4}{\rmdefault}{\mddefault}{\itdefault}{$a_1$}%
}}}}
\put(11536,-3841){\makebox(0,0)[lb]{\smash{{\SetFigFont{17}{20.4}{\rmdefault}{\mddefault}{\itdefault}{$a_2$}%
}}}}
\put(12301,-3436){\makebox(0,0)[lb]{\smash{{\SetFigFont{17}{20.4}{\rmdefault}{\mddefault}{\itdefault}{$a_3$}%
}}}}
\put(13111,-3796){\makebox(0,0)[lb]{\smash{{\SetFigFont{17}{20.4}{\rmdefault}{\mddefault}{\itdefault}{$a_4$}%
}}}}
\put(13681,-1216){\rotatebox{300.0}{\makebox(0,0)[lb]{\smash{{\SetFigFont{17}{20.4}{\rmdefault}{\mddefault}{\itdefault}{$2a_1+a_3$}%
}}}}}
\put(14641,-2836){\rotatebox{300.0}{\makebox(0,0)[lb]{\smash{{\SetFigFont{17}{20.4}{\rmdefault}{\mddefault}{\itdefault}{$a_3$}%
}}}}}
\put(15001,-3481){\rotatebox{300.0}{\makebox(0,0)[lb]{\smash{{\SetFigFont{17}{20.4}{\rmdefault}{\mddefault}{\itdefault}{$z$}%
}}}}}
\put(13981,-6466){\rotatebox{60.0}{\makebox(0,0)[lb]{\smash{{\SetFigFont{17}{20.4}{\rmdefault}{\mddefault}{\itdefault}{$z+2a_2+2a_4$}%
}}}}}
\put(10006,-5926){\rotatebox{90.0}{\makebox(0,0)[lb]{\smash{{\SetFigFont{17}{20.4}{\rmdefault}{\mddefault}{\itdefault}{$z+a_2+a_4$}%
}}}}}
\put(11011,-7681){\makebox(0,0)[lb]{\smash{{\SetFigFont{17}{20.4}{\rmdefault}{\mddefault}{\itdefault}{$x+a_1+a_3$}%
}}}}
\put(12361,-399){\makebox(0,0)[lb]{\smash{{\SetFigFont{17}{20.4}{\rmdefault}{\mddefault}{\itdefault}{$x$}%
}}}}
\put(11484,-384){\makebox(0,0)[lb]{\smash{{\SetFigFont{17}{20.4}{\rmdefault}{\mddefault}{\itdefault}{$a_4$}%
}}}}
\end{picture}}
\caption{Partitioning  a $H^{(1)}$-type region into two $Q$-type regions in the case of $y=0$.}\label{Halfhex2basey}
\end{figure}

\begin{figure}
  \centering
  \setlength{\unitlength}{3947sp}%
\begingroup\makeatletter\ifx\SetFigFont\undefined%
\gdef\SetFigFont#1#2#3#4#5{%
  \reset@font\fontsize{#1}{#2pt}%
  \fontfamily{#3}\fontseries{#4}\fontshape{#5}%
  \selectfont}%
\fi\endgroup%
\resizebox{10cm}{!}{
\begin{picture}(0,0)%
\includegraphics{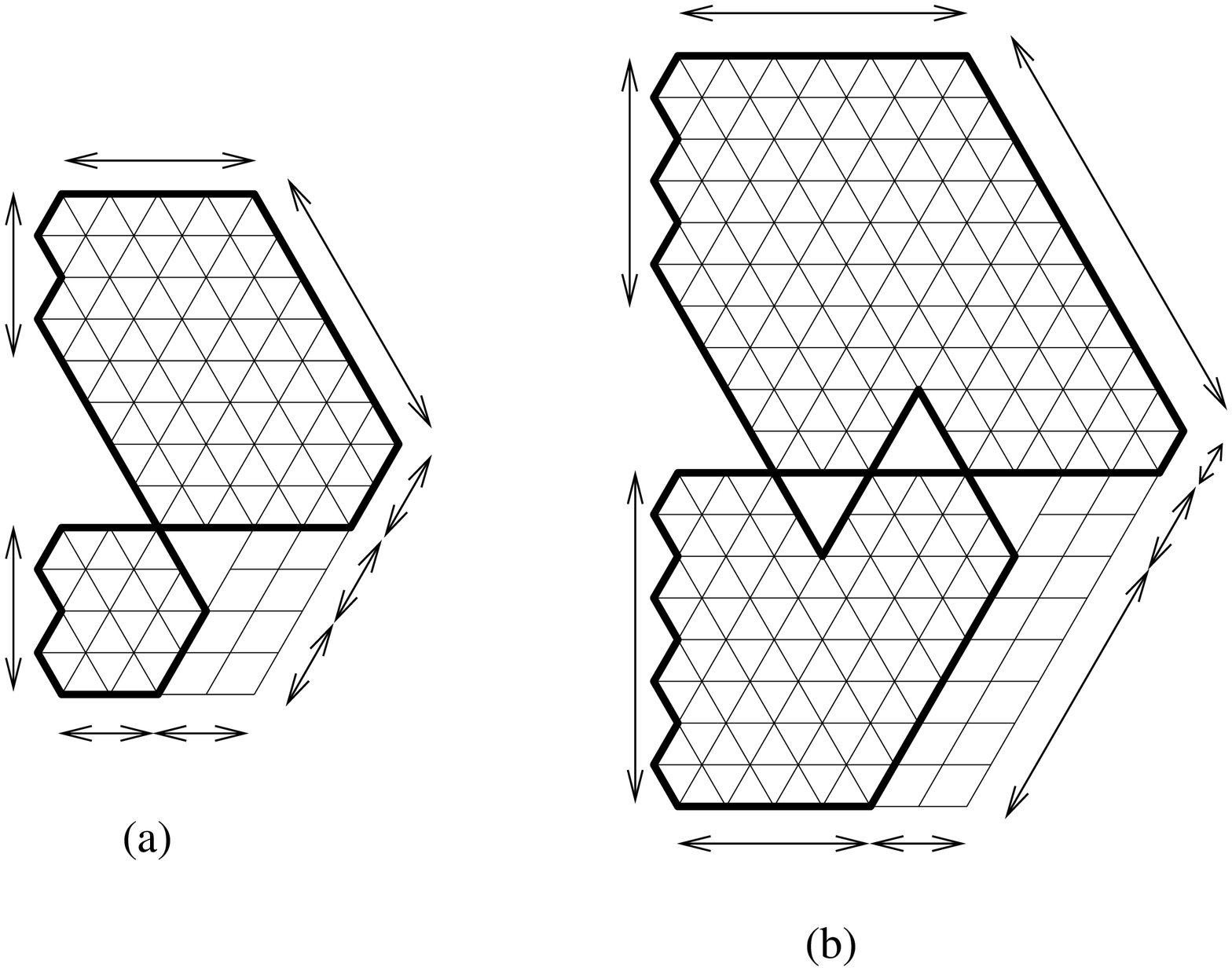}%
\end{picture}%
%
%

\begin{picture}(10580,8225)(5976,-8761)
\put(6956,-5011){\makebox(0,0)[lb]{\smash{{\SetFigFont{17}{20.4}{\rmdefault}{\mddefault}{\itdefault}{$a_1$}%
}}}}
\put(7856,-5401){\makebox(0,0)[lb]{\smash{{\SetFigFont{17}{20.4}{\rmdefault}{\mddefault}{\itdefault}{$a_2$}%
}}}}
\put(14377,-4965){\makebox(0,0)[lb]{\smash{{\SetFigFont{17}{20.4}{\rmdefault}{\mddefault}{\itdefault}{$a_4$}%
}}}}
\put(6941,-7141){\makebox(0,0)[lb]{\smash{{\SetFigFont{17}{20.4}{\rmdefault}{\mddefault}{\itdefault}{$a_1$}%
}}}}
\put(7796,-7156){\makebox(0,0)[lb]{\smash{{\SetFigFont{17}{20.4}{\rmdefault}{\mddefault}{\itdefault}{$x$}%
}}}}
\put(6281,-6166){\rotatebox{90.0}{\makebox(0,0)[lb]{\smash{{\SetFigFont{17}{20.4}{\rmdefault}{\mddefault}{\itdefault}{$a_2$}%
}}}}}
\put(11317,-6690){\rotatebox{90.0}{\makebox(0,0)[lb]{\smash{{\SetFigFont{17}{20.4}{\rmdefault}{\mddefault}{\itdefault}{$a_2+a_4$}%
}}}}}
\put(12082,-8040){\makebox(0,0)[lb]{\smash{{\SetFigFont{17}{20.4}{\rmdefault}{\mddefault}{\itdefault}{$a_1+a_3$}%
}}}}
\put(13552,-8025){\makebox(0,0)[lb]{\smash{{\SetFigFont{17}{20.4}{\rmdefault}{\mddefault}{\itdefault}{$x$}%
}}}}
\put(11941,-4561){\makebox(0,0)[lb]{\smash{{\SetFigFont{17}{20.4}{\rmdefault}{\mddefault}{\itdefault}{$a_1$}%
}}}}
\put(13552,-4620){\makebox(0,0)[lb]{\smash{{\SetFigFont{17}{20.4}{\rmdefault}{\mddefault}{\itdefault}{$a_3$}%
}}}}
\put(12832,-4965){\makebox(0,0)[lb]{\smash{{\SetFigFont{17}{20.4}{\rmdefault}{\mddefault}{\itdefault}{$a_2$}%
}}}}
\put(14956,-7366){\rotatebox{60.0}{\makebox(0,0)[lb]{\smash{{\SetFigFont{17}{20.4}{\rmdefault}{\mddefault}{\itdefault}{$2a_2+a_4$}%
}}}}}
\put(15976,-5596){\rotatebox{60.0}{\makebox(0,0)[lb]{\smash{{\SetFigFont{17}{20.4}{\rmdefault}{\mddefault}{\itdefault}{$a_4$}%
}}}}}
\put(16366,-4906){\rotatebox{60.0}{\makebox(0,0)[lb]{\smash{{\SetFigFont{17}{20.4}{\rmdefault}{\mddefault}{\itdefault}{$y$}%
}}}}}
\put(11257,-2805){\rotatebox{90.0}{\makebox(0,0)[lb]{\smash{{\SetFigFont{17}{20.4}{\rmdefault}{\mddefault}{\itdefault}{$y+a_3$}%
}}}}}
\put(6251,-3241){\rotatebox{90.0}{\makebox(0,0)[lb]{\smash{{\SetFigFont{17}{20.4}{\rmdefault}{\mddefault}{\itdefault}{$y$}%
}}}}}
\put(9071,-2776){\rotatebox{300.0}{\makebox(0,0)[lb]{\smash{{\SetFigFont{17}{20.4}{\rmdefault}{\mddefault}{\itdefault}{$y+2a_1$}%
}}}}}
\put(7316,-2056){\makebox(0,0)[lb]{\smash{{\SetFigFont{17}{20.4}{\rmdefault}{\mddefault}{\itdefault}{$x+a_2$}%
}}}}
\put(12307,-840){\makebox(0,0)[lb]{\smash{{\SetFigFont{17}{20.4}{\rmdefault}{\mddefault}{\itdefault}{$x+a_2+a_4$}%
}}}}
\put(15127,-1860){\rotatebox{300.0}{\makebox(0,0)[lb]{\smash{{\SetFigFont{17}{20.4}{\rmdefault}{\mddefault}{\itdefault}{$y+2a_1+2a_3$}%
}}}}}
\put(8981,-6661){\rotatebox{60.0}{\makebox(0,0)[lb]{\smash{{\SetFigFont{17}{20.4}{\rmdefault}{\mddefault}{\itdefault}{$a_2$}%
}}}}}
\put(9386,-6001){\rotatebox{60.0}{\makebox(0,0)[lb]{\smash{{\SetFigFont{17}{20.4}{\rmdefault}{\mddefault}{\itdefault}{$a_2$}%
}}}}}
\put(9806,-5191){\rotatebox{60.0}{\makebox(0,0)[lb]{\smash{{\SetFigFont{17}{20.4}{\rmdefault}{\mddefault}{\itdefault}{$y$}%
}}}}}
\end{picture}}
  \caption{The $H^{(1)}$-type region can be divided into two $Q$-type subregions when $z=0$.}\label{Halfhex2basez}
\end{figure}

We first prove Theorem \ref{main1}.

\begin{proof}[Proof of Theorem \ref{main1}]
We first prove (\ref{main1eq1}) by induction on $x+y+z$. The base cases are the situations when at least one of the parameters $x,y,z$  is equal to $0$.

If $x=0$, then we split the region $H^{(1)}_{0,y,z}(a,b)$ into two (balanced) subregions along the right  side of the $a$-hole: the lower subregion is the halved hexagon $\mathcal{P}_{b+z,b+z,a}$,  and the upper subregion is a union of the halved hexagon $\mathcal{P}_{y,y+a_1,a_2}$ and several forced vertical lozenges (see Figure \ref{Halfhex2basex}(a)). By Region-splitting Lemma \ref{RS}, we get
\begin{equation}
\M(H^{(1)}_{0,y,z}(a,b))=\M(\mathcal{P}_{b+z,b+z,a})\M(\mathcal{P}_{y,y+a,b}),
\end{equation}
and (\ref{main1eq1}) follows from Proctor's Theorem \ref{Proctiling}.

If $y=0$, then our region has several forced lozenges on the top as in Figure \ref{Halfhex2basey}(a). By removing these forced lozenges, we obtained an upside-down region $\mathcal{Q}(a,b,x,z)$. Then (\ref{main1eq1}) follows from Lemma \ref{QAR} in this case.

If $z=0$, similar to the case when $x=0$, we also split the region into two halved hexagons and several forced lozenges as in Figure \ref{Halfhex2basez}(a). Thus, our formula is also implied by  Proctor's Theorem \ref{Proctiling}.

\medskip

\begin{figure}\centering
\setlength{\unitlength}{3947sp}%
\begingroup\makeatletter\ifx\SetFigFont\undefined%
\gdef\SetFigFont#1#2#3#4#5{%
  \reset@font\fontsize{#1}{#2pt}%
  \fontfamily{#3}\fontseries{#4}\fontshape{#5}%
  \selectfont}%
\fi\endgroup%
\resizebox{10cm}{!}{
\begin{picture}(0,0)%
\includegraphics{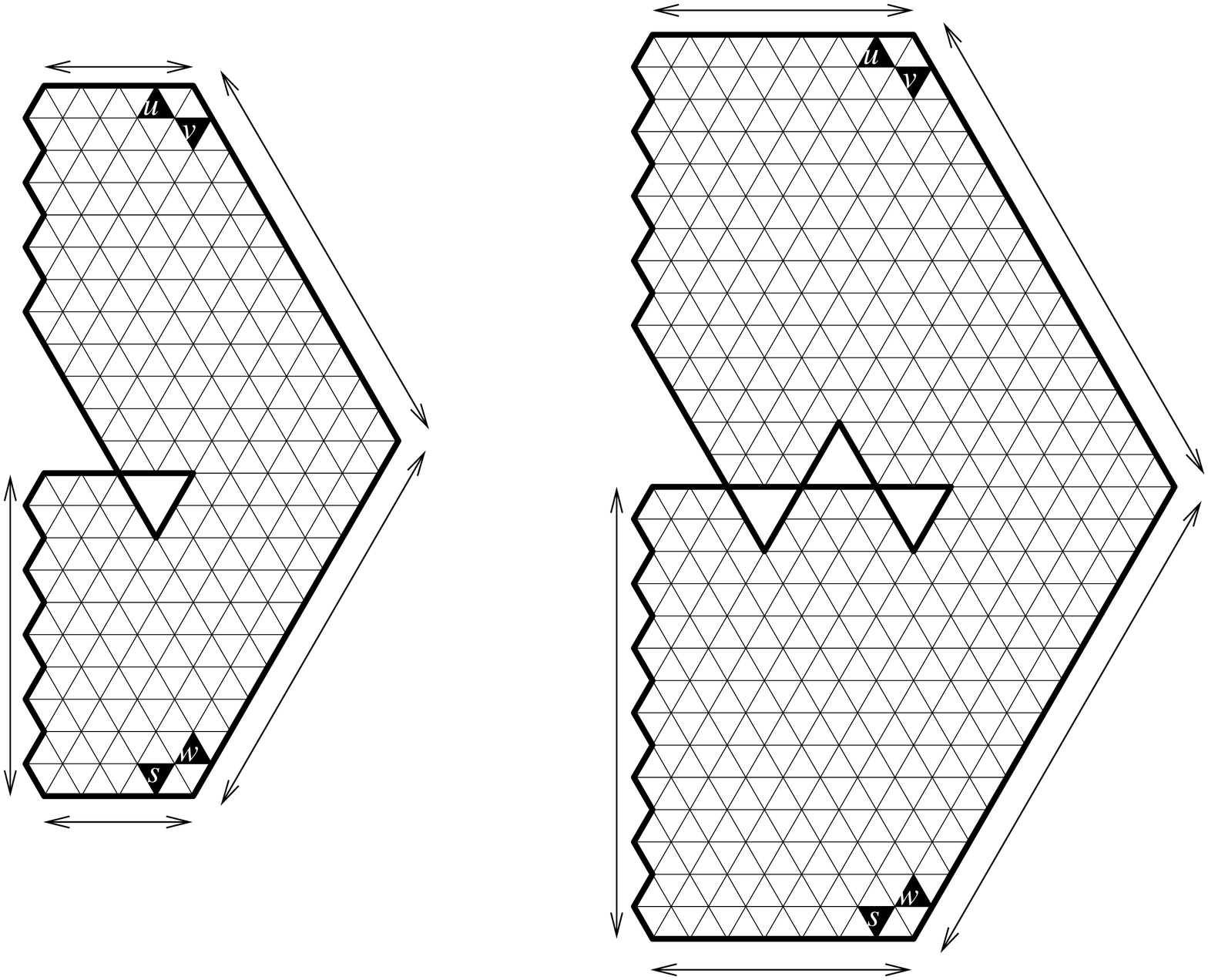}%
\end{picture}%
\begin{picture}(13014,10997)(2616,-10329)
\put(3601,-4404){\makebox(0,0)[lb]{\smash{{\SetFigFont{20}{24.0}{\rmdefault}{\mddefault}{\itdefault}{$a$}%
}}}}
\put(4546,-5094){\makebox(0,0)[lb]{\smash{{\SetFigFont{20}{24.0}{\rmdefault}{\mddefault}{\itdefault}{$b$}%
}}}}
\put(4216,-346){\makebox(0,0)[lb]{\smash{{\SetFigFont{20}{24.0}{\rmdefault}{\mddefault}{\itdefault}{$x+b$}%
}}}}
\put(3939,-8656){\makebox(0,0)[lb]{\smash{{\SetFigFont{20}{24.0}{\rmdefault}{\mddefault}{\itdefault}{$x+a$}%
}}}}
\put(6109,-1539){\rotatebox{300.0}{\makebox(0,0)[lb]{\smash{{\SetFigFont{20}{24.0}{\rmdefault}{\mddefault}{\itdefault}{$y+z+2a$}%
}}}}}
\put(6204,-7456){\rotatebox{60.0}{\makebox(0,0)[lb]{\smash{{\SetFigFont{20}{24.0}{\rmdefault}{\mddefault}{\itdefault}{$y+z+2b$}%
}}}}}
\put(2971,-7089){\rotatebox{90.0}{\makebox(0,0)[lb]{\smash{{\SetFigFont{20}{24.0}{\rmdefault}{\mddefault}{\itdefault}{$2z+2b$}%
}}}}}
\put(9931,-4704){\makebox(0,0)[lb]{\smash{{\SetFigFont{20}{24.0}{\rmdefault}{\mddefault}{\itdefault}{$a_1$}%
}}}}
\put(10839,-5161){\makebox(0,0)[lb]{\smash{{\SetFigFont{20}{24.0}{\rmdefault}{\mddefault}{\itdefault}{$a_2$}%
}}}}
\put(11574,-4764){\makebox(0,0)[lb]{\smash{{\SetFigFont{20}{24.0}{\rmdefault}{\mddefault}{\itdefault}{$a_3$}%
}}}}
\put(12384,-5154){\makebox(0,0)[lb]{\smash{{\SetFigFont{20}{24.0}{\rmdefault}{\mddefault}{\itdefault}{$a_4$}%
}}}}
\put(10600,314){\makebox(0,0)[lb]{\smash{{\SetFigFont{20}{24.0}{\rmdefault}{\mddefault}{\itdefault}{$x+a_2+a_4$}%
}}}}
\put(13756,-1216){\rotatebox{300.0}{\makebox(0,0)[lb]{\smash{{\SetFigFont{20}{24.0}{\rmdefault}{\mddefault}{\itdefault}{$y+z+2a_1+2a_3$}%
}}}}}
\put(13794,-8829){\rotatebox{60.0}{\makebox(0,0)[lb]{\smash{{\SetFigFont{20}{24.0}{\rmdefault}{\mddefault}{\itdefault}{$y+z+2a_2+2a_4$}%
}}}}}
\put(10291,-10314){\makebox(0,0)[lb]{\smash{{\SetFigFont{20}{24.0}{\rmdefault}{\mddefault}{\itdefault}{$x+a_1+a_3$}%
}}}}
\put(9309,-8416){\rotatebox{90.0}{\makebox(0,0)[lb]{\smash{{\SetFigFont{20}{24.0}{\rmdefault}{\mddefault}{\itdefault}{$2z+2a_2+2a_4$}%
}}}}}
\end{picture}}
\caption{How to apply Kuo condensation to a $H^{(1)}$-region.}\label{MidarrayKuo1}
\end{figure}

\begin{figure}
  \centering
  \setlength{\unitlength}{3947sp}%
\begingroup\makeatletter\ifx\SetFigFont\undefined%
\gdef\SetFigFont#1#2#3#4#5{%
  \reset@font\fontsize{#1}{#2pt}%
  \fontfamily{#3}\fontseries{#4}\fontshape{#5}%
  \selectfont}%
\fi\endgroup%
\resizebox{13cm}{!}{
\begin{picture}(0,0)%
\includegraphics{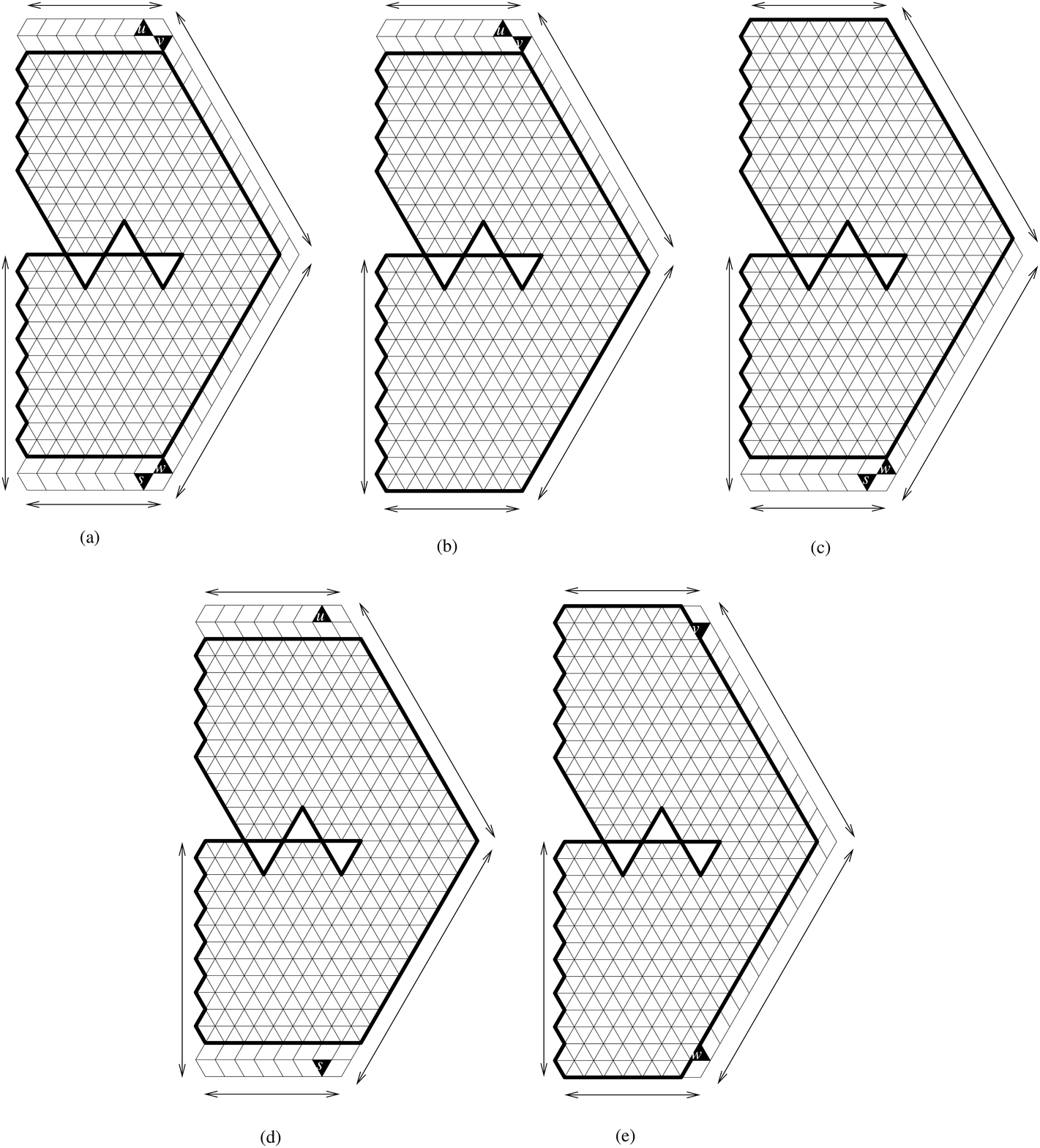}%
\end{picture}%

\begin{picture}(21196,23409)(3250,-22730)
\put(4696,329){\makebox(0,0)[lb]{\smash{{\SetFigFont{20}{24.0}{\rmdefault}{\mddefault}{\itdefault}{$x+a_2+a_4$}%
}}}}
\put(4546,-10171){\makebox(0,0)[lb]{\smash{{\SetFigFont{20}{24.0}{\rmdefault}{\mddefault}{\itdefault}{$x+a_1+a_3$}%
}}}}
\put(7951,-8971){\rotatebox{60.0}{\makebox(0,0)[lb]{\smash{{\SetFigFont{20}{24.0}{\rmdefault}{\mddefault}{\itdefault}{$y+z+2a_2+2a_4$}%
}}}}}
\put(7711,-601){\rotatebox{300.0}{\makebox(0,0)[lb]{\smash{{\SetFigFont{20}{24.0}{\rmdefault}{\mddefault}{\itdefault}{$y+z+2a_1+2a_3$}%
}}}}}
\put(3601,-8536){\rotatebox{90.0}{\makebox(0,0)[lb]{\smash{{\SetFigFont{20}{24.0}{\rmdefault}{\mddefault}{\itdefault}{$2z+2a_2+2a_4$}%
}}}}}
\put(4246,-4681){\makebox(0,0)[lb]{\smash{{\SetFigFont{20}{24.0}{\rmdefault}{\mddefault}{\itdefault}{$a_1$}%
}}}}
\put(5176,-5056){\makebox(0,0)[lb]{\smash{{\SetFigFont{20}{24.0}{\rmdefault}{\mddefault}{\itdefault}{$a_2$}%
}}}}
\put(5941,-4696){\makebox(0,0)[lb]{\smash{{\SetFigFont{20}{24.0}{\rmdefault}{\mddefault}{\itdefault}{$a_3$}%
}}}}
\put(6751,-5056){\makebox(0,0)[lb]{\smash{{\SetFigFont{20}{24.0}{\rmdefault}{\mddefault}{\itdefault}{$a_4$}%
}}}}
\put(11409,-4711){\makebox(0,0)[lb]{\smash{{\SetFigFont{20}{24.0}{\rmdefault}{\mddefault}{\itdefault}{$a_1$}%
}}}}
\put(18684,-4726){\makebox(0,0)[lb]{\smash{{\SetFigFont{20}{24.0}{\rmdefault}{\mddefault}{\itdefault}{$a_1$}%
}}}}
\put(7749,-16441){\makebox(0,0)[lb]{\smash{{\SetFigFont{20}{24.0}{\rmdefault}{\mddefault}{\itdefault}{$a_1$}%
}}}}
\put(14964,-16441){\makebox(0,0)[lb]{\smash{{\SetFigFont{20}{24.0}{\rmdefault}{\mddefault}{\itdefault}{$a_1$}%
}}}}
\put(12354,-5071){\makebox(0,0)[lb]{\smash{{\SetFigFont{20}{24.0}{\rmdefault}{\mddefault}{\itdefault}{$a_2$}%
}}}}
\put(19644,-5071){\makebox(0,0)[lb]{\smash{{\SetFigFont{20}{24.0}{\rmdefault}{\mddefault}{\itdefault}{$a_2$}%
}}}}
\put(15894,-16846){\makebox(0,0)[lb]{\smash{{\SetFigFont{20}{24.0}{\rmdefault}{\mddefault}{\itdefault}{$a_2$}%
}}}}
\put(8709,-16831){\makebox(0,0)[lb]{\smash{{\SetFigFont{20}{24.0}{\rmdefault}{\mddefault}{\itdefault}{$a_2$}%
}}}}
\put(9474,-16471){\makebox(0,0)[lb]{\smash{{\SetFigFont{20}{24.0}{\rmdefault}{\mddefault}{\itdefault}{$a_3$}%
}}}}
\put(10299,-16831){\makebox(0,0)[lb]{\smash{{\SetFigFont{20}{24.0}{\rmdefault}{\mddefault}{\itdefault}{$a_4$}%
}}}}
\put(17499,-16846){\makebox(0,0)[lb]{\smash{{\SetFigFont{20}{24.0}{\rmdefault}{\mddefault}{\itdefault}{$a_4$}%
}}}}
\put(16659,-16471){\makebox(0,0)[lb]{\smash{{\SetFigFont{20}{24.0}{\rmdefault}{\mddefault}{\itdefault}{$a_3$}%
}}}}
\put(13089,-4741){\makebox(0,0)[lb]{\smash{{\SetFigFont{20}{24.0}{\rmdefault}{\mddefault}{\itdefault}{$a_3$}%
}}}}
\put(20409,-4741){\makebox(0,0)[lb]{\smash{{\SetFigFont{20}{24.0}{\rmdefault}{\mddefault}{\itdefault}{$a_3$}%
}}}}
\put(21219,-5086){\makebox(0,0)[lb]{\smash{{\SetFigFont{20}{24.0}{\rmdefault}{\mddefault}{\itdefault}{$a_4$}%
}}}}
\put(10786,-8911){\rotatebox{90.0}{\makebox(0,0)[lb]{\smash{{\SetFigFont{20}{24.0}{\rmdefault}{\mddefault}{\itdefault}{$2z+2a_2+2a_4$}%
}}}}}
\put(18076,-8761){\rotatebox{90.0}{\makebox(0,0)[lb]{\smash{{\SetFigFont{20}{24.0}{\rmdefault}{\mddefault}{\itdefault}{$2z+2a_2+2a_4$}%
}}}}}
\put(7066,-20266){\rotatebox{90.0}{\makebox(0,0)[lb]{\smash{{\SetFigFont{20}{24.0}{\rmdefault}{\mddefault}{\itdefault}{$2z+2a_2+2a_4$}%
}}}}}
\put(14356,-20581){\rotatebox{90.0}{\makebox(0,0)[lb]{\smash{{\SetFigFont{20}{24.0}{\rmdefault}{\mddefault}{\itdefault}{$2z+2a_2+2a_4$}%
}}}}}
\put(11799,-10276){\makebox(0,0)[lb]{\smash{{\SetFigFont{20}{24.0}{\rmdefault}{\mddefault}{\itdefault}{$x+a_1+a_3$}%
}}}}
\put(19029,-10276){\makebox(0,0)[lb]{\smash{{\SetFigFont{20}{24.0}{\rmdefault}{\mddefault}{\itdefault}{$x+a_1+a_3$}%
}}}}
\put(15339,-22036){\makebox(0,0)[lb]{\smash{{\SetFigFont{20}{24.0}{\rmdefault}{\mddefault}{\itdefault}{$x+a_1+a_3$}%
}}}}
\put(8154,-22021){\makebox(0,0)[lb]{\smash{{\SetFigFont{20}{24.0}{\rmdefault}{\mddefault}{\itdefault}{$x+a_1+a_3$}%
}}}}
\put(11829,284){\makebox(0,0)[lb]{\smash{{\SetFigFont{20}{24.0}{\rmdefault}{\mddefault}{\itdefault}{$x+a_2+a_4$}%
}}}}
\put(19104,329){\makebox(0,0)[lb]{\smash{{\SetFigFont{20}{24.0}{\rmdefault}{\mddefault}{\itdefault}{$x+a_2+a_4$}%
}}}}
\put(8199,-11446){\makebox(0,0)[lb]{\smash{{\SetFigFont{20}{24.0}{\rmdefault}{\mddefault}{\itdefault}{$x+a_2+a_4$}%
}}}}
\put(15444,-11416){\makebox(0,0)[lb]{\smash{{\SetFigFont{20}{24.0}{\rmdefault}{\mddefault}{\itdefault}{$x+a_2+a_4$}%
}}}}
\put(15160,-873){\rotatebox{300.0}{\makebox(0,0)[lb]{\smash{{\SetFigFont{20}{24.0}{\rmdefault}{\mddefault}{\itdefault}{$y+z+2a_1+2a_3$}%
}}}}}
\put(22615,-873){\rotatebox{300.0}{\makebox(0,0)[lb]{\smash{{\SetFigFont{20}{24.0}{\rmdefault}{\mddefault}{\itdefault}{$y+z+2a_1+2a_3$}%
}}}}}
\put(15226,-9089){\rotatebox{60.0}{\makebox(0,0)[lb]{\smash{{\SetFigFont{20}{24.0}{\rmdefault}{\mddefault}{\itdefault}{$y+z+2a_2+2a_4$}%
}}}}}
\put(22651,-8969){\rotatebox{60.0}{\makebox(0,0)[lb]{\smash{{\SetFigFont{20}{24.0}{\rmdefault}{\mddefault}{\itdefault}{$y+z+2a_2+2a_4$}%
}}}}}
\put(13899,-5071){\makebox(0,0)[lb]{\smash{{\SetFigFont{20}{24.0}{\rmdefault}{\mddefault}{\itdefault}{$a_4$}%
}}}}
\put(11830,-13068){\rotatebox{300.0}{\makebox(0,0)[lb]{\smash{{\SetFigFont{20}{24.0}{\rmdefault}{\mddefault}{\itdefault}{$y+z+2a_1+2a_3$}%
}}}}}
\put(18916,-12961){\rotatebox{300.0}{\makebox(0,0)[lb]{\smash{{\SetFigFont{20}{24.0}{\rmdefault}{\mddefault}{\itdefault}{$y+z+2a_1+2a_3$}%
}}}}}
\end{picture}}
  \caption{Obtaining a recurrence for the number of tilings of $H^{(1)}$-type region.}\label{MidarrayKuo2}
\end{figure}

For the induction step, we assume $x,y,z$ are all positive and that (\ref{main1eq1}) holds for any $H^{(1)}$-type region with two holes whose sum of the $x$-, $y$- and $z$-parameters is strictly less than $x+y+z$. We need to verify (\ref{main1eq1})  for the region $H^{(1)}_{x,y,z}(a,b)$.

We apply Kuo condensation to the dual graph $G$ of the region $H^{(1)}_{x,y,z}(\textbf{a})$, for a general sequence of nonegative integers $\textbf{a}=(a_1,a_2,\dotsc,a_n)$ with the four vertices $u,v,w,s$ chosen as in Figure \ref{MidarrayKuo1}. In particular, each of the four vertices $u,v,w,s$ corresponds to the black unit triangle of the same label. The $u$- and $v$-unit triangles are the up-pointing and down-pointing unit triangles at the upper-right corner of the region, and the $w$- and $s$-unit triangles are the up-pointing and down-pointing unit triangles at the lower-right corner of the region.

Consider the region corresponding to the graph $G-\{u,v,w,s\}$. The removal of the black unit triangles yields several forced lozenges. By removing these forced lozenges, we get the region
$H^{(1)}_{x,y-1,z-1}(\textbf{a})$ (see the region restricted by the bold contour in Figure \ref{MidarrayKuo2}(a)) and obtain
\begin{equation}
\M(G-\{u,v,w,s\})=\M(H^{(1)}_{x,y-1,z-1}(\textbf{a})).
\end{equation}

Similarly, we have
\begin{equation}
\M(G-\{u,v\})=\M(H^{(1)}_{x,y-1,z}(\textbf{a})),
\end{equation}
\begin{equation}
\M(G-\{w,s\})=\M(H^{(1)}_{x,y,z-1}(\textbf{a})),
\end{equation}
\begin{equation}
\M(G-\{u,s\})=\M(H^{(1)}_{x+1,y-1,z-1}(\textbf{a})),
\end{equation}
and
\begin{equation}
\M(G-\{v,w\})=\M(H^{(1)}_{x-1,y,z}(\textbf{a}))
\end{equation}
(see Figures \ref{MidarrayKuo2}(b)--(e), respectively).
Plugging the above five identities into the equation (\ref{Kuoeq}) in Kuo's Theorem \ref{Kuothm}, we get the following recurrence:
\begin{align}\label{recur1}
\M(H^{(1)}_{x,y,z}(\textbf{a}))\M(H^{(1)}_{x,y-1,z-1}(\textbf{a}))=\M(H^{(1)}_{x,y-1,z}(\textbf{a}))\M(H^{(1)}_{x,y,z-1}(\textbf{a}))+\M(H^{(1)}_{x+1,y-1,z-1}(\textbf{a}))\M(H^{(1)}_{x-1,y,z}(\textbf{a})),
\end{align}
for any sequence of positive integers $\textbf{a}=(a_1,a_2,\dotsc,a_n)$. In particular, the number tilings of the region $H^{(1)}_{x,y,z}(a,b)$ satisfies this recurrence (by setting $n=2$, $a_1=a$, $a_2=b$).
To finish the proof of  (\ref{main1eq1}) we only need to verify that the expression on the right-hand side of  (\ref{main1eq1}) satisfies the same recurrence (\ref{recur1}). Indeed, denote by $\phi_{x,y,z}(a,b)$ this expression.
 We need to verify that
\begin{align}
\phi_{x,y,z}(a,b)\phi_{x,y-1,z-1}(a,b)=\phi_{x,y-1,z}(a,b)\phi_{x,y,z-1}(a,b)+\phi_{x+1,y-1,z-1}(a,b)\phi_{x-1,y,z}(a,b).
\end{align}
It is equivalent to show that
\begin{align}\label{recur1b}
\frac{\phi_{x,y-1,z}(a,b)}{\phi_{x,y-1,z-1}(a,b)}\frac{\phi_{x,y,z-1}(a,b)}{\phi_{x,y,z}(a,b)}+\frac{\phi_{x+1,y-1,z-1}(a,b)}{\phi_{x,y-1,z-1}(a,b)}\frac{\phi_{x-1,y,z}(a,b)}{\phi_{x,y,z}(a,b)}=1.
\end{align}

We have several claims that are direct consequence of Theorem \ref{Proctiling}, the definition of the function $\T$, and Lemma \ref{QAR}:

\begin{claim}\label{claimP}
\begin{align}
\frac{\Pn_{x,x,a}}{\Pn_{x-1,x-1,a}}=\frac{a+x}{x}\frac{(2a+2x-1)!}{(2a+x)!}\frac{x!}{(2x-1)!}.
\end{align}
\end{claim}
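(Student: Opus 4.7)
The claim is a purely algebraic identity that follows directly from Proctor's formula in Theorem \ref{Proctiling}. The plan is to specialize that formula to the parameters $(a,b,c) \to (x,x,a)$ and then take the ratio in $x$, exploiting the massive cancellation between consecutive $x$ values.

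With this specialization the quantity $b-a+1$ equals $1$, so in Theorem \ref{Proctiling} the first inner $j$-product collapses to the single factor $(a+i)/i$, while the second $j$-product becomes $\prod_{j=2}^{i}\frac{2a+i+j-1}{i+j-1}$. Reindexing via $k=j-1$, the numerator telescopes to $(2a+2i-1)!/(2a+i)!$ and the denominator to $(2i-1)!/i!$, giving the closed form
$$\Pn_{x,x,a} \;=\; \prod_{i=1}^{x} \frac{a+i}{i}\cdot\frac{(2a+2i-1)!\,i!}{(2a+i)!\,(2i-1)!}.$$
Forming the ratio $\Pn_{x,x,a}/\Pn_{x-1,x-1,a}$ then cancels every factor with $i \le x-1$, leaving exactly the $i=x$ term, which is precisely the right-hand side of the claim.

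There is no substantive obstacle: the entire argument is short factorial bookkeeping on a single-index product. The only thing to check is the degenerate case $x=1$, where $\Pn_{0,0,a}$ is an empty product equal to $1$; the right-hand side also evaluates correctly under the standard conventions, so the identity holds uniformly for all $x \ge 1$.
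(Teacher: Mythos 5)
Your proof is correct and follows exactly the route the paper intends: the paper states Claim \ref{claimP} as a ``direct consequence of Theorem \ref{Proctiling}'' without further detail, and your specialization $(a,b,c)\to(x,x,a)$, the collapse of the first inner product to $(a+i)/i$, the telescoping of the second to $(2a+2i-1)!\,i!/\bigl((2a+i)!\,(2i-1)!\bigr)$, and the cancellation of all $i\le x-1$ factors in the ratio supply precisely that omitted computation. Nothing is missing.
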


\begin{claim}\label{claimT}
\begin{align}
\frac{\T(x,n,m)}{\T(x-1,n,m)}=\frac{(x+n-m)_m}{(x-1)_m}.
\end{align}
\end{claim}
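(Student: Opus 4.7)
The plan is to unfold the definition of $\T$ and exploit a telescoping within each Pochhammer factor. Recall that $\T(x,n,m)=\prod_{i=0}^{m-1}(x+i)_{n-2i}$. Taking the ratio term by term,
\[
\frac{\T(x,n,m)}{\T(x-1,n,m)} \;=\; \prod_{i=0}^{m-1}\frac{(x+i)_{n-2i}}{(x+i-1)_{n-2i}}.
\]
For each $i$, the Pochhammer symbols in the numerator and denominator have the same length $n-2i$ but differ only in their starting point by one, so almost every factor cancels. Explicitly, writing
\[
(x+i-1)_{n-2i} \;=\; (x+i-1)\cdot (x+i)_{n-2i-1}, \qquad (x+i)_{n-2i} \;=\; (x+i)_{n-2i-1}\cdot (x+n-i-1),
\]
the $i$-th factor reduces to $(x+n-i-1)/(x+i-1)$.

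It then remains to collect these across $i=0,\dots,m-1$. The numerator product is
\[
\prod_{i=0}^{m-1}(x+n-i-1) \;=\; (x+n-m)(x+n-m+1)\cdots(x+n-1) \;=\; (x+n-m)_m
\]
after reindexing $i \mapsto m-1-i$, while the denominator product is $\prod_{i=0}^{m-1}(x+i-1) = (x-1)_m$. Combining these gives exactly the right-hand side of the claim.

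There is no real obstacle here: the argument is a one-step telescoping. The only point that deserves a brief check is the edge case $n-2i\leq 0$, where one must invoke the conventions in the definition of $(x)_n$. When $n-2i=0$ both Pochhammer symbols equal $1$, which agrees with $(x+n-i-1)/(x+i-1)=1$ since $n=2i$; when $n-2i<0$, the same cancellation works with the reciprocal convention. Thus the identity holds in all cases covered by the definition of $\T$.
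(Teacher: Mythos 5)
Your proof is correct and matches the paper's intent: the paper states Claim \ref{claimT} without proof, calling it a direct consequence of the definition of $\T$, and your term-by-term telescoping $\frac{(x+i)_{n-2i}}{(x+i-1)_{n-2i}}=\frac{x+n-i-1}{x+i-1}$ followed by collecting the products into $(x+n-m)_m/(x-1)_m$ is exactly that routine computation. Your check of the cases $n-2i=0$ and $n-2i<0$ (where the reciprocal convention indeed yields the same factor $\frac{x+n-i-1}{x+i-1}$) is a worthwhile addition, since in the paper's applications $n\geq 2m-1$ is not guaranteed.
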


\begin{claim}\label{claimQ} For any sequence $\textbf{t}=(t_1,t_2,\dotsc,t_{2l})$
\begin{align}
\frac{\Q(t_1,\dots,t_{2l}+1)}{\Q(t_1,\dots,t_{2l})}=&\frac{(\s_{2l}(\textbf{t})+1)(2\s_{2l}(\textbf{t})+1)!}{(2\E(\textbf{t})+1)!}\notag\\
&\times\prod_{i=1}^{l}\frac{(\s_{2l}(\textbf{t})-\s_{2i-1}(\textbf{t}))!}{(\s_{2l}(\textbf{t})+\s_{2i-1}(\textbf{t})+1)!}\prod_{i=1}^{l-1}\frac{(\s_{2l}(\textbf{t})+\s_{2i}(\textbf{t})+1)!}{(\s_{2l}(\textbf{t})-\s_{2i}(\textbf{t}))!}.
\end{align}
\end{claim}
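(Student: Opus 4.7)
My plan is to derive Claim \ref{claimQ} as a direct algebraic consequence of the explicit formula \eqref{QARa} for $\Q(\textbf{t})$ in Lemma \ref{QAR}, by substituting it into the ratio and cancelling every factor that does not depend on $t_{2l}$. The starting observation is that incrementing $t_{2l}$ by $1$ changes only two auxiliary quantities appearing in \eqref{QARa}: $\s_{2l}(\textbf{t})$ increases by $1$ and $\E(\textbf{t})$ increases by $1$, while every other $\s_k(\textbf{t})$ with $k<2l$, as well as $\Od(\textbf{t})$, is unaffected.

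Writing $S:=\s_{2l}(\textbf{t})$ and $E:=\E(\textbf{t})$ for brevity, I would next isolate the contribution to the ratio from each of the four groups of factors in \eqref{QARa}. The single term $(\s_{2l})!/(\s_{2l-1})!$ inside $\prod_{i=1}^{l}(\s_{2i})!/(\s_{2i-1})!$ contributes a factor of $S+1$. The denominator $\Hf_2(2\E(\textbf{t})+1)$ contributes $1/(2E+1)!$ via the identity $\Hf_2(n+2)/\Hf_2(n)=n!$. The $i=l$ summand of $\prod_{i=1}^{l}\Hf_2(2\s_{2i}+1)\Hf(2\s_{2i-1}+2)/\Hf_2(2\s_{2i-1}+3)$ is affected only through $\Hf_2(2\s_{2l}+1)$ and therefore contributes $(2S+1)!$. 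Finally, in the two double products, only the pairs $(i,2l)$ with $i<2l$ are perturbed; applying $\Hf(m+1)/\Hf(m)=m!$ twice to each such pair produces $(S-\s_i)!/(S+\s_i+1)!$ when $i$ is odd (so $2l-i$ is odd) and $(S+\s_i+1)!/(S-\s_i)!$ when $i$ is even. Reindexing the odd $i$'s as $2i-1$ for $i=1,\dots,l$ and the even $i$'s as $2i$ for $i=1,\dots,l-1$ assembles precisely the two products on the right-hand side of the claim.

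The only real obstacle is careful bookkeeping: one must scan \eqref{QARa} systematically for every occurrence of $\s_{2l}$ or $\E(\textbf{t})$, verify that the other $\s_k(\textbf{t})$ and $\Od(\textbf{t})$ are truly constant, and avoid double-counting pair contributions between the two double products. The degenerate case $l=1$ is automatic, the empty product $\prod_{i=1}^{0}$ matching the absence of any even index $i<2l=2$. No new combinatorial idea is required; the identity is a purely algebraic cancellation built on Lemma \ref{QAR}.
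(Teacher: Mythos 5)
Your proposal is correct and is exactly the argument the paper intends: the paper offers no written derivation, stating only that Claim \ref{claimQ} is a direct consequence of Lemma \ref{QAR}, and your substitution of \eqref{QARa} into the ratio is that computation carried out explicitly. All four group contributions check out --- the factor $\s_{2l}(\textbf{t})+1$ from $(\s_{2l})!/(\s_{2l-1})!$, the factor $1/(2\E(\textbf{t})+1)!$ and $(2\s_{2l}(\textbf{t})+1)!$ from the two skipping hyperfactorials via $\Hf_2(n+2)/\Hf_2(n)=n!$, and the pair factors for $j=2l$ with $i$ odd giving $(\s_{2l}-\s_{2i-1})!/(\s_{2l}+\s_{2i-1}+1)!$ for $i=1,\dotsc,l$ and $i$ even giving the reciprocal form for $i=1,\dotsc,l-1$ --- so the bookkeeping closes with no gap.
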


\begin{claim}\label{claimQ2}
For any sequence $\textbf{t}=(t_1,t_2,t_3,t_4)$
\begin{align}
&\frac{\Q(t_1,t_2,t_3,t_4)}{\Q(t_1,t_2,t_3-1,t_4)}=\frac{\s_{4}(\textbf{t})}{\s_{3}(\textbf{t})} (2\s_4(\textbf{t})-1)!(2\s_{3}(\textbf{t}))!\notag\\
&\times\frac{(\s_{4}(\textbf{t})-\s_{1}(\textbf{t})-1)!(\s_{3}(\textbf{t})-\s_{2}(\textbf{t})-1)!(\s_{4}(\textbf{t})+\s_{2}(\textbf{t}))!(\s_{3}(\textbf{t})+\s_{1}(\textbf{t}))!}{(\s_{4}(\textbf{t})-\s_{2}(\textbf{t})-1)!(\s_{3}(\textbf{t})-\s_{1}(\textbf{t})-1)!(\s_{4}(\textbf{t})+\s_{1}(\textbf{t}))!(\s_{3}(\textbf{t})+\s_{2}(\textbf{t}))!(s_4(\textbf{t})+s_3(\textbf{t})-1)!(s_4(\textbf{t})+s_3(\textbf{t}))!}.
\end{align}
\end{claim}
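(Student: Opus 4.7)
The statement is a purely computational identity that follows by direct substitution into the product formula (\ref{QARa}) for $\Q(\textbf{t})$ from Lemma \ref{QAR}. My plan is therefore to take the ratio term by term and identify which factors actually move when $t_3$ is decreased by $1$.

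The key observation is that, writing $A=\s_1(\textbf{t})$, $B=\s_2(\textbf{t})$, $C=\s_3(\textbf{t})$, $D=\s_4(\textbf{t})$, the substitution $t_3\mapsto t_3-1$ fixes $A$ and $B$ and replaces $(C,D)$ by $(C-1,D-1)$; in particular $\E(\textbf{t})=t_2+t_4$ is preserved, so the prefactor $1/\Hf_2(2\E(\textbf{t})+1)$ drops out of the ratio. For the front product $\prod_{i=1}^{2}\frac{\s_{2i}(\textbf{t})!}{\s_{2i-1}(\textbf{t})!}$, only the $i=2$ piece $D!/C!$ changes, and its ratio against $(D-1)!/(C-1)!$ yields the overall factor $D/C$.

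Next I would handle the second product $\prod_{i=1}^{2}\frac{\Hf_2(2\s_{2i}(\textbf{t})+1)\Hf(2\s_{2i-1}(\textbf{t})+2)}{\Hf_2(2\s_{2i-1}(\textbf{t})+3)}$: only the $i=2$ term changes, and using the identities $\Hf_2(n+2)/\Hf_2(n)=n!$ (skipping hyperfactorial) and $\Hf(n+2)/\Hf(n)=(n+1)!\,n!$, the ratio collapses to $(2D-1)!(2C)!$, matching the corresponding factors in the claim. For the double products over pairs $(i,j)$ with $j-i$ odd (respectively even), I would enumerate the six pairs from $\{1,2,3,4\}$ and observe that the pairs not involving index $3$ or $4$ are preserved; thus only the pairs $(2,3)$, $(3,4)$, $(1,4)$ contribute to the odd part, and $(1,3)$, $(2,4)$ to the even part. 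Applying $\Hf(n)/\Hf(n-1)=(n-1)!$ to each moving $\Hf$ factor produces the factorial ratio
\[
\frac{(D-A-1)!(C-B-1)!(D+B)!(C+A)!}{(D-B-1)!(C-A-1)!(D+A)!(C+B)!(D+C-1)!(D+C)!},
\]
with the $(D+C-1)!(D+C)!$ in the denominator coming precisely from the fact that the pair $(3,4)$ has $j-i=1$ odd and the sum $\s_4+\s_3+1$ drops by $2$ when $t_3$ decreases by $1$, so $\Hf(D+C+1)/\Hf(D+C-1)=(D+C-1)!(D+C)!$.

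Combining the three pieces gives the stated expression. The main obstacle is purely bookkeeping: one must correctly group the six ordered pairs from $\{1,2,3,4\}$ by parity of $j-i$, keep the signs of the exponent shifts consistent, and avoid off-by-one errors in the $\Hf$ and $\Hf_2$ ratios. No new ideas are needed beyond Lemma \ref{QAR} and the elementary identities $\Hf(n+2)/\Hf(n)=(n+1)!\,n!$ and $\Hf_2(n+2)/\Hf_2(n)=n!$; once these are applied systematically, the result drops out in a few lines.
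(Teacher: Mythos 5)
Your proposal is correct and matches the paper's intent exactly: the paper states Claim \ref{claimQ2} as a direct consequence of formula (\ref{QARa}) in Lemma \ref{QAR}, which is precisely the term-by-term ratio computation you carry out, and your bookkeeping checks out — the pair $(1,2)$ is invariant, the moving odd pairs $(2,3),(3,4),(1,4)$ and even pairs $(1,3),(2,4)$ produce the stated factorial ratio (with $(3,4)$ contributing $(\s_4+\s_3-1)!(\s_4+\s_3)!$ in the denominator since only the sum, not the difference, shifts), and the $i=2$ hyperfactorial block correctly collapses to $(2\s_4-1)!(2\s_3)!$ via $\Hf_2(n+2)/\Hf_2(n)=n!$ and $\Hf(n+2)/\Hf(n)=(n+1)!\,n!$.
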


We now simplify the first fraction of the first term on the left-hand side of (\ref{recur1b}) as
\begin{align}
\frac{\phi(x,y-1,z)}{\phi(x,y-1,z-1)}&=\frac{\T(x+z+a+b+2,y+a-2,a)}{\T(x+z+a+b+1,y+a-2,a)}\frac{\T(z+1,y+b-2,b)}{\T(z,y+b-2,b)}\notag\\
&\times \frac{\T(z+a+b+1,y+a-2,a)}{\T(z+a+b+2,y+a-2,a)}\frac{\T(x+z,y+b-2,b)}{\T(x+z+1,y+b-2,b)}\notag\\
&\times \frac{\Q(a,b,x,y+z-1)}{\Q(a,b,x,y+z-2)} \frac{\Pn_{z+b,z+b,a}}{\Pn_{z+b-1,z+b-1,a}} \frac{\Pn_{y+z+b-2,y+z+b-2,a}}{\Pn_{y+z+b-1,y+z+b-1,a}}\\
&=\frac{(x+y+z+a+b)_a}{(x+z+a+b+1)_a}\frac{(y+z-1)_b}{(z)_b}\frac{(z+a+b+1)_a}{(y+z+a+b)_a}\frac{(x+z)_b}{(x+y+z-1)_b}\notag\\
&\times\frac{(x+y+z+a+b-1)(2x+2y+2z+2a+2b-3)!}{(2y+2z+2b-3)!}\notag\\
&\times\frac{(y+z-2)!(x+y+z+b-2)!(x+y+z+2a+2b-1)!}{(x+y+z-2)!(x+y+z+2a+b-1)!(2x+y+z+2a+2b-1)!}\notag\\
&\times \frac{a+b+z}{b+z}\frac{(2z+2a+2b-1)!}{(z+b+2a)!}\frac{(b+z)!}{(2z+2b-1)!}\notag\\
&\times \frac{y+z+b-1}{y+z+a+b-1}\frac{(y+z+2a+b-1)!}{(2y+2z+2a+2b-3)!}\frac{(2y+2z+2b-3)!}{(y+z+b-1)!}.
\end{align}
Working similarly for the second fraction, $\frac{\phi_{x,y,z-1}(a,b)}{\phi_{x,y,z}(a,b)}$, of the first term and multiplying the result by the above simplified form of the first fraction, we obtain
\begin{align}
&\frac{\phi_{x,y-1,z}(a,b)}{\phi_{x,y-1,z-1}(a,b)}\frac{\phi_{x,y,z-1}(a,b)}{\phi_{x,y,z}(a,b)}=\notag\\
&\frac{(x+y+z+a+b)}{(x+y+z+2a+b)}\frac{(y+z-1)}{(y+z+b-1)}\frac{(y+z+2a+b)}{(y+z+a+b)}\frac{(x+y+z+b-1)}{(x+y+z-1)}\notag\\
&\times\frac{(x+y+z+a+b-1)}{(x+y+z+a+b)}\frac{(2y+2z+2b-1)(2y+2z+2b-2)}{(2x+2y+2z+2a+2b-2)(2x+2y+2z+2a+2b-1)}\notag\\
&\times\frac{(x+y+z-1)(x+y+z+2a+b)(2x+y+z+2a+2b)}{(y+z-1)(x+y+z+b-1)(x+y+z+2a+2b)}\notag\\
&\times \frac{(y+z+b-1)(y+z+a+b)}{(y+z+b)(y+z+a+b-1)}\frac{(2y+2z+2a+2b-1)(2y+2z+2a+2b-2)(y+z+b)}{(y+z+2a+b)(2y+2z+2b-1)(2y+2z+2b-2)}\notag\\
&=\frac{(2x+y+z+2a+2b)(2y+2z+2a+2b-1)}{(2x+2y+2z+2a+2b-1)(x+y+z+2a+2b)}.\notag\\
\end{align}

 Next, we work on the second term on the left-hand side of (\ref{recur1b}). The first fraction can be written as
\begin{align}
&\frac{\phi_{x-1,y,z}(a,b)}{\phi_{x,y,z}(a,b)}=\frac{\T(x+b,y+a-1,a)}{\T(x+b+1,y+a-1,a)}\frac{\T(x+z+a+b+1,y+a-1,a)}{\T(x+z+a+b+2,y+a-1,a)}\notag\\
&\times \frac{\T(x+2a+b+2,y+b-1,b)}{\T(x+2a+b+1,y+b-1,b)}\frac{\T(x+z+1,y+b-1,b)}{\T(x+z,y+b-1,b)} \frac{\Q(a,b,x-1,y+z)}{\Q(a,b,x,y+z)}\\
&=\frac{(x+b)_a}{(x+y+b)_a}\frac{(x+z+a+b+1)_a}{(x+y+z+a+b+1)_a}\frac{(x+y+2a+b+1)_b}{(x+2a+b+1)_b}\frac{(x+y+z)_b}{(x+z)_b}\notag\\
&\times\frac{(x+a+b)}{(x+y+z+a+b)(2x+2y+2z+2a+2b-1)!(2x+2a+2b)!} \frac{(x+y+z-1)!(x+b-1)!}{(x+y+z+b-1)!(x-1)!}\notag\\
&\times\frac{(x+y+z+2a+b)!(x+2a+2b)!(2a+2b+2x+y+z-1)!(2a+2b+2x+y+z)!}{(x+y+z+2a+2b)!(x+2a+b)!}.
\end{align}
Doing similarly for the second fraction $\frac{\phi_{x-1,y,z}(a,b)}{\phi_{x,y,z}(a,b)}$ and multiplying by the above simplification of the first one, we get
\begin{align}
&\frac{\phi_{x+1,y-1,z-1}(a,b)}{\phi_{x,y-1,z-1}(a,b)}\frac{\phi_{x-1,y,z}(a,b)}{\phi_{x,y,z}(a,b)}=\notag\\
&\frac{(x+b)}{(x+a+b)}\frac{(x+y+z+a+b)}{(x+y+z+2a+b)}\frac{(x+2a+2b+1)}{(x+2a+b+1)}\frac{(x+y+z+b-1)}{(x+y+z-1)}\notag\\
&\times\frac{(x+a+b)(x+y+z+a+b-1)}{(x+a+b+1)(x+y+z+a+b)}\frac{(2x+2a+2b+1)(2x+2a+2b+2)}{(2x+2y+2z+2a+2b-2)(2x+2y+2z+2a+2b-1)}\notag\\
&\times\frac{(x+y+z-1)(x)}{(x+b)(x+y+z+b-1)}\frac{(x+y+z+2a+b)(x+2a+b+1)}{(x+2a+2b+1)(x+y+z+2a+2b)}\notag\\
&=
\frac{x(2x+2a+2b+1)}{(2x+2y+2z+2a+2b-1)(x+y+z+2a+2b)}.
\end{align}
We have now
\begin{align}
\frac{\phi_{x,y-1,z}(a,b)}{\phi_{x,y-1,z-1}(a,b)}\frac{\phi_{x,y,z-1}(a,b)}{\phi_{x,y,z}(a,b)}&+\frac{\phi_{x+1,y-1,z-1}(a,b)}{\phi_{x,y-1,z-1}(a,b)}\frac{\phi_{x-1,y,z}(a,b)}{\phi_{x,y,z}(a,b)}\notag\\
&=\frac{(2x+y+z+2a+2b)(2y+2z+2a+2b-1)}{(2x+2y+2z+2a+2b-1)(x+y+z+2a+2b)}\notag\\
&+\frac{x(2x+2a+2b+1)}{(2x+2y+2z+2a+2b-1)(x+y+z+2a+2b)}=1.
\end{align}
 This finishes the proof of (\ref{main1eq1}).

\bigskip

We now prove (\ref{main1eq2}) by induction on $x+y+z$. The three base cases here are still the cases of $x=0$, $y=0$, and $z=0$.

Similar to the case when $k=1$ above, when $x=0$, we can partition our region into two $\mathcal{Q}$-type regions and several forced lozenges (shown in Figure \ref{Halfhex2basex}(b)). By Region-splitting Lemma \ref{RS}, we have
\begin{equation}
\M(H^{(1)}_{0,y,z}(a_1,\dots,a_{2k}))=\M(\mathcal{Q}(0,a_1,a_2,\dotsc,a_{2k},y))\M(\mathcal{Q}(a_1,a_2,\dots,a_{2k}+z)).
\end{equation}
Our tiling formula (\ref{main1eq2}) follows from (\ref{main1eq1}) and Lemma \ref{QAR}.

If $y=0$, we partition our regions into two $\mathcal{Q}$-type regions (and several forced lozenges) along the line containing the bases of the triangular holes (see Figure \ref{Halfhex2basey}(b)). By Region-splitting Lemma, we can also write the number of tilings of our region here as the product of these $\mathcal{Q}$-type regions as follows:
\begin{equation}
\M(H^{(1)}_{x,0,z}(a_1,\dots,a_{2k}))=\M(\mathcal{Q}(0,a_1,a_2,\dotsc,a_{2k-1}))\M(\mathcal{Q}(a_1,a_2,\dots,a_{2k},x,z)).
\end{equation}
Then  (\ref{main1eq2}) follows again from (\ref{main1eq1}) and Lemma \ref{QAR}.

The last base case is the case when $z=0$. Similar to the cases of $x=0$ and $y=0$, we can break our region into two $\mathcal{Q}$-type regions as in Figure \ref{Halfhex2basez}(b). Region-splitting Lemma gives us the following formula:
\begin{equation}
\M(H^{(1)}_{x,0,z}(a_1,\dots,a_{2k}))=\M(\mathcal{Q}(0,a_1,a_2,\dotsc,a_{2k-1},a_{2k}+x,y))\M(\mathcal{Q}(a_1,a_2,\dots,a_{2k})).
\end{equation}
Then (\ref{main1eq2}) was implied by (\ref{main1eq1}) and Lemma \ref{QAR}.

For the induction step, we assume that $x,y,z$ are all positive and that (\ref{main1eq2}) holes for any $H^{(1)}$-type region with at least $3$ holes whose sum of the $x$-, $y$- and $z$-parameters is strictly less than $x+y+z$. We need to show that (\ref{main1eq2}) holds for the general region $H^{(1)}_{x,y,z}(\textbf{a})$, where $\textbf{a}=(a_1,a_2,\dotsc,a_n)$ is a sequence of positive integers with $n\geq 3$.

As proved by using Kuo condensation above,  the tiling number of the region $H^{(1)}_{x,y,z}(\textbf{a})$ satisfies the recurrence  (\ref{recur1}). Therefore, our work is now verifying that the formula in (\ref{main1eq2}) satisfies the same recurrence.

Denote by $\Phi_{x,y,z}(\textbf{a})$ the formula on the right-hand side of (\ref{main1eq2}). We now need to verify that
\begin{align}\label{recur1b}
\frac{\Phi_{x,y-1,z}(\textbf{a})}{\Phi_{x,y-1,z-1}(\textbf{a})}\frac{\Phi_{x,y,z-1}(\textbf{a})}{\Phi_{x,y,z}(\textbf{a})}+\frac{\Phi_{x+1,y-1,z-1}(\textbf{a})}{\Phi_{x,y-1,z-1}(\textbf{a})}\frac{\Phi_{x-1,y,z}(\textbf{a})}{\phi_{x,y,z}(\textbf{a})}=1.
\end{align}
We write $\Phi_{x,y,z}(\textbf{a})=\phi_{x,y,z}(\Od(\textbf{a}),\E(\textbf{a})) \cdot f_{x,y,z}(\textbf{a})$, where $\phi_{x,y,z}(a,b)$ was defined as in the verification of  (\ref{main1eq1}). We need to show that:
\begin{align}\label{recur1c}
&\frac{f_{x,y-1,z}(\textbf{a})f_{x,y,z-1}(\textbf{a})}{f_{x,y-1,z-1}(\textbf{a})f_{x,y,z}(\textbf{a})}\frac{\phi_{x,y-1,z}(\Od(\textbf{a}),\E(\textbf{a}))}{\phi_{x,y-1,z-1}(\Od(\textbf{a}),\E(\textbf{a}))}\frac{\phi_{x,y,z-1}(\Od(\textbf{a}),\E(\textbf{a}))}{\phi_{x,y,z}(\Od(\textbf{a}),\E(\textbf{a}))}\notag\\
&+\frac{f_{x+1,y-1,z-1}(\textbf{a})f_{x-1,y,z}(\textbf{a})}{f_{x,y-1,z-1}(\textbf{a})f_{x,y,z}(\textbf{a})}\frac{\phi_{x+1,y-1,z-1}(\Od(\textbf{a}),\E(\textbf{a}))}{\phi_{x,y-1,z-1}(\Od(\textbf{a}),\E(\textbf{a}))}\frac{\phi_{x-1,y,z}(\Od(\textbf{a}),\E(\textbf{a}))}{\phi_{x,y,z}(\Od(\textbf{a}),\E(\textbf{a}))}=1.
\end{align}

 For given sequence $\textbf{a}=(a_1,a_2,\dotsc,a_n)$, the function $f_{x,y,z}(\textbf{a})$ only depends on the parameters $y$ and $z$ and the sums $x+y$ and $x+z$.  The values of these quadruples for the four occurrences of $f$ in the first fraction in the first terms are:
 
\begin{align}
&(y-1,z,x+y-1,x+z) \text{,  }\, \, \,  \, \,\,\,\,\,\,\,\,\,\,\,\, \,\,\,\,\,\,\,\,\,\,\,\,\,\,\, (y,z-1,z+y,x+z-1) \notag\\
&(y-1,z-1,x+y-1,x+z-1) \text{,  } \,\,\,\,\,\,\,\,\,\, (y,z,x+y,x+z),
\end{align}

where the two quadruples on the top row correspond to the two $f$-functions on the numerator, and the two quadruples on the bottom row correspond to the two $f$-functions on the denominator of our fraction. In particular, the values these parameters in each row form the same set. This means that the product of two $f$-functions on numerator is equal to the product of the ones on the denominator. It means that 
\begin{equation}
\frac{f_{x,y-1,z}(\textbf{a})f_{x,y,z-1}(\textbf{a})}{f_{x,y-1,z-1}(\textbf{a})f_{x,y,z}(\textbf{a})}=1.
\end{equation}
Similarly, we have 
\begin{equation}
\frac{f_{x+1,y-1,z-1}(\textbf{a})f_{x-1,y,z}(\textbf{a})}{f_{x,y-1,z-1}(\textbf{a})f_{x,y,z}(\textbf{a})}=1,
\end{equation}
and (\ref{recur1c}) becomes 
\begin{align}\label{recur1d}
&\frac{\phi_{x,y-1,z}(\Od(\textbf{a}),\E(\textbf{a}))}{\phi_{x,y-1,z-1}(\Od(\textbf{a}),\E(\textbf{a}))}\frac{\phi_{x,y,z-1}(\Od(\textbf{a}),\E(\textbf{a}))}{\phi_{x,y,z}(\Od(\textbf{a}),\E(\textbf{a}))}+\frac{\phi_{x+1,y-1,z-1}(\Od(\textbf{a}),\E(\textbf{a}))}{\phi_{x,y-1,z-1}(\Od(\textbf{a}),\E(\textbf{a}))}\frac{\phi_{x-1,y,z}(\Od(\textbf{a}),\E(\textbf{a}))}{\phi_{x,y,z}(\Od(\textbf{a}),\E(\textbf{a}))}=1.
\end{align}
However, this is exactly a consequence of (\ref{main1eq1}), then (\ref{main1eq2}) follows.

Finally, as mentioned before, any $H^{(1)}$-type region with an odd number of holes can be viewed as a special one with an even number of holes, when the right most hole has size $0$. This means (\ref{main1eqx}) follows.
\end{proof}

The proofs of Theorems \ref{main2}--\ref{main4}  are essentially the same as that of Theorem \ref{main1}, and will be omitted.

\bigskip

We devote the last part of this section to the proof of Theorem \ref{main5}.

\begin{proof}[Proof of Theorem \ref{main5}]
This proof follows the lines in the proof of Theorem \ref{main1}.

We first prove (\ref{main5eq1}) by induction on $x+y+z$. The base cases are still: $x=0$, $y=0$, and $z=0$.

Similar to the case of $H^{(1)}$-type regions, if $x=0$, we can split our regions into two disjoint subregions (and several forced lozenges as in Figure \ref{Halfhex2basex2}). By Region-splitting Lemms \ref{RS}, we get
\begin{equation}
\M(H^{(5 )}_{0,y,z}(a_1,\dots,a_{2k}))=2^{a_1}\M(\mathcal{K}'(0,a_1+1,a_2,\dotsc,a_{2k},y))\M(\mathcal{Q}(a_1,a_2,\dots,a_{2k}+z)).
\end{equation}
Then  (\ref{main5eq1}) follows from the above identity (when $k=1$) and Lemma \ref{QAR}.

If $y=0$, Figure \ref{Halfhex2basey2} and Region-splitting Lemma \ref{RS} tell us
\begin{equation}
\M(H^{(5 )}_{x,0,z}(a_1,\dots,a_{2k}))=2^{a_1}\M(\mathcal{K}'(0,a_1+1,a_2,\dotsc,a_{2k-1}))\M(\mathcal{Q}(a_1,a_2,\dots,a_{2k},x,z)).
\end{equation}
Finally, when $z=0$,  we have as in Figure \ref{Halfhex2basez2} 
\begin{equation}
\M(H^{(5 )}_{x,y,0}(a_1,\dots,a_{2k}))=2^{a_1}\M(\mathcal{K}'(0,a_1+1,a_2,\dotsc,a_{2k-1},a_{2k}+x,y))\M(\mathcal{Q}(a_1,a_2,\dots,a_{2k})).
\end{equation}
Then (\ref{main5eq1}) follows again from  Lemma \ref{QAR}.

\begin{figure}
  \centering
  \setlength{\unitlength}{3947sp}%
\begingroup\makeatletter\ifx\SetFigFont\undefined%
\gdef\SetFigFont#1#2#3#4#5{%
  \reset@font\fontsize{#1}{#2pt}%
  \fontfamily{#3}\fontseries{#4}\fontshape{#5}%
  \selectfont}%
\fi\endgroup%
\resizebox{10cm}{!}{
 \begin{picture}(0,0)%
\includegraphics{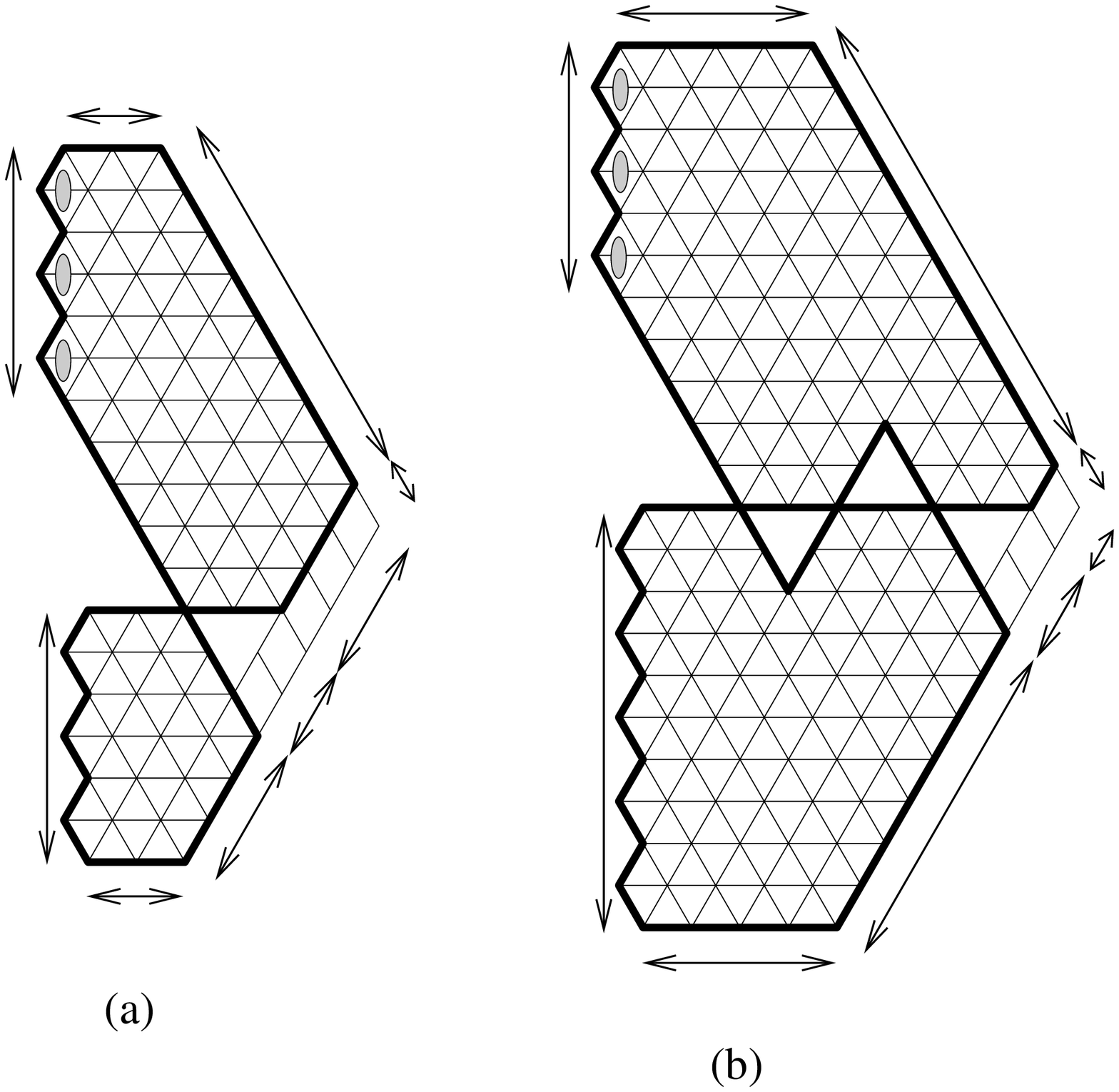}%
\end{picture}%
%
%

\begin{picture}(9532,9124)(1254,-8743)
\put(4156,-6106){\rotatebox{60.0}{\makebox(0,0)[lb]{\smash{{\SetFigFont{17}{20.4}{\rmdefault}{\mddefault}{\itdefault}{$a_2$}%
}}}}}
\put(10421,-3568){\rotatebox{300.0}{\makebox(0,0)[lb]{\smash{{\SetFigFont{17}{20.4}{\rmdefault}{\mddefault}{\itdefault}{$z$}%
}}}}}
\put(8732,-778){\rotatebox{300.0}{\makebox(0,0)[lb]{\smash{{\SetFigFont{17}{20.4}{\rmdefault}{\mddefault}{\itdefault}{$y+2a_1+2a_3+1$}%
}}}}}
\put(6191,-6538){\rotatebox{90.0}{\makebox(0,0)[lb]{\smash{{\SetFigFont{17}{20.4}{\rmdefault}{\mddefault}{\itdefault}{$z+a_2+a_4$}%
}}}}}
\put(5936,-1843){\rotatebox{90.0}{\makebox(0,0)[lb]{\smash{{\SetFigFont{17}{20.4}{\rmdefault}{\mddefault}{\itdefault}{$y+a_3$}%
}}}}}
\put(2311,-4726){\makebox(0,0)[lb]{\smash{{\SetFigFont{17}{20.4}{\rmdefault}{\mddefault}{\itdefault}{$a_1$}%
}}}}
\put(6821,-3883){\makebox(0,0)[lb]{\smash{{\SetFigFont{17}{20.4}{\rmdefault}{\mddefault}{\itdefault}{$a_1$}%
}}}}
\put(3241,-5116){\makebox(0,0)[lb]{\smash{{\SetFigFont{17}{20.4}{\rmdefault}{\mddefault}{\itdefault}{$a_2$}%
}}}}
\put(7676,-4288){\makebox(0,0)[lb]{\smash{{\SetFigFont{17}{20.4}{\rmdefault}{\mddefault}{\itdefault}{$a_2$}%
}}}}
\put(8411,-3943){\makebox(0,0)[lb]{\smash{{\SetFigFont{17}{20.4}{\rmdefault}{\mddefault}{\itdefault}{$a_3$}%
}}}}
\put(9266,-4288){\makebox(0,0)[lb]{\smash{{\SetFigFont{17}{20.4}{\rmdefault}{\mddefault}{\itdefault}{$a_4$}%
}}}}
\put(2161,-706){\makebox(0,0)[lb]{\smash{{\SetFigFont{17}{20.4}{\rmdefault}{\mddefault}{\itdefault}{$a_2$}%
}}}}
\put(2266,-7561){\makebox(0,0)[lb]{\smash{{\SetFigFont{17}{20.4}{\rmdefault}{\mddefault}{\itdefault}{$a_1$}%
}}}}
\put(1771,-6496){\rotatebox{90.0}{\makebox(0,0)[lb]{\smash{{\SetFigFont{17}{20.4}{\rmdefault}{\mddefault}{\itdefault}{$z+a_2$}%
}}}}}
\put(3586,-7081){\rotatebox{60.0}{\makebox(0,0)[lb]{\smash{{\SetFigFont{17}{20.4}{\rmdefault}{\mddefault}{\itdefault}{$z+a_2$}%
}}}}}
\put(4696,-5191){\rotatebox{60.0}{\makebox(0,0)[lb]{\smash{{\SetFigFont{17}{20.4}{\rmdefault}{\mddefault}{\itdefault}{$y$}%
}}}}}
\put(1501,-2356){\rotatebox{90.0}{\makebox(0,0)[lb]{\smash{{\SetFigFont{17}{20.4}{\rmdefault}{\mddefault}{\itdefault}{$y$}%
}}}}}
\put(4816,-3661){\rotatebox{300.0}{\makebox(0,0)[lb]{\smash{{\SetFigFont{17}{20.4}{\rmdefault}{\mddefault}{\itdefault}{$z$}%
}}}}}
\put(3586,-1486){\rotatebox{300.0}{\makebox(0,0)[lb]{\smash{{\SetFigFont{17}{20.4}{\rmdefault}{\mddefault}{\itdefault}{$y+2a_1+1$}%
}}}}}
\put(6911, 77){\makebox(0,0)[lb]{\smash{{\SetFigFont{17}{20.4}{\rmdefault}{\mddefault}{\itdefault}{$a_2+a_4$}%
}}}}
\put(6956,-8053){\makebox(0,0)[lb]{\smash{{\SetFigFont{17}{20.4}{\rmdefault}{\mddefault}{\itdefault}{$a_1+a_3$}%
}}}}
\put(8891,-7423){\rotatebox{60.0}{\makebox(0,0)[lb]{\smash{{\SetFigFont{17}{20.4}{\rmdefault}{\mddefault}{\itdefault}{$z+2a_2+a_4$}%
}}}}}
\put(10121,-5368){\rotatebox{60.0}{\makebox(0,0)[lb]{\smash{{\SetFigFont{17}{20.4}{\rmdefault}{\mddefault}{\itdefault}{$a_4$}%
}}}}}
\put(10556,-4678){\rotatebox{60.0}{\makebox(0,0)[lb]{\smash{{\SetFigFont{17}{20.4}{\rmdefault}{\mddefault}{\itdefault}{$y$}%
}}}}}
\end{picture}}
  \caption{Spitting up a $H^{(5)}$-type region in the case of $x=0$.}\label{Halfhex2basex2}
\end{figure}

\begin{figure}
  \centering
  \setlength{\unitlength}{3947sp}%
\begingroup\makeatletter\ifx\SetFigFont\undefined%
\gdef\SetFigFont#1#2#3#4#5{%
  \reset@font\fontsize{#1}{#2pt}%
  \fontfamily{#3}\fontseries{#4}\fontshape{#5}%
  \selectfont}%
\fi\endgroup%
\resizebox{10cm}{!}{
\begin{picture}(0,0)%
\includegraphics{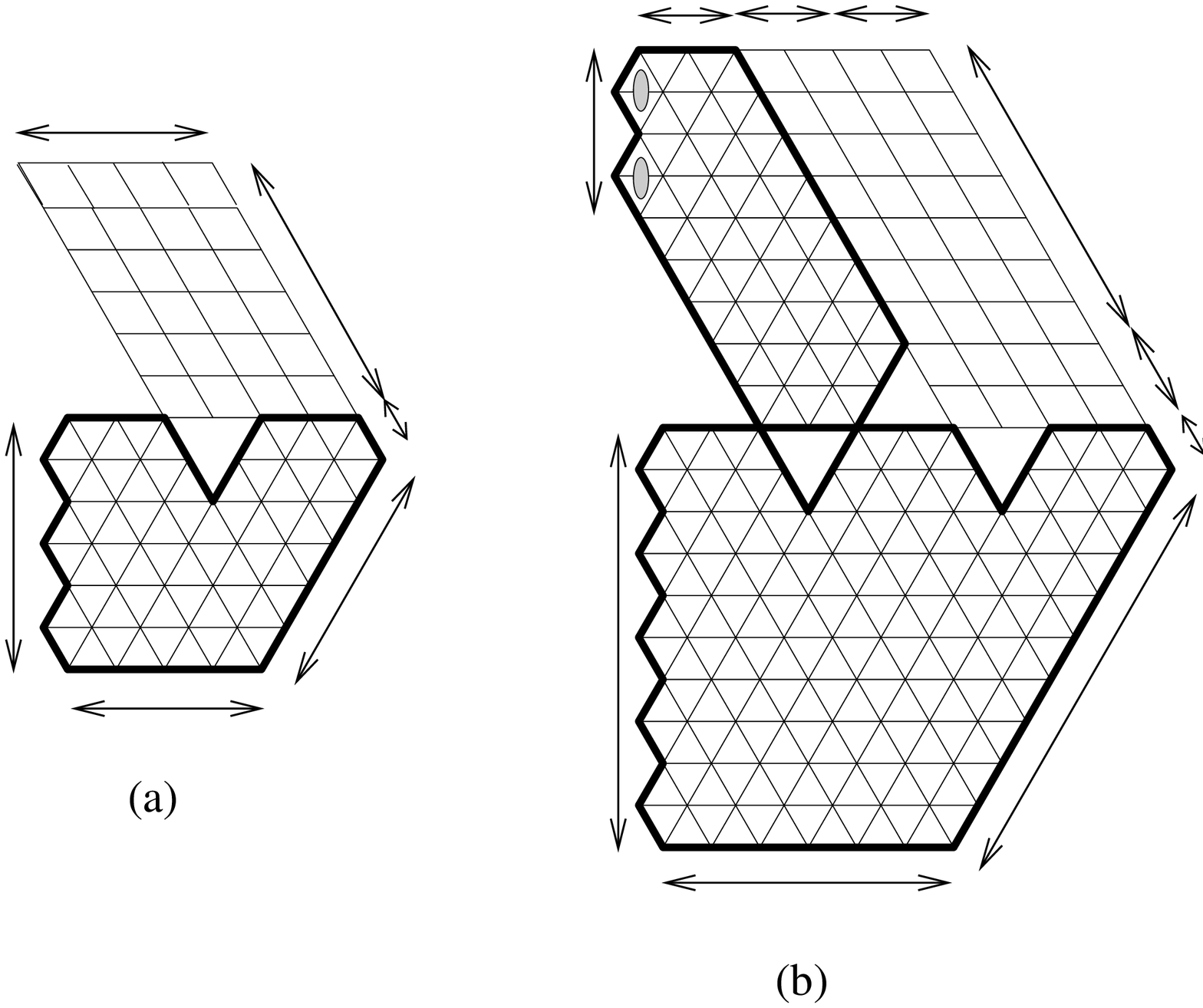}%
\end{picture}%
%
%

\begin{picture}(10633,8524)(848,-8043)
\put(4649,-3183){\rotatebox{300.0}{\makebox(0,0)[lb]{\smash{{\SetFigFont{17}{20.4}{\rmdefault}{\mddefault}{\itdefault}{$z$}%
}}}}}
\put(7111,-7308){\makebox(0,0)[lb]{\smash{{\SetFigFont{17}{20.4}{\rmdefault}{\mddefault}{\itdefault}{$x+a_1+a_3$}%
}}}}
\put(9931,-6333){\rotatebox{60.0}{\makebox(0,0)[lb]{\smash{{\SetFigFont{17}{20.4}{\rmdefault}{\mddefault}{\itdefault}{$z+2a_2+2a_4$}%
}}}}}
\put(3763,-1544){\rotatebox{300.0}{\makebox(0,0)[lb]{\smash{{\SetFigFont{17}{20.4}{\rmdefault}{\mddefault}{\itdefault}{$2a_1+1$}%
}}}}}
\put(4209,-5276){\rotatebox{60.0}{\makebox(0,0)[lb]{\smash{{\SetFigFont{17}{20.4}{\rmdefault}{\mddefault}{\itdefault}{$z+2a_2$}%
}}}}}
\put(2098,-3134){\makebox(0,0)[lb]{\smash{{\SetFigFont{17}{20.4}{\rmdefault}{\mddefault}{\itdefault}{$a_1$}%
}}}}
\put(6796,-3228){\makebox(0,0)[lb]{\smash{{\SetFigFont{17}{20.4}{\rmdefault}{\mddefault}{\itdefault}{$a_1$}%
}}}}
\put(2938,-3524){\makebox(0,0)[lb]{\smash{{\SetFigFont{17}{20.4}{\rmdefault}{\mddefault}{\itdefault}{$a_2$}%
}}}}
\put(7726,-3618){\makebox(0,0)[lb]{\smash{{\SetFigFont{17}{20.4}{\rmdefault}{\mddefault}{\itdefault}{$a_2$}%
}}}}
\put(8461,-3258){\makebox(0,0)[lb]{\smash{{\SetFigFont{17}{20.4}{\rmdefault}{\mddefault}{\itdefault}{$a_3$}%
}}}}
\put(9301,-3603){\makebox(0,0)[lb]{\smash{{\SetFigFont{17}{20.4}{\rmdefault}{\mddefault}{\itdefault}{$a_4$}%
}}}}
\put(1888,-854){\makebox(0,0)[lb]{\smash{{\SetFigFont{17}{20.4}{\rmdefault}{\mddefault}{\itdefault}{$x+a_2$}%
}}}}
\put(6736,147){\makebox(0,0)[lb]{\smash{{\SetFigFont{17}{20.4}{\rmdefault}{\mddefault}{\itdefault}{$a_2$}%
}}}}
\put(7516,147){\makebox(0,0)[lb]{\smash{{\SetFigFont{17}{20.4}{\rmdefault}{\mddefault}{\itdefault}{$a_4$}%
}}}}
\put(8356,177){\makebox(0,0)[lb]{\smash{{\SetFigFont{17}{20.4}{\rmdefault}{\mddefault}{\itdefault}{$x$}%
}}}}
\put(5986,-1218){\rotatebox{90.0}{\makebox(0,0)[lb]{\smash{{\SetFigFont{17}{20.4}{\rmdefault}{\mddefault}{\itdefault}{$a_3$}%
}}}}}
\put(9631,-723){\rotatebox{300.0}{\makebox(0,0)[lb]{\smash{{\SetFigFont{17}{20.4}{\rmdefault}{\mddefault}{\itdefault}{$2a_1+a_3+1$}%
}}}}}
\put(10756,-2658){\rotatebox{300.0}{\makebox(0,0)[lb]{\smash{{\SetFigFont{17}{20.4}{\rmdefault}{\mddefault}{\itdefault}{$a_3$}%
}}}}}
\put(11116,-3318){\rotatebox{300.0}{\makebox(0,0)[lb]{\smash{{\SetFigFont{17}{20.4}{\rmdefault}{\mddefault}{\itdefault}{$z$}%
}}}}}
\put(6196,-6003){\rotatebox{90.0}{\makebox(0,0)[lb]{\smash{{\SetFigFont{17}{20.4}{\rmdefault}{\mddefault}{\itdefault}{$z+a_2+a_4$}%
}}}}}
\put(1153,-4559){\rotatebox{90.0}{\makebox(0,0)[lb]{\smash{{\SetFigFont{17}{20.4}{\rmdefault}{\mddefault}{\itdefault}{$z+a_2$}%
}}}}}
\put(2323,-5939){\makebox(0,0)[lb]{\smash{{\SetFigFont{17}{20.4}{\rmdefault}{\mddefault}{\itdefault}{$x+a_1$}%
}}}}
\end{picture}}
  \caption{The base case $y=0$ for the $H^{(5)}$-type regions.}\label{Halfhex2basey2}
\end{figure}

\begin{figure}
  \centering
  \setlength{\unitlength}{3947sp}%
\begingroup\makeatletter\ifx\SetFigFont\undefined%
\gdef\SetFigFont#1#2#3#4#5{%
  \reset@font\fontsize{#1}{#2pt}%
  \fontfamily{#3}\fontseries{#4}\fontshape{#5}%
  \selectfont}%
\fi\endgroup%
\resizebox{10cm}{!}{
\begin{picture}(0,0)%
\includegraphics{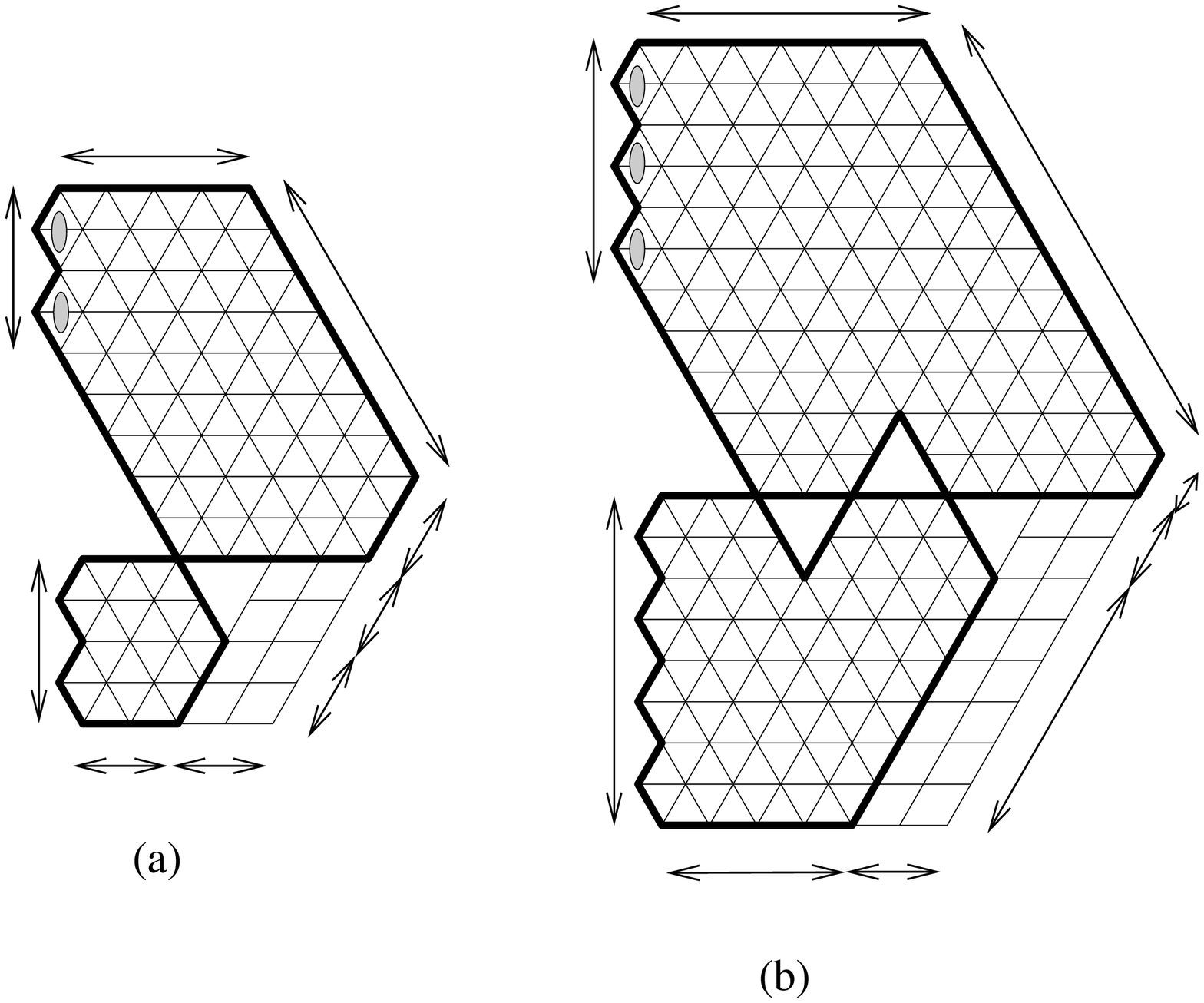}%
\end{picture}%
%
%

\begin{picture}(10399,8559)(1146,-8139)
\put(2278,-6501){\makebox(0,0)[lb]{\smash{{\SetFigFont{17}{20.4}{\rmdefault}{\mddefault}{\itdefault}{$a_1$}%
}}}}
\put(1633,-5391){\rotatebox{90.0}{\makebox(0,0)[lb]{\smash{{\SetFigFont{17}{20.4}{\rmdefault}{\mddefault}{\itdefault}{$a_2$}%
}}}}}
\put(4303,-5961){\rotatebox{60.0}{\makebox(0,0)[lb]{\smash{{\SetFigFont{17}{20.4}{\rmdefault}{\mddefault}{\itdefault}{$a_2$}%
}}}}}
\put(4663,-5316){\rotatebox{60.0}{\makebox(0,0)[lb]{\smash{{\SetFigFont{17}{20.4}{\rmdefault}{\mddefault}{\itdefault}{$a_2$}%
}}}}}
\put(2248,-4341){\makebox(0,0)[lb]{\smash{{\SetFigFont{17}{20.4}{\rmdefault}{\mddefault}{\itdefault}{$a_1$}%
}}}}
\put(7003,-3814){\makebox(0,0)[lb]{\smash{{\SetFigFont{17}{20.4}{\rmdefault}{\mddefault}{\itdefault}{$a_1$}%
}}}}
\put(3118,-4716){\makebox(0,0)[lb]{\smash{{\SetFigFont{17}{20.4}{\rmdefault}{\mddefault}{\itdefault}{$a_2$}%
}}}}
\put(7873,-4219){\makebox(0,0)[lb]{\smash{{\SetFigFont{17}{20.4}{\rmdefault}{\mddefault}{\itdefault}{$a_2$}%
}}}}
\put(8608,-3859){\makebox(0,0)[lb]{\smash{{\SetFigFont{17}{20.4}{\rmdefault}{\mddefault}{\itdefault}{$a_3$}%
}}}}
\put(9478,-4249){\makebox(0,0)[lb]{\smash{{\SetFigFont{17}{20.4}{\rmdefault}{\mddefault}{\itdefault}{$a_4$}%
}}}}
\put(1393,-2301){\rotatebox{90.0}{\makebox(0,0)[lb]{\smash{{\SetFigFont{17}{20.4}{\rmdefault}{\mddefault}{\itdefault}{$y$}%
}}}}}
\put(6088,-1714){\rotatebox{90.0}{\makebox(0,0)[lb]{\smash{{\SetFigFont{17}{20.4}{\rmdefault}{\mddefault}{\itdefault}{$y+a_3$}%
}}}}}
\put(2383,-1071){\makebox(0,0)[lb]{\smash{{\SetFigFont{17}{20.4}{\rmdefault}{\mddefault}{\itdefault}{$x+a_2$}%
}}}}
\put(7228,116){\makebox(0,0)[lb]{\smash{{\SetFigFont{17}{20.4}{\rmdefault}{\mddefault}{\itdefault}{$x+a_2+a_4$}%
}}}}
\put(7153,-7419){\makebox(0,0)[lb]{\smash{{\SetFigFont{17}{20.4}{\rmdefault}{\mddefault}{\itdefault}{$a_1+a_3$}%
}}}}
\put(8578,-7419){\makebox(0,0)[lb]{\smash{{\SetFigFont{17}{20.4}{\rmdefault}{\mddefault}{\itdefault}{$x$}%
}}}}
\put(5170,-4546){\rotatebox{60.0}{\makebox(0,0)[lb]{\smash{{\SetFigFont{17}{20.4}{\rmdefault}{\mddefault}{\itdefault}{$y$}%
}}}}}
\put(11338,-4179){\rotatebox{60.0}{\makebox(0,0)[lb]{\smash{{\SetFigFont{17}{20.4}{\rmdefault}{\mddefault}{\itdefault}{$y$}%
}}}}}
\put(6358,-5779){\rotatebox{90.0}{\makebox(0,0)[lb]{\smash{{\SetFigFont{17}{20.4}{\rmdefault}{\mddefault}{\itdefault}{$a_2+a_4$}%
}}}}}
\put(10003,-6534){\rotatebox{60.0}{\makebox(0,0)[lb]{\smash{{\SetFigFont{17}{20.4}{\rmdefault}{\mddefault}{\itdefault}{$2a_2+a_4$}%
}}}}}
\put(11008,-4824){\rotatebox{60.0}{\makebox(0,0)[lb]{\smash{{\SetFigFont{17}{20.4}{\rmdefault}{\mddefault}{\itdefault}{$a_4$}%
}}}}}
\put(10123,-1114){\rotatebox{300.0}{\makebox(0,0)[lb]{\smash{{\SetFigFont{17}{20.4}{\rmdefault}{\mddefault}{\itdefault}{$y+2a_1+a_3+1$}%
}}}}}
\put(4213,-1911){\rotatebox{300.0}{\makebox(0,0)[lb]{\smash{{\SetFigFont{17}{20.4}{\rmdefault}{\mddefault}{\itdefault}{$y+2a_1+1$}%
}}}}}
\put(3043,-6501){\makebox(0,0)[lb]{\smash{{\SetFigFont{17}{20.4}{\rmdefault}{\mddefault}{\itdefault}{$x$}%
}}}}
\end{picture}}
  \caption{Applying the Region-splitting Lemma \ref{RS} to a $H^{(5)}$-type region in the case of $z=0$.}\label{Halfhex2basez2}
\end{figure}

For the induction step, we assume that $x,y,z\geq 1$ and that (\ref{main5eq1}) holds for any $H^{(5)}$-type region with two holes whose sum of the $x$-, $y$-, and $z$-parameters is strictly less than $x+y+z$. We need to show   (\ref{main5eq1})  for any region $H^{(5)}_{x,y,z}(a,b)$.

We apply Kuo condensation to the dual graph $G$ of the region $H^{(5)}_{x,y,z}(\textbf{a})$ for a general sequence of positive integers $\textbf{a}=(a_1,a_2,\dotsc,a_n)$, based on Figure \ref{Kuomidhole3}. The figure tells us that the product of the tiling numbers of the two region in the top row equals the product of the tiling numbers of the two regions in the middle row plus the product of the tiling numbers of the two regions in the bottom row. In particular, we have
\begin{align}\label{recur2}
\M(H^{(5)}_{x,y,z}(\textbf{a}))\M(H^{(5)}_{x,y-1,z-1}(\textbf{a}))=\M(H^{(5)}_{x,y-1,z}(\textbf{a}))\M(H^{(5)}_{x,y,z-1}(\textbf{a}))+\M(H^{(5)}_{x+1,y-1,z-1}(\textbf{a}))\M(H^{(5)}_{x-1,y,z}(\textbf{a})),
\end{align}
for any sequence $\textbf{a}=(a_1,a_2,\dotsc,a_k)$.
To verify    (\ref{main5eq1}) we only need to show that the expression on the right-hand side of its satisfies the same recurrence.

We denote by $\psi_{x,y,z}(a,b)$ the product on the right-hand side of  (\ref{main5eq1}), we will show that
\begin{align}\label{recur2b}
\psi_{x,y,z}(a,b)\psi_{x,y-1,z-1}(a,b)=\psi_{x,y-1,z}(a,b)\psi_{x,y,z-1}(a,b)+\psi_{x+1,y-1,z-1}(a,b)\psi_{x-1,y,z}(a,b),
\end{align}
or
\begin{align}\label{recur2c}
\frac{\psi_{x,y-1,z}(a,b)}{\psi_{x,y-1,z-1}(a,b)}\frac{\psi_{x,y,z-1}(a,b)}{\psi_{x,y,z}(a,b)}+\frac{\psi_{x+1,y-1,z-1}(a,b)}{\psi_{x,y-1,z-1}(a,b)}\frac{\psi_{x-1,y,z}(a,b)}{\psi_{x,y,z}(a,b)}=1.
\end{align}
We have several additional claims (that follow directly from the definition of the function $\T$ and $\V$) as follows:
\begin{claim}\label{claimV}
\begin{equation}\frac{\V(x,n,m)}{\V(x,n-1,m))}=[x+2n-2m]_{m}.\end{equation}
\end{claim}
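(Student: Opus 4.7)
The plan is to verify this identity by unwinding the definitions directly, since the claim is an elementary product identity rather than a structural tiling statement. First I would expand both $\V(x,n,m)$ and $\V(x,n-1,m)$ using the definition $\V(x,n,m)=\prod_{i=0}^{m-1}[x+2i]_{n-2i}$, and pair up the factors sharing the same index $i$ to write
$$\frac{\V(x,n,m)}{\V(x,n-1,m)}=\prod_{i=0}^{m-1}\frac{[x+2i]_{n-2i}}{[x+2i]_{n-2i-1}}.$$

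Next, I would use the fact that for each index the ratio $[y]_k/[y]_{k-1}$ collapses to the single trailing factor $y+2(k-1)$ of the longer skipping Pochhammer symbol. Applied with $y=x+2i$ and $k=n-2i$, this yields $(x+2i)+2(n-2i-1)=x+2n-2i-2$. Taking the product over $i=0,1,\dots,m-1$ and reindexing by $j=m-1-i$ rearranges these $m$ factors into increasing order $x+2n-2m,\,x+2n-2m+2,\dots,x+2n-4$, which is exactly $[x+2n-2m]_m$ by definition.

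The only subtlety to check, rather than a genuine obstacle, is the behavior of the piecewise definition of $[\cdot]_k$ when $n-2i$ is zero or negative. A direct case check confirms that the one-factor cancellation $[y]_k/[y]_{k-1}=y+2(k-1)$ holds uniformly in $k$ (for instance, when $k=0$ one computes $[y]_0/[y]_{-1}=1/\bigl(1/(y-2)\bigr)=y-2$, which matches the formula), so the calculation above is valid in every regime in which $\V$ is defined and the claim follows.
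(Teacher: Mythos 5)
Your proposal is correct and matches the paper's approach: the paper states Claim \ref{claimV} as one of several identities that ``follow directly from the definition of the function $\T$ and $\V$,'' and your factor-by-factor cancellation $[y]_k/[y]_{k-1}=y+2(k-1)$ followed by the reindexing $j=m-1-i$ is exactly that direct verification, written out in full. Your case check of the piecewise definition of $[\cdot]_k$ (including $k\leq 0$) is a careful addition the paper leaves implicit, but it is the same argument.
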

\begin{claim}\label{claimV2}
\begin{equation}\frac{\V(x,n,m)}{\V(x-2,n,m))}=\frac{[x+2n-2m]_{m}}{[x-2]_m}.\end{equation}
\end{claim}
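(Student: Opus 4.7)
The plan is to prove Claim \ref{claimV2} by direct expansion, using only the definitions of $\V(x,n,m)$ and of the skipping Pochhammer $[x]_n$, and then observing that the resulting per-factor quotient telescopes cleanly.

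First I would apply the definition $\V(x,n,m)=\prod_{i=0}^{m-1}[x+2i]_{n-2i}$ to both numerator and denominator, rewriting the ratio as
\begin{equation*}
\frac{\V(x,n,m)}{\V(x-2,n,m)}=\prod_{i=0}^{m-1}\frac{[x+2i]_{n-2i}}{[x-2+2i]_{n-2i}}.
\end{equation*}
For each fixed $i$, the numerator is $[x+2i]_{n-2i}=(x+2i)(x+2i+2)\cdots(x+2n-2i-2)$ and the denominator is $[x-2+2i]_{n-2i}=(x-2+2i)(x+2i)(x+2i+2)\cdots(x+2n-2i-4)$. These two products share every interior factor, so the per-factor ratio collapses to
\begin{equation*}
\frac{[x+2i]_{n-2i}}{[x-2+2i]_{n-2i}}=\frac{x+2n-2i-2}{x+2i-2}.
\end{equation*}

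Next I would take the product of these simplified factors over $i=0,1,\ldots,m-1$. The numerators combine to $(x+2n-2)(x+2n-4)\cdots(x+2n-2m)$; rewritten in increasing order of the factors, this is exactly $[x+2n-2m]_m$. The denominators combine to $(x-2)(x)(x+2)\cdots(x+2m-4)$, which is $[x-2]_m$. Dividing gives the asserted identity. Note that this is a natural companion to Claim \ref{claimV}, which is the corresponding one-step identity in the middle argument $n$ rather than in $x$, and the derivation follows the same telescoping template.

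The proof is essentially a bookkeeping exercise, so the only real difficulty is tracking the step-$2$ indexing of $[\cdot]_n$ (as opposed to the step-$1$ ordinary Pochhammer $(\cdot)_n$ used in Claim \ref{claimT}) and ensuring that the products shift in the correct direction. A minor subtlety is the degenerate range $n-2i\le 0$, where one must invoke the piecewise definition of $[x]_n$ at nonpositive subscripts to verify that the same cancellation persists; this can be checked directly from the defining formulas and introduces no real obstacle.
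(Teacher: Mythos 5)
Your proof is correct and matches the paper's approach: the paper states Claim \ref{claimV2} without a written proof, asserting only that it ``follows directly from the definition of the function $\T$ and $\V$,'' which is precisely the definitional expansion and telescoping you carry out. Your per-factor simplification $\frac{[x+2i]_{n-2i}}{[x-2+2i]_{n-2i}}=\frac{x+2n-2i-2}{x+2i-2}$ is valid in all cases (including $n-2i\le 0$, as you note), and the final regrouping into $[x+2n-2m]_m/[x-2]_m$ is exact.
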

\begin{claim}\label{claimT2}
\begin{equation}\frac{\T(x,n,m)}{\T(x,n-1,m))}=(x+n-m)_{m}.\end{equation}
\end{claim}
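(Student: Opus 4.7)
The plan is to establish the identity by a direct factor-by-factor computation starting from the definition $\T(x,n,m)=\prod_{i=0}^{m-1}(x+i)_{n-2i}$. For each fixed index $i \in \{0,1,\dots,m-1\}$, the factor at position $i$ in the numerator is $(x+i)_{n-2i}$ while the factor at the same position in the denominator is $(x+i)_{n-1-2i}$; these differ by exactly one trailing term. Using the telescoping identity $(a)_k = (a)_{k-1}(a+k-1)$ (which follows immediately from the definition of the Pochhammer symbol), the $i$-th ratio collapses to
\[
\frac{(x+i)_{n-2i}}{(x+i)_{n-1-2i}} = (x+i) + (n-2i) - 1 = x+n-i-1.
\]

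Next, I would multiply these $m$ single-factor ratios over $i=0,1,\dots,m-1$ to obtain
\[
\frac{\T(x,n,m)}{\T(x,n-1,m)} = \prod_{i=0}^{m-1}(x+n-i-1) = (x+n-1)(x+n-2)\cdots (x+n-m).
\]
Reindexing this product in ascending order as $(x+n-m)(x+n-m+1)\cdots(x+n-1)$ identifies it immediately with $(x+n-m)_m$, which is the claimed right-hand side.

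Since the argument is purely formal, there is no real obstacle. The only minor care is in the edge cases: when $m=0$ both sides are $1$ by the empty-product convention, and when an exponent $n-2i$ becomes zero or negative one must check that the paper's extended definition of $(x)_n$ for $n<0$ continues to satisfy the one-step recursion $(a)_k = (a)_{k-1}(a+k-1)$ used above. This is straightforward from the definition given in the preamble: writing $(x)_{-j} = 1/((x-1)(x-2)\cdots(x-j))$ for $j>0$, one checks directly that $(x)_0 = (x)_{-1}(x-1)$ and more generally that the recursion holds at every integer $k$, so the telescoping step is uniformly valid and the claim follows.
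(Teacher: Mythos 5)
Your proof is correct and is exactly the direct verification the paper intends: the paper states Claim \ref{claimT2} as following immediately from the definition of $\T$, and your factor-by-factor telescoping $(x+i)_{n-2i}=(x+i)_{n-1-2i}\,(x+n-i-1)$, followed by the reindexing of $\prod_{i=0}^{m-1}(x+n-1-i)$ as $(x+n-m)_m$, is that computation carried out in full. Your check that the one-step recursion $(a)_k=(a)_{k-1}(a+k-1)$ persists under the paper's extension of $(x)_n$ to $n\le 0$ is a welcome extra care the paper leaves implicit.
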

We now have the first fraction in the first term on the left-hand side of (\ref{recur2c}) simplified as
\begin{align}
\frac{\psi_{x,y-1,z}(a,b)}{\psi_{x,y-1,z-1}(a,b)}&=\frac{\Pn_{z+b,z+b,a}}{\Pn_{z+b-1,z+b-1,a}}\frac{\Pn_{y+z+b-2,y+z+b-2,a}}{\Pn_{y+z+b-1,y+z+b-1,a}}\notag\\
&\times \frac{\V(2a+2b+3,y+z-2,y-1)}{\V(2a+2b+3,y+z-3,y-1)}\frac{\V(2x+2a+2b+3,y+z-3,y-1)}{\V(2x+2a+2b+3,y+z-2,y-1)}\notag\\
&\times \frac{\T(x+b+1,y+z+2a-1,y-1)}{\T(x+b+1,y+z+2a-2,y-1)}\frac{\T(b+1,y+z+2a-2,y-1)}{\T(b+1,y+z+2a-1,y-1)}\notag\\
&\times \frac{\T(z+1,y+b-2,b)}{\T(z,y+b-2,b)} \frac{\T(x+z,y+b-2,b)}{\T(x+z+1,y+b-2,b)} \frac{Q(a,b,x,y+z-1)}{Q(a,b,x,y+z-2)}\\
&=\frac{\Pn_{z+b,z+b,a}}{\Pn_{z+b-1,z+b-1,a}}\notag\\
&\times \frac{y+z+b-1}{y+z+a+b-1}\frac{(y+z+2a+b-1)!}{(2y+2z+2a+2b-3)!}\frac{(2y+2z+2b-3)!}{(y+z+b-1)!}\notag\\
&\times \frac{[2z+2a+2b+1]_{y-1}}{[2x+2z+2a+2b+1]_{y-1}} \frac{(x+z+2a+b+1)_{y-1}}{(z+2a+b+1)_{y-1}}\frac{(y+z-1)_b}{(z)_b}\frac{(x+z)_b}{(x+y+z-1)_b}\notag\\
&\times\frac{(x+y+z+a+b-1)(2x+2y+2z+2a+2b-3)!}{(2y+2z+2b-3)!}\notag\\
&\times\frac{(y+z-2)!(x+y+z+b-2)!(x+y+z+2a+2b-1)!}{(x+y+z-2)!(x+y+z+2a+b-1)!(2x+y+z+2a+2b-1)!}\\
\end{align}
Working the same for the fraction $\frac{\psi_{x,y,z-1}(a,b)}{\psi_{x,y,z}(a,b)}$ and multiplying up the two simplifications, we get
\begin{align}
&\frac{\psi_{x,y-1,z}(a,b)}{\psi_{x,y-1,z-1}(a,b)}\frac{\psi_{x,y,z-1}(a,b)}{\psi_{x,y,z}(a,b)}=\notag\\
&\frac{(y+z+b-1)(y+z+a+b)(2y+2z+2a+2b-2)(2y+2z+2a+2b-1)(y+z+b)}{(y+z+a+b-1)(y+z+b)(y+z+2a+b)(2y+2z+2b-2)(2y+2z+2b-1)}\notag\\
&\times \frac{(2x+2y+2z+2a+2b-1)}{(2y+2z+2a+2b-1)}\frac{(y+z+2a+b)}{(x+y+z+2a+b)}\notag\\
&\times \frac{(y+z-1)}{(y+z+b-1)} \frac{(x+y+z+b-1)}{(x+y+z-1)}\frac{(x+y+z+a+b-1)}{(x+y+z+a+b)}\notag\\
&\times\frac{(2y+2z+2b-2)(2y+2z+2b-1)}{(2x+2y+2z+2a+2b-2)(2x+2y+2z+2a+2b-1)}\notag\\
&\times\frac{(x+y+z-1)(x+y+z+2a+b)(2x+y+z+2a+2b)}{(y+z-1)(x+y+z+b-1)(x+y+z+2a+2b)}\notag\\
&=\frac{(y+z+a+b)(2x+y+z+2a+2b)}{(x+y+z+a+b)(x+y+z+2a+2b)}.
\end{align}

Next, we work on the second term in (\ref{recur2c}). We get the following simplifications:
\begin{align}
&\frac{\psi_{x-1,y,z}(a,b)}{\psi_{x,y,z}(a,b)}=\frac{\V(2x+2a+2b+3,y+z-1,y)}{\V(2x+2a+2b+1,y+z-1,y)}\notag\\
&\times \frac{\T(x+b,y+z+2a,y)}{\T(x+b+1,y+z+2a,y)}\frac{\T(x+2a+b+2,y+b-1,b)}{\T(x+2a+b+1,y+b-1,b)} \frac{\T(x+z+1,y+b-1,b)}{\T(x+z,y+b-1,b)}\notag\\
&\times \frac{Q(a,b,x-1,y+z)}{Q(a,b,x,y+z)}\\
&=\frac{[2x+2z+2a+2b+1]_y}{[2x+2a+2b+1]_y}\frac{(x+b)_y}{(x+z+2a+b+1)_y}\frac{(x+y+2a+b+1)_b}{(x+2a+b+1)_b}\frac{(x+y+z)_b}{(x+z)_b}\notag\\
&\times\frac{(x+a+b)}{(x+y+z+a+b)}\frac{1}{(2x+2y+2z+2a+2b-1)!(2x+2a+2b)!} \frac{(x+y+z-1)!(x+b-1)!}{(x+y+z+b-1)!(x-1)!}\notag\\
&\times\frac{(x+y+z+2a+b)!(x+2a+2b)!(2x+y+z+2a+2b-1)!(2x+y+z+2a+2b)!}{(x+y+z+2a+2b)!(x+2a+b)!}
\end{align}
and
\begin{align}
&\frac{\psi_{x+1,y-1,z-1}(a,b)}{\psi_{x,y-1,z-1}(a,b)}\frac{\psi_{x-1,y,z}(a,b)}{\psi_{x,y,z}(a,b)}\notag\\
&=\frac{(2x+2y+2z+2a+2b-1)(x+b)}{(2x+2a+2b+1)(x+y+z+2a+b)}\frac{(x+2a+2b+1)}{(x+2a+b+1)}\frac{(x+y+z+b-1)}{(x+y+z-1)}\notag\\
&\times\frac{(x+a+b)(x+y+z+a+b-1)}{(x+a+b+1)(x+y+z+a+b)}\frac{(2x+2a+2b+1)(2x+2a+2b+2)}{(2x+2y+2z+2a+2b-2)(2x+2y+2z+2a+2b-1)}\notag\\
&\times\frac{(x+y+z-1)}{(x+b)}\frac{(x)}{(x+y+z+b-1)}\frac{(x+y+z+2a+b)}{(x+2a+2b+1)}\frac{(x+2a+b+1)}{(x+y+z+2a+2b)}\notag\\
&=\frac{x(x+a+b)}{(x+y+z+a+b)(x+y+z+2a+2b)}.
\end{align}
It is easy to see that
\begin{align}\
&\frac{\psi_{x,y-1,z}(a,b)}{\psi_{x,y-1,z-1}(a,b)}\frac{\psi_{x,y,z-1}(a,b)}{\psi_{x,y,z}(a,b)}+\frac{\psi_{x+1,y-1,z-1}(a,b)}{\psi_{x,y-1,z-1}(a,b)}\frac{\psi_{x-1,y,z}(a,b)}{\psi_{x,y,z}(a,b)}=\notag\\
&\frac{(y+z+a+b)(2x+y+z+2a+2b)}{(x+y+z+a+b)(x+y+z+2a+2b)}+\frac{x(x+a+b)}{(x+y+z+a+b)(x+y+z+2a+2b)}=1
\end{align}
and (\ref{recur2c}) follows.

\begin{figure}
  \centering
  \setlength{\unitlength}{3947sp}%
\begingroup\makeatletter\ifx\SetFigFont\undefined%
\gdef\SetFigFont#1#2#3#4#5{%
  \reset@font\fontsize{#1}{#2pt}%
  \fontfamily{#3}\fontseries{#4}\fontshape{#5}%
  \selectfont}%
\fi\endgroup%
\resizebox{10cm}{!}{
\begin{picture}(0,0)%
\includegraphics{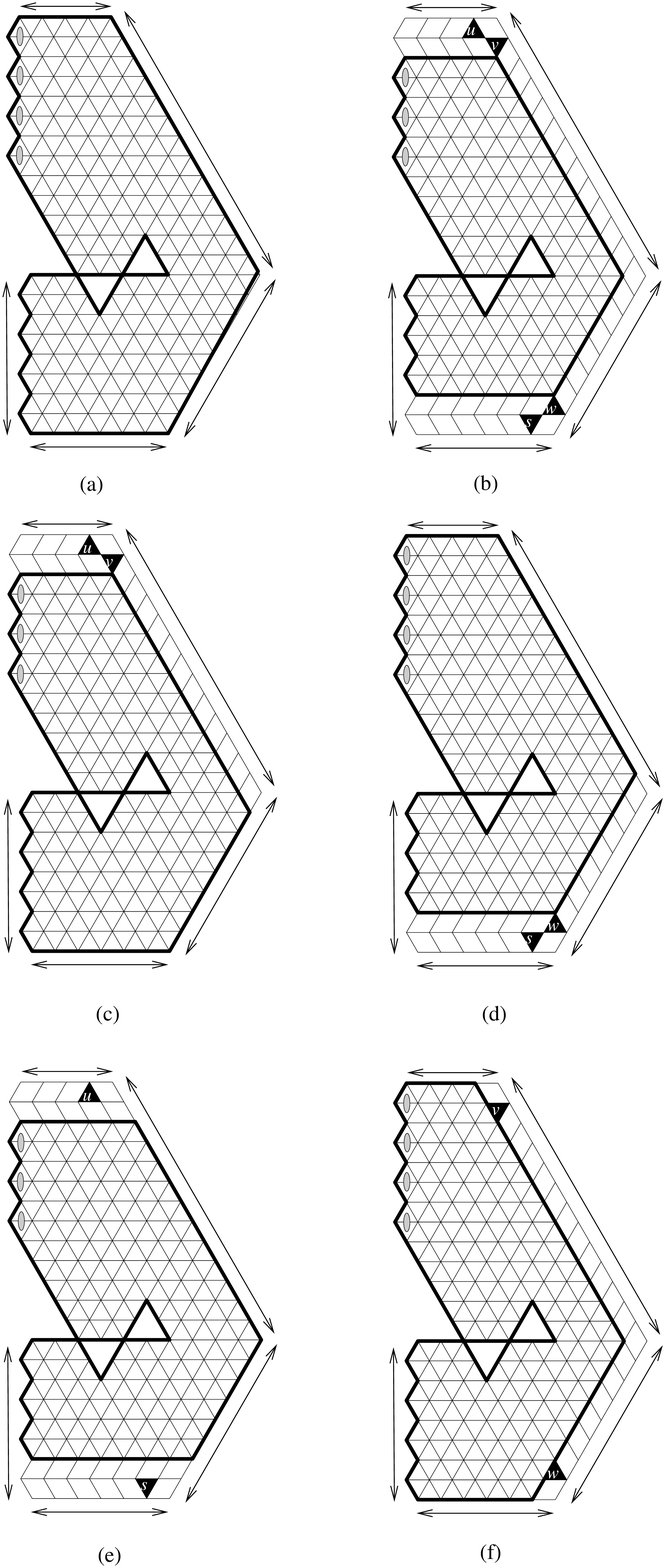}%
\end{picture}%
\begin{picture}(11599,27079)(2537,-26897)
\put(9444,-25256){\rotatebox{90.0}{\makebox(0,0)[lb]{\smash{{\SetFigFont{17}{20.4}{\rmdefault}{\mddefault}{\itdefault}{$2z+2a_2$}%
}}}}}
\put(10514,-26226){\makebox(0,0)[lb]{\smash{{\SetFigFont{17}{20.4}{\rmdefault}{\mddefault}{\itdefault}{$x+a_1+a_3$}%
}}}}
\put(13184,-25336){\rotatebox{60.0}{\makebox(0,0)[lb]{\smash{{\SetFigFont{17}{20.4}{\rmdefault}{\mddefault}{\itdefault}{$y+z+2a_2$}%
}}}}}
\put(12134,-19236){\rotatebox{300.0}{\makebox(0,0)[lb]{\smash{{\SetFigFont{17}{20.4}{\rmdefault}{\mddefault}{\itdefault}{$y+z+2a_1+2a_3+1$}%
}}}}}
\put(10274,-18286){\makebox(0,0)[lb]{\smash{{\SetFigFont{17}{20.4}{\rmdefault}{\mddefault}{\itdefault}{$x+a_2$}%
}}}}
\put(11714,-22936){\makebox(0,0)[lb]{\smash{{\SetFigFont{17}{20.4}{\rmdefault}{\mddefault}{\itdefault}{$a_3$}%
}}}}
\put(10964,-23266){\makebox(0,0)[lb]{\smash{{\SetFigFont{17}{20.4}{\rmdefault}{\mddefault}{\itdefault}{$a_2$}%
}}}}
\put(10084,-22896){\makebox(0,0)[lb]{\smash{{\SetFigFont{17}{20.4}{\rmdefault}{\mddefault}{\itdefault}{$a_1$}%
}}}}
\put(2885,-25236){\rotatebox{90.0}{\makebox(0,0)[lb]{\smash{{\SetFigFont{17}{20.4}{\rmdefault}{\mddefault}{\itdefault}{$2z+2a_2$}%
}}}}}
\put(3955,-26206){\makebox(0,0)[lb]{\smash{{\SetFigFont{17}{20.4}{\rmdefault}{\mddefault}{\itdefault}{$x+a_1+a_3$}%
}}}}
\put(6625,-25316){\rotatebox{60.0}{\makebox(0,0)[lb]{\smash{{\SetFigFont{17}{20.4}{\rmdefault}{\mddefault}{\itdefault}{$y+z+2a_2$}%
}}}}}
\put(5575,-19216){\rotatebox{300.0}{\makebox(0,0)[lb]{\smash{{\SetFigFont{17}{20.4}{\rmdefault}{\mddefault}{\itdefault}{$y+z+2a_1+2a_3+1$}%
}}}}}
\put(3715,-18266){\makebox(0,0)[lb]{\smash{{\SetFigFont{17}{20.4}{\rmdefault}{\mddefault}{\itdefault}{$x+a_2$}%
}}}}
\put(5155,-22916){\makebox(0,0)[lb]{\smash{{\SetFigFont{17}{20.4}{\rmdefault}{\mddefault}{\itdefault}{$a_3$}%
}}}}
\put(4405,-23246){\makebox(0,0)[lb]{\smash{{\SetFigFont{17}{20.4}{\rmdefault}{\mddefault}{\itdefault}{$a_2$}%
}}}}
\put(3525,-22876){\makebox(0,0)[lb]{\smash{{\SetFigFont{17}{20.4}{\rmdefault}{\mddefault}{\itdefault}{$a_1$}%
}}}}
\put(9444,-15941){\rotatebox{90.0}{\makebox(0,0)[lb]{\smash{{\SetFigFont{17}{20.4}{\rmdefault}{\mddefault}{\itdefault}{$2z+2a_2$}%
}}}}}
\put(10514,-16911){\makebox(0,0)[lb]{\smash{{\SetFigFont{17}{20.4}{\rmdefault}{\mddefault}{\itdefault}{$x+a_1+a_3$}%
}}}}
\put(13184,-16021){\rotatebox{60.0}{\makebox(0,0)[lb]{\smash{{\SetFigFont{17}{20.4}{\rmdefault}{\mddefault}{\itdefault}{$y+z+2a_2$}%
}}}}}
\put(12134,-9921){\rotatebox{300.0}{\makebox(0,0)[lb]{\smash{{\SetFigFont{17}{20.4}{\rmdefault}{\mddefault}{\itdefault}{$y+z+2a_1+2a_3+1$}%
}}}}}
\put(10274,-8971){\makebox(0,0)[lb]{\smash{{\SetFigFont{17}{20.4}{\rmdefault}{\mddefault}{\itdefault}{$x+a_2$}%
}}}}
\put(10084,-13581){\makebox(0,0)[lb]{\smash{{\SetFigFont{17}{20.4}{\rmdefault}{\mddefault}{\itdefault}{$a_1$}%
}}}}
\put(2885,-15921){\rotatebox{90.0}{\makebox(0,0)[lb]{\smash{{\SetFigFont{17}{20.4}{\rmdefault}{\mddefault}{\itdefault}{$2z+2a_2$}%
}}}}}
\put(3955,-16891){\makebox(0,0)[lb]{\smash{{\SetFigFont{17}{20.4}{\rmdefault}{\mddefault}{\itdefault}{$x+a_1+a_3$}%
}}}}
\put(6625,-16001){\rotatebox{60.0}{\makebox(0,0)[lb]{\smash{{\SetFigFont{17}{20.4}{\rmdefault}{\mddefault}{\itdefault}{$y+z+2a_2$}%
}}}}}
\put(5575,-9901){\rotatebox{300.0}{\makebox(0,0)[lb]{\smash{{\SetFigFont{17}{20.4}{\rmdefault}{\mddefault}{\itdefault}{$y+z+2a_1+2a_3+1$}%
}}}}}
\put(3715,-8951){\makebox(0,0)[lb]{\smash{{\SetFigFont{17}{20.4}{\rmdefault}{\mddefault}{\itdefault}{$x+a_2$}%
}}}}
\put(5155,-13601){\makebox(0,0)[lb]{\smash{{\SetFigFont{17}{20.4}{\rmdefault}{\mddefault}{\itdefault}{$a_3$}%
}}}}
\put(4405,-13931){\makebox(0,0)[lb]{\smash{{\SetFigFont{17}{20.4}{\rmdefault}{\mddefault}{\itdefault}{$a_2$}%
}}}}
\put(3525,-13561){\makebox(0,0)[lb]{\smash{{\SetFigFont{17}{20.4}{\rmdefault}{\mddefault}{\itdefault}{$a_1$}%
}}}}
\put(9420,-7131){\rotatebox{90.0}{\makebox(0,0)[lb]{\smash{{\SetFigFont{17}{20.4}{\rmdefault}{\mddefault}{\itdefault}{$2z+2a_2$}%
}}}}}
\put(10490,-8101){\makebox(0,0)[lb]{\smash{{\SetFigFont{17}{20.4}{\rmdefault}{\mddefault}{\itdefault}{$x+a_1+a_3$}%
}}}}
\put(13160,-7211){\rotatebox{60.0}{\makebox(0,0)[lb]{\smash{{\SetFigFont{17}{20.4}{\rmdefault}{\mddefault}{\itdefault}{$y+z+2a_2$}%
}}}}}
\put(12110,-1111){\rotatebox{300.0}{\makebox(0,0)[lb]{\smash{{\SetFigFont{17}{20.4}{\rmdefault}{\mddefault}{\itdefault}{$y+z+2a_1+2a_3+1$}%
}}}}}
\put(10250,-161){\makebox(0,0)[lb]{\smash{{\SetFigFont{17}{20.4}{\rmdefault}{\mddefault}{\itdefault}{$x+a_2$}%
}}}}
\put(11690,-4811){\makebox(0,0)[lb]{\smash{{\SetFigFont{17}{20.4}{\rmdefault}{\mddefault}{\itdefault}{$a_3$}%
}}}}
\put(10940,-5141){\makebox(0,0)[lb]{\smash{{\SetFigFont{17}{20.4}{\rmdefault}{\mddefault}{\itdefault}{$a_2$}%
}}}}
\put(10060,-4771){\makebox(0,0)[lb]{\smash{{\SetFigFont{17}{20.4}{\rmdefault}{\mddefault}{\itdefault}{$a_1$}%
}}}}
\put(2861,-7111){\rotatebox{90.0}{\makebox(0,0)[lb]{\smash{{\SetFigFont{17}{20.4}{\rmdefault}{\mddefault}{\itdefault}{$2z+2a_2$}%
}}}}}
\put(3931,-8081){\makebox(0,0)[lb]{\smash{{\SetFigFont{17}{20.4}{\rmdefault}{\mddefault}{\itdefault}{$x+a_1+a_3$}%
}}}}
\put(6601,-7191){\rotatebox{60.0}{\makebox(0,0)[lb]{\smash{{\SetFigFont{17}{20.4}{\rmdefault}{\mddefault}{\itdefault}{$y+z+2a_2$}%
}}}}}
\put(5551,-1091){\rotatebox{300.0}{\makebox(0,0)[lb]{\smash{{\SetFigFont{17}{20.4}{\rmdefault}{\mddefault}{\itdefault}{$y+z+2a_1+2a_3+1$}%
}}}}}
\put(3691,-141){\makebox(0,0)[lb]{\smash{{\SetFigFont{17}{20.4}{\rmdefault}{\mddefault}{\itdefault}{$x+a_2$}%
}}}}
\put(5131,-4791){\makebox(0,0)[lb]{\smash{{\SetFigFont{17}{20.4}{\rmdefault}{\mddefault}{\itdefault}{$a_3$}%
}}}}
\put(4381,-5121){\makebox(0,0)[lb]{\smash{{\SetFigFont{17}{20.4}{\rmdefault}{\mddefault}{\itdefault}{$a_2$}%
}}}}
\put(3501,-4751){\makebox(0,0)[lb]{\smash{{\SetFigFont{17}{20.4}{\rmdefault}{\mddefault}{\itdefault}{$a_1$}%
}}}}
\put(11714,-13621){\makebox(0,0)[lb]{\smash{{\SetFigFont{17}{20.4}{\rmdefault}{\mddefault}{\itdefault}{$a_3$}%
}}}}
\put(10964,-13951){\makebox(0,0)[lb]{\smash{{\SetFigFont{17}{20.4}{\rmdefault}{\mddefault}{\itdefault}{$a_2$}%
}}}}
\end{picture}}
  \caption{Applying Kuo condensation to a $H^{(5)}$-type region.}\label{Kuomidhole3}
\end{figure}

We also prove (\ref{main5eq2}) by induction on $x+y+z$. The base cases are also the situations when at least one of the three parameters $x,y,z$ is equal to $0$. This cases follows from the equations when proving the base cases for (\ref{main5eq1}).

The induction step here is exactly the same as that in the proof of  (\ref{main5eq1}). We only need to show that the formula on the right-hand side of (\ref{main5eq2}) also  satisfies the recurrence (\ref{recur2}).
Similar to the proof of Theorem \ref{main1}, this verification follows from (\ref{recur2c}).
\end{proof}

Theorems \ref{main6}--\ref{main8} can be treated similarly to Theorem \ref{main5} in a completely analogous manner, and we also omit the proofs of these theorems.

We finally prove Theorem \ref{main9}.

\begin{figure}\centering
\setlength{\unitlength}{3947sp}%
\begingroup\makeatletter\ifx\SetFigFont\undefined%
\gdef\SetFigFont#1#2#3#4#5{%
  \reset@font\fontsize{#1}{#2pt}%
  \fontfamily{#3}\fontseries{#4}\fontshape{#5}%
  \selectfont}%
\fi\endgroup%
\resizebox{13cm}{!}{
\begin{picture}(0,0)%
\includegraphics{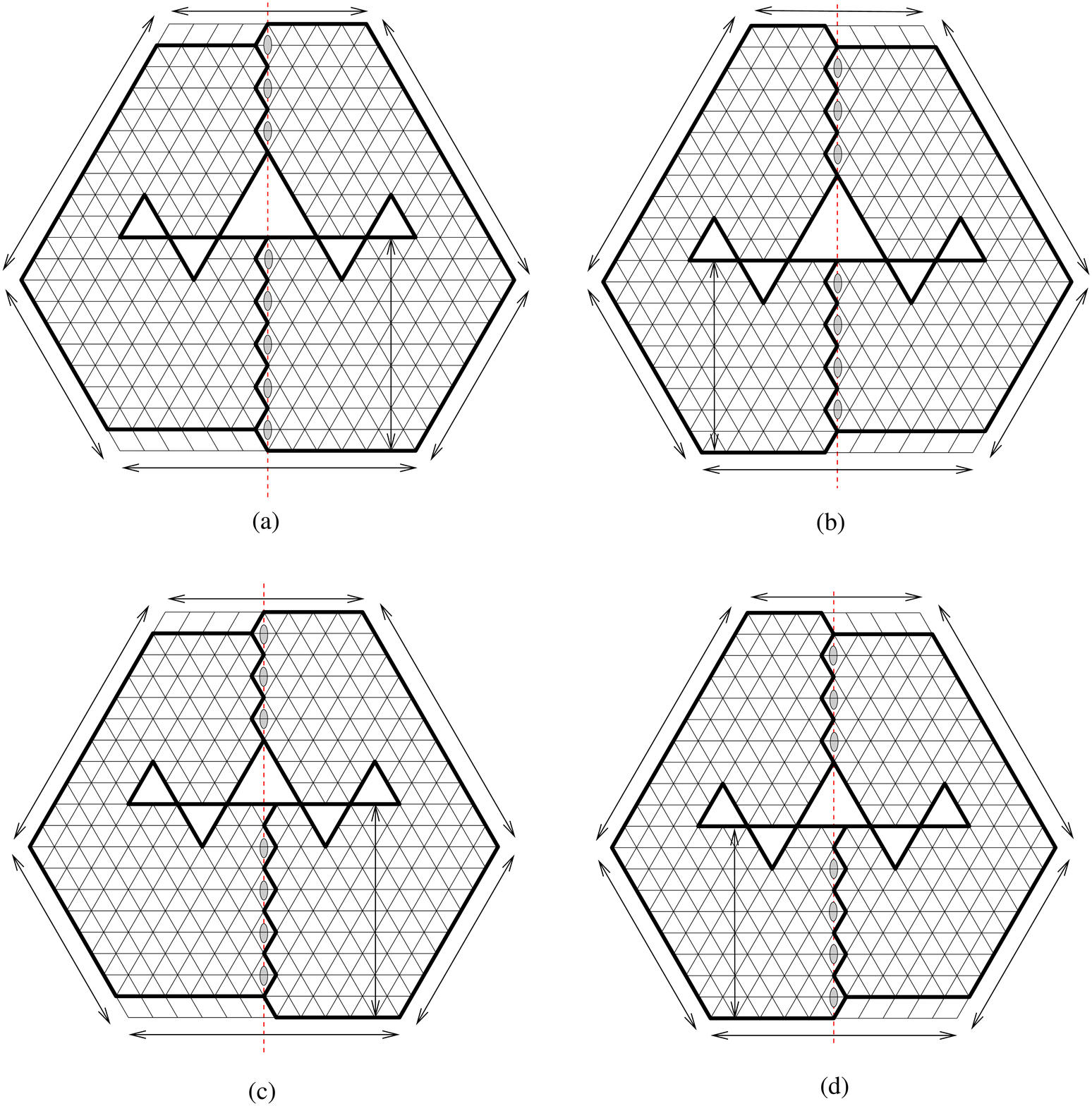}%
\end{picture}%
\begin{picture}(17250,17855)(1476,-17997)
\put(5545,-3942){\makebox(0,0)[lb]{\smash{{\SetFigFont{20}{24.0}{\rmdefault}{\mddefault}{\itdefault}{$a_1$}%
}}}}
\put(6700,-4497){\makebox(0,0)[lb]{\smash{{\SetFigFont{20}{24.0}{\rmdefault}{\mddefault}{\itdefault}{$a_2$}%
}}}}
\put(7480,-4122){\makebox(0,0)[lb]{\smash{{\SetFigFont{20}{24.0}{\rmdefault}{\mddefault}{\itdefault}{$a_3$}%
}}}}
\put(4383,-4512){\makebox(0,0)[lb]{\smash{{\SetFigFont{20}{24.0}{\rmdefault}{\mddefault}{\itdefault}{$a_2$}%
}}}}
\put(3580,-4114){\makebox(0,0)[lb]{\smash{{\SetFigFont{20}{24.0}{\rmdefault}{\mddefault}{\itdefault}{$a_3$}%
}}}}
\put(5111,-501){\makebox(0,0)[lb]{\smash{{\SetFigFont{20}{24.0}{\rmdefault}{\mddefault}{\itdefault}{$x+2a_2$}%
}}}}
\put(4831,-8231){\makebox(0,0)[lb]{\smash{{\SetFigFont{20}{24.0}{\rmdefault}{\mddefault}{\itdefault}{$x+a_1+2a_3$}%
}}}}
\put(8241,-1651){\rotatebox{300.0}{\makebox(0,0)[lb]{\smash{{\SetFigFont{20}{24.0}{\rmdefault}{\mddefault}{\itdefault}{$y+a_1+2a_3$}%
}}}}}
\put(8981,-7201){\rotatebox{60.0}{\makebox(0,0)[lb]{\smash{{\SetFigFont{20}{24.0}{\rmdefault}{\mddefault}{\itdefault}{$y+2a_2$}%
}}}}}
\put(1581,-5881){\rotatebox{300.0}{\makebox(0,0)[lb]{\smash{{\SetFigFont{20}{24.0}{\rmdefault}{\mddefault}{\itdefault}{$y+2a_2$}%
}}}}}
\put(2131,-3361){\rotatebox{60.0}{\makebox(0,0)[lb]{\smash{{\SetFigFont{20}{24.0}{\rmdefault}{\mddefault}{\itdefault}{$y+a_1+2a_3$}%
}}}}}
\put(7731,-5851){\makebox(0,0)[lb]{\smash{{\SetFigFont{20}{24.0}{\rmdefault}{\mddefault}{\itdefault}{$z$}%
}}}}
\put(12480,-5912){\makebox(0,0)[lb]{\smash{{\SetFigFont{20}{24.0}{\rmdefault}{\mddefault}{\itdefault}{$z$}%
}}}}
\put(14585,-4332){\makebox(0,0)[lb]{\smash{{\SetFigFont{20}{24.0}{\rmdefault}{\mddefault}{\itdefault}{$a_1$}%
}}}}
\put(13405,-4892){\makebox(0,0)[lb]{\smash{{\SetFigFont{20}{24.0}{\rmdefault}{\mddefault}{\itdefault}{$a_2$}%
}}}}
\put(15725,-4902){\makebox(0,0)[lb]{\smash{{\SetFigFont{20}{24.0}{\rmdefault}{\mddefault}{\itdefault}{$a_2$}%
}}}}
\put(12575,-4492){\makebox(0,0)[lb]{\smash{{\SetFigFont{20}{24.0}{\rmdefault}{\mddefault}{\itdefault}{$a_3$}%
}}}}
\put(16495,-4492){\makebox(0,0)[lb]{\smash{{\SetFigFont{20}{24.0}{\rmdefault}{\mddefault}{\itdefault}{$a_3$}%
}}}}
\put(14205,-532){\makebox(0,0)[lb]{\smash{{\SetFigFont{20}{24.0}{\rmdefault}{\mddefault}{\itdefault}{$x+2a_2$}%
}}}}
\put(17068,-1751){\rotatebox{300.0}{\makebox(0,0)[lb]{\smash{{\SetFigFont{20}{24.0}{\rmdefault}{\mddefault}{\itdefault}{$y+a_1+2a_3$}%
}}}}}
\put(11332,-3539){\rotatebox{60.0}{\makebox(0,0)[lb]{\smash{{\SetFigFont{20}{24.0}{\rmdefault}{\mddefault}{\itdefault}{$y+a_1+2a_3$}%
}}}}}
\put(10916,-5950){\rotatebox{300.0}{\makebox(0,0)[lb]{\smash{{\SetFigFont{20}{24.0}{\rmdefault}{\mddefault}{\itdefault}{$y+2a_2$}%
}}}}}
\put(17820,-7089){\rotatebox{60.0}{\makebox(0,0)[lb]{\smash{{\SetFigFont{20}{24.0}{\rmdefault}{\mddefault}{\itdefault}{$y+2a_2$}%
}}}}}
\put(13800,-8242){\makebox(0,0)[lb]{\smash{{\SetFigFont{20}{24.0}{\rmdefault}{\mddefault}{\itdefault}{$x+a_1+2a_3$}%
}}}}
\put(13688,-17199){\makebox(0,0)[lb]{\smash{{\SetFigFont{20}{24.0}{\rmdefault}{\mddefault}{\itdefault}{$x+a_1+2a_3$}%
}}}}
\put(4556,-17230){\makebox(0,0)[lb]{\smash{{\SetFigFont{20}{24.0}{\rmdefault}{\mddefault}{\itdefault}{$x+a_1+2a_3$}%
}}}}
\put(8750,-16106){\rotatebox{60.0}{\makebox(0,0)[lb]{\smash{{\SetFigFont{20}{24.0}{\rmdefault}{\mddefault}{\itdefault}{$y+2a_2$}%
}}}}}
\put(17578,-16116){\rotatebox{60.0}{\makebox(0,0)[lb]{\smash{{\SetFigFont{20}{24.0}{\rmdefault}{\mddefault}{\itdefault}{$y+2a_2$}%
}}}}}
\put(11360,-12676){\rotatebox{60.0}{\makebox(0,0)[lb]{\smash{{\SetFigFont{20}{24.0}{\rmdefault}{\mddefault}{\itdefault}{$y+a_1+2a_3$}%
}}}}}
\put(2072,-12746){\rotatebox{60.0}{\makebox(0,0)[lb]{\smash{{\SetFigFont{20}{24.0}{\rmdefault}{\mddefault}{\itdefault}{$y+a_1+2a_3$}%
}}}}}
\put(8098,-10708){\rotatebox{300.0}{\makebox(0,0)[lb]{\smash{{\SetFigFont{20}{24.0}{\rmdefault}{\mddefault}{\itdefault}{$y+a_1+2a_3$}%
}}}}}
\put(1726,-14867){\rotatebox{300.0}{\makebox(0,0)[lb]{\smash{{\SetFigFont{20}{24.0}{\rmdefault}{\mddefault}{\itdefault}{$y+2a+2$}%
}}}}}
\put(11154,-15097){\rotatebox{300.0}{\makebox(0,0)[lb]{\smash{{\SetFigFont{20}{24.0}{\rmdefault}{\mddefault}{\itdefault}{$y+2a+2$}%
}}}}}
\put(16906,-10838){\rotatebox{300.0}{\makebox(0,0)[lb]{\smash{{\SetFigFont{20}{24.0}{\rmdefault}{\mddefault}{\itdefault}{$y+a_1+2a_3$}%
}}}}}
\put(14053,-9729){\makebox(0,0)[lb]{\smash{{\SetFigFont{20}{24.0}{\rmdefault}{\mddefault}{\itdefault}{$x+2a_2$}%
}}}}
\put(5001,-9805){\makebox(0,0)[lb]{\smash{{\SetFigFont{20}{24.0}{\rmdefault}{\mddefault}{\itdefault}{$x+2a_2$}%
}}}}
\put(12828,-15179){\makebox(0,0)[lb]{\smash{{\SetFigFont{20}{24.0}{\rmdefault}{\mddefault}{\itdefault}{$z$}%
}}}}
\put(7520,-15159){\makebox(0,0)[lb]{\smash{{\SetFigFont{20}{24.0}{\rmdefault}{\mddefault}{\itdefault}{$z$}%
}}}}
\put(14493,-13414){\makebox(0,0)[lb]{\smash{{\SetFigFont{20}{24.0}{\rmdefault}{\mddefault}{\itdefault}{$a_1$}%
}}}}
\put(5485,-13044){\makebox(0,0)[lb]{\smash{{\SetFigFont{20}{24.0}{\rmdefault}{\mddefault}{\itdefault}{$a_1$}%
}}}}
\put(4535,-13499){\makebox(0,0)[lb]{\smash{{\SetFigFont{20}{24.0}{\rmdefault}{\mddefault}{\itdefault}{$a_2$}%
}}}}
\put(6495,-13499){\makebox(0,0)[lb]{\smash{{\SetFigFont{20}{24.0}{\rmdefault}{\mddefault}{\itdefault}{$a_2$}%
}}}}
\put(13563,-13849){\makebox(0,0)[lb]{\smash{{\SetFigFont{20}{24.0}{\rmdefault}{\mddefault}{\itdefault}{$a_2$}%
}}}}
\put(15483,-13829){\makebox(0,0)[lb]{\smash{{\SetFigFont{20}{24.0}{\rmdefault}{\mddefault}{\itdefault}{$a_2$}%
}}}}
\put(3715,-13079){\makebox(0,0)[lb]{\smash{{\SetFigFont{20}{24.0}{\rmdefault}{\mddefault}{\itdefault}{$a_3$}%
}}}}
\put(7225,-13069){\makebox(0,0)[lb]{\smash{{\SetFigFont{20}{24.0}{\rmdefault}{\mddefault}{\itdefault}{$a_3$}%
}}}}
\put(12753,-13439){\makebox(0,0)[lb]{\smash{{\SetFigFont{20}{24.0}{\rmdefault}{\mddefault}{\itdefault}{$a_3$}%
}}}}
\put(16243,-13439){\makebox(0,0)[lb]{\smash{{\SetFigFont{20}{24.0}{\rmdefault}{\mddefault}{\itdefault}{$a_3$}%
}}}}
\end{picture}}
\caption{Apply Ciucu's factorization theorem to a symmetric hexagon with an array of holes on the symmetric axis.}\label{middlearrayn}
\end{figure}

\begin{proof}[Proof of Theorem \ref{main9}]
Apply Ciucu's Factorization Theorem \ref{ciucuthm} to the dual graph $G$ of $\mathcal{S}=\mathcal{S}_{x,y,z}(\textbf{a})$, we obtain
\begin{equation}\label{main9eq1}
\M(\mathcal{S})=\M(G)=2^{y+a_2+\dotsc+a_n}\M(G^+)\M(G^-).
\end{equation}
We consider first the case $x$ and $a_1$ are both even. Note that we also have $z$ even in this case ($z$ and $x$ always have the same parity). It turns out that $G^-$ is the dual graph of the region
\[H^{(3)}_{\frac{x}{2}+\E(\textbf{a}),y-\frac{z}{2}+\E(\textbf{a}) ,\frac{z}{2}-\E(\textbf{a})}\left(\frac{a_1}{2},a_2,\dotsc,a_n\right)\]
(see Figure \ref{middlearrayn}(a)). Thus,
\begin{equation}\label{main9eq2}
\M(G^-)=H^{(3)}_{\frac{x}{2}+\E(\textbf{a}),y-\frac{z}{2}+\E(\textbf{a}) ,\frac{z}{2}-\E(\textbf{a})}\left(\frac{a_1}{2},a_2,\dotsc,a_n\right).
\end{equation}
 The graph $G^+$ after removed several forced edges (the edges corresponding to the forced lozenges in Figure \ref{middlearrayn}(a))  is the dual graph of the region
\[H^{(2)}_{\frac{x}{2}+\E(\textbf{a}),y-\frac{z}{2}+\E(\textbf{a}) ,\frac{z}{2}-\E(\textbf{a})}\left(\frac{a_1}{2},a_2,\dotsc,a_n\right).\]
Since the removal of these forced edges does not affect to the number of matchings, we have 
\begin{equation}\label{main9eq3}
\M(G^+)=H^{(2)}_{\frac{x}{2}+\E(\textbf{a}),y-\frac{z}{2}+\E(\textbf{a}) ,\frac{z}{2}-\E(\textbf{a})}\left(\frac{a_1}{2},a_2,\dotsc,a_n\right).
\end{equation}
Then part (1) follows from (\ref{main9eq1})--(\ref{main9eq3}).

Parts (2), (3), and (4) can be similarly treated by using Ciucu's Factorization Theorem, based on Figures \ref{middlearrayn}(b), (c), and (d), respectively.
\end{proof}

\begin{rmk}\label{remark1}
If the parameter $z$ in the region $\mathcal{S}=\mathcal{S}_{x,y,z}(\textbf{a})$ is too small or too large, then the region does not have a tiling.  We can still apply Ciucu's Factorization Theorem to the dual graph $G$ of the region $\mathcal{S}$ for any $z$, and still get the equality (\ref{main9eq1}). However,  if $z<2\E(\textbf{a})-1$ or $z>2y+2\E(\textbf{a})+1$, then at least one of the component graphs $G^+$ and $G^-$ has no perfect matching. To see this, we consider the regions corresponding to them. Let us work in detail here for  the case of even $x$ and $a_1$ (the other cases can be treated in the same manner). If $z<2\E(\textbf{a})-1$, then the region corresponding to $G^-$ is still an $H^{(3)}$-type region, denoted by $R$, however, it has a negative $y$-parameter. In this case, divide the region into two subregions, denoted $R_1$ and $R_2$ for the upper and lower ones, along the line containing the bases of the triangular holes. By the same arguments in the proof of the Region-Splitting Lemma \ref{RS}, we can show that a tiling of $R$ (if exists) can be written as union of two independent tilings of $R_1$ and $R_2$. However, it is easy to see that $R_2$ has no tiling (since it is not balanced). Similar in the case of $z>2y+2\E(\textbf{a})+1$, the upper subregion $R_1$ has no tiling. In summary, $R$ has no tiling, so does $\mathcal{S}$.
\end{rmk}



\end{document}